\DeclareSymbolFont{msbm}{U}{msb}{m}{n}
\DeclareMathSymbol{\N}{\mathalpha}{msbm}{'116}
\DeclareMathSymbol{\R}{\mathalpha}{msbm}{'122}
\DeclareMathSymbol{\Q}{\mathalpha}{msbm}{'121}
\def\Om{\Omega}
\def\Omclo{\overline{\Omega}}
\def\pdi{\partial_i}
\def\pdj{\partial_j}
\def\pdk{\partial_k}
\def\limn{\underset{n \to \infty}{\text{lim}}}
\def\sup{\text{sup}}
\def\supp{\text{supp}}
\def\dist{\text{dist}}
\newcommand{\sharpbracket}[1]{\langle #1 \rangle}
\newcommand{\Mu}[1][u]{M^{[#1]}}
\newcommand{\Muts}[2]{{M_{#2}^{[#1]}}^2}
\newcommand{\Nu}[1][u]{N^{[#1]}}
\def\mN{\mathbf N}
\newcommand{\subE}{\widetilde{E}}
\newcommand{\mMRes}[1][\subE]{\mathbf M^{#1}}
\newcommand{\mOmRes}[1][\subE]{\widetilde{\mathbf \Om}}
\newcommand{\mFRes}[1][\subE]{\widetilde{\mathcal F}}
\newcommand{\mFtRes}[2][\subE]{\widetilde{\mathcal F}_{#2}}
\newcommand{\mXtRes}[2][\subE]{\mathbf X_{#2}^{#1}}
\newcommand{\mPxRes}[2][\subE]{\mathbb P_{#2}^{#1}}
\def\mFResInftCompl{\widetilde{\mathcal F}_\infty}
\def\mOm{\mathbf \Om}
\def\pairpot{\Psi}
\newcommand{\Aij}[2]{A^{(#1,#2)}}
\newcommand{\xib}[1]{x^{(#1)}}
\def\funcA{A}
\def\GammaN{\Gamma^{(N)}}
\def\cutoff{\phi}
\def\unitnormal{n}
\def\ellip{\gamma}
\def\lifetime{\mathcal X}
\def\Id{\mathbf{1}}
\def\capE{\text{cap}_\mathcal E}
\def\shiftop{\theta}
\def\Ltil{\hat{L}}
\def\minw{\wedge}
\def\maxw{\vee}
\def\DNeu{\mathcal D_{\text{Neu}}}
\def\ttau{\widetilde{\tau}}
\def\Ito{It\^{o}}
\def\conditionname{Condition}
\newtheorem{condition}{\conditionname}[section]
\newcommand*{\affaddr}[1]{#1} 
\newcommand*{\affmark}[1][*]{\textsuperscript{#1}}
\title{Skorokhod decomposition for a reflected $\mathcal L^p$-strong Feller diffusion with singular drift}
\begin{document}
\author{ Benedict Baur \affmark[1,a], Martin Grothaus \affmark[1,b] \\
\affaddr{\affmark[1] Department of Mathematics, University of Kaiserslautern, P.O. Box 3049, 67653 Kaiserslautern}\\
\thanks{\affmark[a] benedictbaur@gmail.com (Corresponding author)}
\thanks{\affmark[b] grothaus@mathematik.uni-kl.de}
}
\received{received October 2016}

%
%
%
%

\maketitle

\begin{abstract}

We construct Skorokhod decompositions for diffusions with singular drift and reflecting boundary behavior on open subsets of $\R^d$ with $C^2$-smooth boundary except for a sufficiently small set. 
This decomposition holds almost surely under the path measures of the process for every starting point from an explicitly known set. This set is characterized by the boundary smoothness and the singularities of the drift term.

We apply modern methods of Dirichlet form theory and $\mathcal L^p$-strong Feller processes. These tools have been approved as useful for the pointwise analysis of stochastic processes with singular drift and various boundary conditions. Furthermore, we apply Sobolev space theorems and elliptic regularity results to prove regularity properties of potentials related to surface measures. These are important ingredients for the pointwise construction of the boundary local time of the diffusions under consideration. 

As an application we construct stochastic dynamics for particle systems with hydrodynamic and pair interaction. Our approach allows highly singular potentials like Lennard-Jones potentials and position-dependent diffusion coefficients and thus the treatment of physically reasonable models.
\end{abstract}

\classcode{Primary 60J55 ,  31C25; Secondary 60J60 , 82C22}
\keywords{Skorokhod decomposition, Reflecting boundary behavior, Dirichlet forms, Diffusion processes, Local time and additive functionals, Interacting particle systems}

%


%
\section{Introduction}

In this article we construct Skorokhod decompositions for diffusion processes with variable diffusion coefficients, singular - possibly discontinuous -  drifts and reflecting boundary behavior. The construction is based on the $\mathcal L^p$-strong Feller diffusions we constructed in \cite{BG13}. These diffusions are associated with gradient Dirichlet forms and were constructed using methods of \cite{BGS13} and elliptic regularity results.
We emphasize that we construct the Skorokhod decomposition for every starting point in an explicitly specified set $E_1$ rather than just for quasi-every starting point. This overcomes a common drawback of Dirichlet form methods, namely that one has at first only statements for quasi-every starting point. The set $E_1$ is naturally related with the drift coefficients and boundary smoothness, see Theorem \ref{TheoDiffProcess} below. In applications this set can often be explicitly identified. 
We furthermore identify the constructed processes as weak solutions to singular stochastic differential equations (SDE).
All results hold under the same assumptions as in \cite{BG13}. These assumptions allow highly singular drift.

Let us now describe how we analyze the boundary behavior of the processes: The constructed processes in \cite{BG13} solve the martingale problem for a class of functions containing $C^2$-smooth function with homogenous Neumann-type boundary condition, see Theorem \ref{TheoDiffProcess}, below. This gives only limited information on the boundary behavior.
To see the reflecting boundary behavior, we need to enlarge the class of functions for which we get the martingale solution property. For this we construct the local time at a later to be specified boundary part. 

The local time is constructed as an additive functional of the process $(\mathbf X_t)_{t \ge 0}$. For the pointwise analysis it is essential to construct the local time as a functional that behaves well for every starting point in $E_1$.  We apply a construction result for additive functionals of Fukushima, Oshima and Takeda (\cite{FOT11}), see Theorem \ref{TheoremConstrTildeS} below. 

The construction of the local time is based on boundedness properties of $\alpha$-potentials of surface measures at compact boundary parts. We prove these properties by identifying the potentials as weak solutions to elliptic partial differential equations and by using our elliptic regularity result obtained in \cite{BG13}, see Theorem \ref{TheoFiniteEnergyIntegral} below. This is a new approach for applying \cite[Theo.~5.1.6]{FOT11}. In previous works these boundedness properties were checked using estimates on the resolvent kernels, see \cite{FT95}, \cite{FT96} and the explainations below. In the case of purely local assumptions on the coefficients it is more natural to take the approach via elliptic regularity results and Sobolev space theorems. As a byproduct we get even H\"older continuity and Sobolev space regularity of the potentials.

Using the local time we show a semimartingale decomposition of $(u(\mathbf X_t))_{t \ge 0}$ for $u \in C^2_c(E_1)$. This decomposition is valid under the path measure $\mathbb P_x$ for all $x \in E_1$, the set of admissible starting points, see Theorem \ref{TheoMartingaleWithLocalTime} and \eqref{EqSkoro} below. By localization techniques we obtain a Skorokhod decomposition (or semimartingale decomposition) for the process itself and can identify it as a weak solution to an SDE with reflection at the boundary, see Theorem \ref{TheoLocalMartingalePx} and Theorem \ref{TheoExWeakSolution} below.

In the pointwise setting additional care has to be taken due to possible singularities of the drifts. To handle these singularities, the $\mathcal L^p$-strong Feller property of the resolvent is important, see e.g. Theorem \ref{TheoremLpIntegraldt} below. Furthermore, we would like to note that it is a common principle to transform "`almost-everywhere"' or "`quasi-everywhere"' statements to everywhere (on $E_1$) statements by making use of the absolute continuity of the transition semigroup of kernels of a given stochastic process. However, this gives in general statements for strict positive time $t > 0$ only. To get a complete statement for $t \in [0,\infty)$ one has to do additional effort, e.g. the pointwise construction of the local time.

In Section \ref{SecIPS} we apply these results to interesting models of Mathematical Physics. Since our approach allows variable diffusion coefficients and strongly singular drifts we can handle physically reasonable settings. We construct stochastic dynamics for particle systems with hydrodynamic and direct pair interaction. Our results allow the treatment of potentials with strong repulsive singularities like potentials of Lennard-Jones type.
Particle systems with hydrodynamic interaction are a typical example for physical models with multiplicative noise, i.e., the diffusion matrix depends on the position of the particles.

Let us now introduce precisely our setting and assumptions. We then state the main results of this article. First we recall the results of \cite{BG13} on which our work is based.

We fix a $d \in \N$ with $d \ge 2$. Let $\Om \subset \R^d$ open and $A : \Omclo \to \R^{d \times d}$ a measurable mapping of symmetric elliptic matrices.
Let $\varrho : \Omclo \to \R^+_0$ be measurable with $\varrho > 0$ $dx$-a.e.

We consider the pre-Dirichlet form
\begin{equation}
\mathcal E(u,v) = \int_\Om (A \nabla u, \nabla v) \, d \mu, 
u,v \in \mathcal D := \{ u \in C_c(\Omclo) \, | \, u \in H^{1,1}_{\text{loc}}(\Om), \mathcal E(u,u) < \infty \}, \label{EqGradientDirForm}
\end{equation}
in the Hilbert space $L^2(\Omclo,\mu)$, where $\mu := \varrho dx$, $dx$ the Lebesgue measure on $\R^d$. As usual $L^p(\Omclo,\mu)$, $1 \le p < \infty$ ($p=\infty$), denotes the space of equivalence classes of $p$-integrable (essentially bounded) functions. By $C(\Omclo)$ we denote the space of continuous functions on $\Omclo$, the subindex $c$ marks that the functions have compact support in $\Omclo$. $H^{m,p}(\Om)$, $m \in \N_0$, $1 \le p \le \infty$ denotes the Sobolev space of $m$-times weakly differentiable functions with $L^p(\Om,dx)$ regularity. The subindex loc marks that the integrability is assumed to be local. As scalar product $(\cdot, \cdot)$ we take the euclidean scalar product on $\R^d$.

\begin{condition} \label{Cond1Matrix}
For each $x \in \Omclo$ the matrix $A(x)$ is symmetric and strictly elliptic, i.e., there exists an $\ellip(x) > 0$ such that
\begin{align*}
\gamma(x) (\xi,\xi) \le (A(x) \xi, \xi) \quad \text{for all} \ \xi \in \R^d.
\end{align*}
\end{condition}
\begin{condition} \label{CondContinuity}
It holds $A \in C(\Omclo;\R^{d \times d})$, $\varrho \in C(\Omclo)$ and $\varrho > 0$, $d.x.$-a.e.
\end{condition}

From \cite[Theo.~1.5]{BG13} we get:
\begin{theorem} \label{TheoIntroClosable}
Assume Conditions \ref{Cond1Matrix} and \ref{CondContinuity}. Then the form $(\mathcal E,\mathcal D)$ is closable with closure denoted by $(\mathcal E,D(\mathcal E))$. The closure is a strongly local, regular Dirichlet form.
\end{theorem}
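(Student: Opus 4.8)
The plan is to verify the three assertions — closability, strong locality, regularity — in that order, following the standard template for gradient-type Dirichlet forms but paying attention to the fact that $A$ and $\varrho$ are only continuous (not smooth) and that $\varrho$ may vanish on a Lebesgue-null set.

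\emph{Closability.} First I would check that $(\mathcal E,\mathcal D)$ is symmetric, nonnegative and densely defined on $L^2(\Omclo,\mu)$; density follows because $C_c^\infty(\Om) \subset \mathcal D$ and is dense in $L^2(\Omclo,\mu)$, using $\mu = \varrho\,dx$ and $\varrho$ continuous with $\varrho>0$ $dx$-a.e. For closability it suffices to show: whenever $(u_n) \subset \mathcal D$ with $u_n \to 0$ in $L^2(\Omclo,\mu)$ and $\mathcal E(u_n-u_m,u_n-u_m)\to 0$, then $\mathcal E(u_n,u_n)\to 0$. The key point is that on every relatively compact open $V\Subset\Om$ the ellipticity bound from Condition~\ref{Cond1Matrix} together with $\varrho$ being bounded below by a positive constant on $\overline V$ (continuity plus a covering argument avoiding the null set where $\varrho$ could only be small, but on a compact subset of $\Om$ continuity of $\varrho>0$ $dx$-a.e. forces $\inf_{\overline V}\varrho>0$ once $V$ is chosen suitably — one reduces to points where $\varrho$ is continuous and positive) shows that the Cauchy property of $\sqrt{A}\,\nabla u_n$ in $L^2(V,dx)^d$ forces $\nabla u_n$ to be $L^2$-Cauchy on $V$, hence convergent to some $g\in L^2(V,dx)^d$; but $u_n\to 0$ in $L^1_{\mathrm{loc}}$, so distributional uniqueness of the gradient gives $g=0$ on $V$. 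Exhausting $\Om$ by such $V$ and using Fatou/monotone convergence on $\mathcal E(u_n,u_n)=\int_\Om(A\nabla u_n,\nabla u_n)\,d\mu$ yields $\mathcal E(u_n,u_n)\to 0$. This is the genuinely new ingredient and the step I expect to require the most care: handling the possible degeneracy of $\varrho$ and the mere continuity of $A$. (Of course this is exactly the content imported from \cite[Theo.~1.5]{BG13}, so in the paper one simply cites it.)

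\emph{Strong locality.} With the closure $(\mathcal E,D(\mathcal E))$ in hand, strong locality means $\mathcal E(u,v)=0$ whenever $u,v\in D(\mathcal E)$ have compact support and $v$ is constant on a neighborhood of $\supp u$. On $\mathcal D$ this is immediate: if $v$ is constant near $\supp u$ then $\nabla v=0$ there, so the integrand $(A\nabla u,\nabla v)$ vanishes $\mu$-a.e. For general elements of the closure one approximates in the $\mathcal E_1$-norm and checks that the support/locality condition is stable enough under the approximation; alternatively, and more cleanly, strong locality passes to the closure automatically for forms of this gradient type because the carré-du-champ is $(A\nabla u,\nabla u)$, which is a strongly local energy measure. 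I would invoke the standard criterion (e.g. in Fukushima--Oshima--Takeda) that a closable form whose generating bilinear expression has the diffusion/derivation property extends to a strongly local Dirichlet form.

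\emph{Regularity and the Markovian property.} Markovianity of $\mathcal E$ on $\mathcal D$ is checked by the unit-contraction property: for $u\in\mathcal D$ one has $v:=(0\vee u)\wedge 1\in C_c(\Omclo)\cap H^{1,1}_{\mathrm{loc}}(\Om)$ with $\nabla v = \Id_{\{0<u<1\}}\nabla u$, so $(A\nabla v,\nabla v)\le(A\nabla u,\nabla u)$ pointwise $\mu$-a.e. by ellipticity and symmetry of $A$, hence $\mathcal E(v,v)\le\mathcal E(u,u)$ and $v\in\mathcal D$; Markovianity then transfers to the closure by a standard approximation argument. For regularity I must produce a core: a subset of $\mathcal D\cap C_c(\Omclo)$ that is both dense in $C_c(\Omclo)$ in the uniform norm and dense in $D(\mathcal E)$ in the $\mathcal E_1$-norm. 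The natural candidate is $\mathcal D$ itself (or $C_c^\infty(\Om)$ together with suitable cutoffs of coordinate functions near $\partial\Om$ to capture the Neumann behavior); uniform density in $C_c(\Omclo)$ follows from a Stone--Weierstrass / partition-of-unity argument since $\mathcal D$ contains enough functions to separate points and includes products with smooth cutoffs, while $\mathcal E_1$-density of $\mathcal D$ in its closure is automatic by definition of the closure. Assembling these three pieces — closability, strong locality, Markovianity plus a core — gives the statement; again, all of this is precisely \cite[Theo.~1.5]{BG13}, which I would cite directly rather than reprove.
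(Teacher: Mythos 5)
The paper offers no proof of this statement at all—it imports it verbatim from \cite[Theo.~1.5]{BG13}—and your proposal ends with exactly that citation, while your sketch (closability by exhausting $\Om \cap \{\varrho>0\}$ by compact sets on which $\varrho$ and the smallest eigenvalue of $A$ are bounded below by positive constants, so that an $\mathcal E$-Cauchy sequence tending to $0$ in $L^2(\Omclo,\mu)$ has gradients tending to $0$ locally, plus Markovianity via the unit contraction and regularity via the core $\mathcal D$) is the standard route behind that cited result, so you take essentially the same approach as the paper. One small correction to the sketch: $L^2(\Omclo,\mu)$-density of the domain, and the uniform density in $C_c(\Omclo)$ needed for regularity, should be obtained from restrictions to $\Omclo$ of $C^\infty_c(\R^d)$ rather than from $C^\infty_c(\Om)$, since for a general open $\Om$ the boundary need not be Lebesgue-null and functions vanishing on $\partial\Om$ need not be dense in $C_c(\Omclo)$.
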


Next we fix the regularity and differentiability conditions on $\varrho$.
\begin{condition} \label{CondDiffDensity}
For the density it holds $\sqrt{\varrho} \in H^{1,2}_{\text{loc}}(\Omclo)$. There exists $p \ge 2$ with $p > \frac{d}{2}$ such that 
\begin{align}
\frac{|\nabla \varrho|}{\varrho} \in L_{\text{loc}}^p(\Omclo \cap \{ \varrho > 0 \},\mu). \label{EqCondDiffDensity}
\end{align}
\end{condition}

For the pointwise construction of the process in \cite{BG13} additional regularity on the boundary and the matrix is assumed.
\begin{condition} \label{Cond1DiffBoundary}
There exists a subset $\Gamma_2 \subset \partial \Om$, open in $\partial \Om$, such that the boundary is locally $C^2$-smooth at every $x \in \Gamma_2$ and $\capE(\partial \Om \setminus \Gamma_2)=0$. For the matrix $A$ it holds $A \in C^1(\Omclo)$. 
\end{condition}
Here $\capE$ denotes the capacity of the Dirichlet form $(\mathcal E,D(\mathcal E))$. From general Dirichlet form theory it follows that $(\mathcal E,D(\mathcal E))$ has an associated $L^2$-strongly continuous contraction semigroup $(T^2_t)_{t \ge 0}$ and resolvent $(G^2_\lambda)_{\lambda > 0}$. Furthermore, there exists an associated $L^p$-strongly continuous contraction semigroup $(T^p_t)_{t \ge 0}$ and resolvent $(G^p_\lambda)_{\lambda > 0}$ for $p \in [1,\infty)$. With \emph{associated} we mean here that $T^p_t f = T^2_t f$ for all $t \ge 0$ and $f \in L^1(\Omclo,\mu) \cap L^\infty(\Omclo,\mu)$. In the same way we define \emph{associated} for the $L^p$-resolvent. 
We denote the corresponding infinitesimal generator by $(L_p,D(L_p))$ and call it shortly the $L^p$-generator.
It is important to note that $(T^p_t)_{t \ge 0}$ is analytic for $1 < p < \infty$, see the explanation in \cite{BGS13} before Theorem 1.4 therein.
From \cite[Theo.~1.1.4]{BG13} we get.

\begin{theorem} \label{TheoDiffProcess}
Assume Conditions \ref{Cond1Matrix}, \ref{CondContinuity}, \ref{CondDiffDensity} and \ref{Cond1DiffBoundary}.  
Let $p$ be as in Condition \ref{CondDiffDensity}, $\Gamma_2$ as in Condition \ref{Cond1DiffBoundary}. Define $E_1 := (\Om \cup \Gamma_2) \cap \{ \varrho > 0\}$. Then there exists a diffusion process (i.e.,~a strong Markov process having continuous sample paths) 
\begin{align*}
\mathbf M = (\mathbf \Om,\mathcal F, (\mathcal F_t)_{t \ge 0}, (\mathbf X_t)_{t \ge  0},(\mathbb P_x)_{x \in E \cup \{ \Delta \}})
\end{align*}
with state space $\Omclo$ and cemetery $\Delta$, the Alexandrov point of $\Omclo$. The set $E_1$ is $\mathbf M$-invariant (in the sense of Definition \ref{DefRestrictedHuntProcess}). The transition semigroup $(P_t)_{t \ge 0}$ is associated with $(T^p_t)_{t \ge 0}$ and is $\mathcal L^p$-strong Feller, i.e.,~$P_t \mathcal L^p(\Omclo,\mu) \subset C(E_1)$ for $t>0$. The corresponding resolvent kernels $(R_\alpha)_{\alpha > 0}$ are also $\mathcal L^p$-strong Feller on $E_1$.
The process has continuous paths on $[0,\infty)$ and it solves the martingale problem associated with $(L_p,D(L_p))$ for starting points in $E_1$, i.e.,
\begin{align*}
M_t^{[u]}:=\widetilde u({\mathbf X}_t) - \widetilde u(x) - \int_0^t{L_p u({\mathbf X}_s)~ds},~t\geq0,
\end{align*}
is an $(\mathcal{F}_t)$-martingale under $\mathbb{P}_x$ for all $u \in D(L_p)$, $x \in E_1$. 
\end{theorem}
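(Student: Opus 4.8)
The plan is to start from the general correspondence between regular Dirichlet forms and Markov processes and then to upgrade every assertion from holding \emph{quasi-everywhere} to holding at \emph{every} point of $E_1$, which is the real content of the statement. By Theorem \ref{TheoIntroClosable} the form $(\mathcal E, D(\mathcal E))$ is a strongly local, regular Dirichlet form on $L^2(\Omclo,\mu)$, so by \cite{FOT11} there is an associated $\mu$-symmetric diffusion — a special standard process with continuous sample paths up to its lifetime — whose transition semigroup is a version of $(T^2_t)_{t \ge 0}$, defined for quasi-every starting point. The work is to produce versions of the transition and resolvent kernels that are defined for \emph{every} $x \in E_1$, are $\mathcal L^p$-strong Feller, and carry a Hunt process; the hard part will be the first of these.

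The decisive step is to show $G^p_\lambda\bigl(\mathcal L^p(\Omclo,\mu)\bigr) \subset C(E_1)$ with local sup-bounds. By the definition of the $L^p$-generator and integration by parts against the form, $w := G^p_\lambda f$ is a weak solution on $\Om$ of the elliptic equation $-\nabla \cdot (A \nabla w) - \bigl(A \tfrac{\nabla \varrho}{\varrho}, \nabla w\bigr) + \lambda w = f$ together with a homogeneous conormal boundary condition on $\Gamma_2$. Under Conditions \ref{Cond1Matrix}--\ref{Cond1DiffBoundary} — in particular $\tfrac{|\nabla \varrho|}{\varrho} \in L^p_{\text{loc}}$ with $p > \tfrac d2$, $A \in C^1(\Omclo)$, and local $C^2$-smoothness of $\partial \Om$ along $\Gamma_2$ — the elliptic regularity result of \cite{BG13} (see Theorem \ref{TheoFiniteEnergyIntegral} and the De Giorgi--Nash--Moser type interior and boundary estimates behind it) yields that $w$ has a continuous, indeed locally H\"older continuous and $H^{2,q}_{\text{loc}}$, $\mu$-version on $E_1$, with local bounds of $\|w\|_\infty$ in terms of $\|f\|_{\mathcal L^p}$. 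From these one reads off the existence of genuine $\mathcal L^p$-strong Feller resolvent kernels $(R_\alpha)_{\alpha > 0}$ on $E_1$ associated with $(G^p_\lambda)_{\lambda > 0}$; the analyticity of $(T^p_t)_{t \ge 0}$ for $1 < p < \infty$ transports this to the semigroup and gives the $\mathcal L^p$-strong Feller kernels $(P_t)_{t > 0}$.

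Next I would check that $\Omclo \setminus E_1$ is $\mathcal E$-exceptional: $\{\varrho = 0\}$ is $\mu$-null and $\partial \Om \setminus \Gamma_2$ has zero capacity by Condition \ref{Cond1DiffBoundary}, hence $\capE(\Omclo \setminus E_1) = 0$ and the diffusion avoids this set quasi-surely. Combining this with the strong Feller kernels just constructed, the restriction and regularization machinery of \cite{BGS13} produces a Hunt process — by strong locality a diffusion — $\mathbf M$ on $\Omclo$ whose transition function is the chosen $\mathcal L^p$-strong Feller $(P_t)_{t \ge 0}$ and for which $E_1$ is invariant in the sense of Definition \ref{DefRestrictedHuntProcess}; the absolute continuity $P_t(x,\cdot) \ll \mu$ for $t > 0$ is exactly what makes the everywhere-on-$E_1$ statements internally coherent. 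Continuity of the paths on all of $[0,\infty)$, not merely up to the lifetime, follows since the process can leave $\Omclo$ only through the cemetery and strong locality excludes jumps.

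Finally, for the martingale problem: for $u \in D(L_p)$ the Kolmogorov equation gives $P_t u - u = \int_0^t P_s L_p u \, ds$ in $\mathcal L^p$, and since by the $\mathcal L^p$-strong Feller property both sides admit continuous versions on $E_1$, this identity holds pointwise there. A standard Markov-property computation then shows that $M_t^{[u]} = \widetilde u(\mathbf X_t) - \widetilde u(x) - \int_0^t L_p u(\mathbf X_s) \, ds$ has vanishing $\mathbb P_x$-conditional expectation given $\mathcal F_s$. The one genuine subtlety is that the time integral is a priori defined only $\mu$-a.e.; this is handled by a Khasminskii-type estimate, using that $L_p u \in \mathcal L^p(\Omclo,\mu)$ with $p > \tfrac d2$: absolute continuity together with the resolvent bounds of the second step give $\mathbb E_x \int_0^t |L_p u|(\mathbf X_s) \, ds < \infty$ for every $x \in E_1$ (cf. Theorem \ref{TheoremLpIntegraldt}), so $t \mapsto \int_0^t L_p u(\mathbf X_s) \, ds$ is a well-defined continuous additive functional and $M^{[u]}$ a true $(\mathcal F_t)$-martingale under each $\mathbb P_x$, $x \in E_1$. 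Everything outside the elliptic-regularity step — extracting pointwise continuity and sup-bounds of resolvent images in the presence of the $L^p$-singular logarithmic-derivative drift and with only partial ($\Gamma_2$) boundary smoothness — is then a fairly standard assembly.
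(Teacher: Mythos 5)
The paper offers no proof of this statement at all: Theorem \ref{TheoDiffProcess} is imported verbatim from \cite[Theo.~1.4]{BG13}, and your sketch reproduces essentially the strategy of that reference as summarized in the introduction here --- elliptic regularity turning resolvent images into continuous functions on $E_1$ (hence $\mathcal L^p$-strong Feller kernels), the construction machinery of \cite{BGS13} together with $\capE(\Omclo \setminus E_1)=0$ to obtain the diffusion with $E_1$ invariant, and the Kolmogorov-equation plus integrability argument for the martingale problem. The only blemishes are citational: the regularity input is \cite[Theo.~4.4]{BG13} (and \cite[Theo.~1.11]{BG13} for the continuity of versions of elements of $D(L_p)$), not Theorem \ref{TheoFiniteEnergyIntegral}, which is a result of the present paper about potentials of surface measures, and Theorem \ref{TheoremLpIntegraldt} is likewise downstream of the theorem being proved, so it can serve only as an analogy (as you in effect use it) and not as an ingredient.
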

Here $\widetilde{u}$ denotes the continuous version of $u$ on $E_1$ that is provided by the regularity of $D(L_p)$ see \cite[Theo.~1.11]{BG13}.
Here $(P_t)_{t \ge 0}$ being associated with $(T^p_t)_{t \ge 0}$ means that $P_t f$ is a $\mu$-version of $T^p_t f$ for $f \in \mathcal L^1(\Omclo,\mu) \cap \mathcal B_b(\Omclo)$ (the space of Borel measurable bounded functions). By $\mathcal L^p(\Omclo,\mu)$ we denote the space of all $p$-integrable functions on $(\Omclo,\mu)$.

In \cite{BG13} it is shown that the set
\begin{align*}
\mathcal D_{\text{Neu}} := \{ u \in C^2_c((\Om \cup \Gamma_2) \cap \{ \varrho > 0 \}) \, | \, (\unitnormal, A \nabla u) = 0 \ \text{on} \, \Gamma_2 \}
\end{align*}
is contained in the domain $D(L_p)$, the domain of the $L^p$-generator for $p$ as in Condition \ref{CondDiffDensity}. 
Here $\unitnormal(x)$ denotes the outward unit normal at $x \in \Gamma_2 \subset \partial \Om$ orthogonal to $\partial \Om$. 
By $C^2((\Om \cup \Gamma_2) \cap \{ \varrho > 0 \})$ we denote the space of all $C^2$-smooth functions on $\Om \cap \{ \varrho > 0 \}$, such that the functions and their derivatives admit continuous extensions to the boundary part $\Gamma_2 \cap \{ \varrho > 0 \}$. The subindex $c$ marks that the functions have compact support and that the support is contained in $(\Om \cup \Gamma_2) \cap \{ \varrho > 0 \}$, i.e., $\supp[u] \subset \subset (\Om \cup \Gamma_2) \cap \{ \varrho > 0 \}$ for $u \in C^2_c((\Om \cup \Gamma_2) \cap \{ \varrho > 0 \})$.

Using partial integration we get that for $u \in \DNeu$ the $L^p$-generator has the following form:
\begin{align}
L_p u = \Ltil u := \sum^d_{i,j=1} a_{ij} \pdi \pdj u + \sum^d_{j=1} \left(\sum^d_{i=1} \pdi a_{ij} + \sum^d_{i=1} \frac{1}{\varrho} a_{ji} \pdi \varrho \right) \pdj u. \label{EqLtil}
\end{align}

So we have that
\begin{align*}
M_t^{[u]}:=u({\mathbf X}_t) - u(\mathbf X_0) - \int_0^t{ \hat{L} u({\mathbf X}_s)~ds},~t\geq0,
\end{align*}
is an $(\mathcal{F}_t)$-martingale under $\mathbb{P}_x$ for all $x \in E_1$ and $u \in \DNeu$.

We aim to extend the martingale solution property to a larger class of functions, namely $C^2_c(E_1)$. If we do a partial integration for functions in that larger space, there appears an additional boundary term. In order to incorporate this additional term in the martingale formulation we need to construct the boundary local time of $\mathbf M$ at the boundary part $\Gamma_2 \cap \{ \varrho > 0\}$. Roughly speaking the boundary local time measures the time of $(\mathbf X_t)_{t \ge 0}$ at the boundary on a different time scale.

We apply the theory of \cite{FOT11} which gives existence of additive functionals in various classes for processes associated with Dirichlet forms. In particular, the result of \cite[Theo.~5.1.6]{FOT11} allows us to construct additive functionals without exceptional set. Therein it is assumed that the semigroup $(P_t)_{t > 0}$ of the process  is absolutely continuous (w.r.t.~the reference measure) on the whole state space of the process. In our setting the semigroup is absolutely continuous on the subset $E_1 = (\Om \cup \Gamma_2) \cap \{ \varrho > 0 \}$ only. However, by considering the \emph{restriction} of $\mathbf M$ to $E_1$ we get a semigroup that fulfills the absolute continuity condition for \emph{every point} in the state space. The reader is referred to Definition \ref{DefRestrictedHuntProcess} below for the definition of a restricted process. From Theorem \ref{TheoDiffProcess} we can conclude that the restricted process enjoys the same properties as the original one.
\begin{corollary} \label{CoroRestriction}
Assume the same conditions as in Theorem \ref{TheoDiffProcess} and denote by
$\mathbf M$ the diffusion process constructed in Theorem \ref{TheoDiffProcess}. Define the restricted process 
\begin{align*}
\mathbf M^1 := (\mathbf \Om^1,\mathcal F^1, (\mathcal F^1_t)_{t \ge 0}, (\mathbf X^1_t)_{t \ge  0},(\mathbb P^1_x)_{x \in E_1 \cup \{ \Delta \}})
\end{align*}
of $\mathbf M$ to $E_1$ (see Definition \ref{DefRestrictedHuntProcess}). Then $\mathbf M^1$ is a $\mathcal L^p$-strong Feller diffusion process with state space $E_1$ and cemetery $\Delta$. The transition semigroup $(P^1_t)_{t \ge 0}$ of $\mathbf M^1$ is $\mathcal L^p$-strong Feller, i.e.,~$P^1_t \mathcal L^p(\Omclo,\mu) \subset C(E_1)$ for $t>0$. In particular, $(P^1_t)_{t > 0}$ is absolutely continuous on $E_1$.
\end{corollary}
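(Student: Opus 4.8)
The plan is to verify that the restricted process $\mathbf M^1$ inherits from $\mathbf M$ all the properties claimed, the key point being that restricting a Hunt (diffusion) process to an invariant set is a routine operation once invariance is known, and the absolute continuity of the restricted semigroup on $E_1$ follows from the $\mathcal L^p$-strong Feller property already established in Theorem \ref{TheoDiffProcess}. First I would recall the definition of the restricted process (Definition \ref{DefRestrictedHuntProcess}): since $E_1$ is $\mathbf M$-invariant, the paths $(\mathbf X_t)_{t\ge0}$ started at $x\in E_1$ stay in $E_1$ up to the lifetime $\mathbb P_x$-almost surely, so one simply takes $\mathbf\Om^1$ to be the set of paths with values in $E_1\cup\{\Delta\}$, equips it with the trace $\sigma$-algebras $\mathcal F^1$, $\mathcal F^1_t$, restricts the shift and coordinate maps, and keeps the family $(\mathbb P_x)_{x\in E_1\cup\{\Delta\}}$. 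By $\mathbf M$-invariance this family is consistent and $\mathbf M^1$ is again a strong Markov process with continuous sample paths and state space $E_1$; the diffusion property is not destroyed by passing to an invariant subspace.

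Next I would check the $\mathcal L^p$-strong Feller property of the transition semigroup $(P^1_t)_{t\ge0}$. Because $E_1$ is invariant, for $f\in\mathcal L^p(\Omclo,\mu)$ and $x\in E_1$ one has $P^1_t f(x) = \mathbb E_x[f(\mathbf X^1_t)] = \mathbb E_x[f(\mathbf X_t)] = P_t f(x)$, i.e.\ $P^1_t f = (P_t f)|_{E_1}$. By Theorem \ref{TheoDiffProcess} the function $P_t f$ lies in $C(E_1)$ for $t>0$, hence so does its restriction, which gives $P^1_t\,\mathcal L^p(\Omclo,\mu)\subset C(E_1)$. The same argument applied to the resolvent kernels shows $(R^1_\alpha)_{\alpha>0}$ is $\mathcal L^p$-strong Feller on $E_1$ as well. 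Finally, absolute continuity of $(P^1_t)_{t>0}$ on $E_1$: since $(P_t)_{t\ge0}$ is associated with $(T^p_t)_{t\ge0}$, which is in turn associated with $(T^2_t)_{t\ge0}$, and $\mu$ restricted to $E_1$ is a reference measure, one shows that for $N\subset E_1$ with $\mu(N)=0$ we have $P^1_t\mathbf 1_N = (P_t\mathbf 1_N)|_{E_1} = (T^p_t\mathbf 1_N)|_{E_1} = 0$ $\mu$-a.e.\ on $E_1$; but $P_t\mathbf 1_N$ is continuous on $E_1$ for $t>0$, so a function continuous on $E_1$ and vanishing $\mu$-a.e.\ on $E_1$ vanishes identically on $E_1$ (using that $\mu$ has full topological support in $E_1$, which holds because $\varrho>0$ on $E_1$). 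Hence $P^1_t(x,N)=0$ for every $x\in E_1$ and every $\mu$-null $N$, which is precisely the absolute continuity condition needed to invoke \cite[Theo.~5.1.6]{FOT11}.

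The main obstacle is the bookkeeping around the restriction construction rather than any deep analytic difficulty: one must be careful that $\mathbf M$-invariance of $E_1$ in the sense of Definition \ref{DefRestrictedHuntProcess} really does guarantee that the restricted probability kernels are well defined and Markovian, and that the lifetime/cemetery conventions match so that $\mathbf M^1$ is genuinely a Hunt (indeed diffusion) process with state space $E_1$ and cemetery $\Delta$. Once that is in place, the $\mathcal L^p$-strong Feller property and the absolute continuity transfer verbatim from Theorem \ref{TheoDiffProcess} by the identity $P^1_t f = (P_t f)|_{E_1}$ together with continuity on $E_1$ and the full support of $\mu$ there. I would therefore organize the proof as: (i) recall the restriction and note invariance makes it licit, quoting Theorem \ref{TheoDiffProcess} for invariance; (ii) record $P^1_t f = (P_t f)|_{E_1}$ and deduce the $\mathcal L^p$-strong Feller property for semigroup and resolvent; (iii) deduce absolute continuity on $E_1$ from continuity plus $\mu\big(\{\varrho>0\}\big)$ having full support.
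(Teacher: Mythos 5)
Your proposal is correct and follows essentially the same route as the paper: the paper likewise invokes the restriction-of-a-Hunt-process result (\cite[Appendix, (A.2.23)]{FOT11}) together with the $\mathbf M$-invariance of $E_1$ from Theorem \ref{TheoDiffProcess}, transfers path continuity and the $\mathcal L^p$-strong Feller property via $\mathbb P^1_x = \mathbb P_x|_{\mathcal F^1}$ (so $P^1_t f = (P_t f)|_{E_1}$), and deduces absolute continuity of $(P^1_t)_{t>0}$ on $E_1$ from the strong Feller property. Your explicit argument that a continuous function vanishing $\mu$-a.e.\ on $E_1$ vanishes identically (full support of $\mu$ on $E_1$ since $\varrho>0$ there) just spells out a step the paper leaves implicit.
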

This corollary follows by a general theorem on Hunt processes, see e.g. \cite[Appendix, (A.2.23)]{FOT11} below. The continuity of the sample paths and the $\mathcal L^p$-strong Feller property follows from the fact that $(\mathbb P^1_x)_{x \in E_1 \cup \{ \Delta \}}$ is obtained as restriction of the original path measure $(\mathbb P_x)_{x \in E \cup \{ \Delta \}}$ to $\mathcal F^1$.
To avoid overloading of notation we denote the path functions of $\mathbf M^1$ just by $(\mathbf X_t)_{t \ge  0}$ (instead of $(\mathbf X^1_t)_{t \ge  0}$). In the same way we denote the path measure just by $(\mathbb P_x)_{x \in E_1 \cup \{ \Delta \}}$.

For the restricted process $\mathbf M^1$ we construct the local time at compact boundary parts first. For this we have to check regularity of potentials of surface measures at compact boundary parts in $\Gamma_2 \cap \{ \varrho > 0\}$. See Theorem \ref{TheoFiniteEnergyIntegral} and Corollary \ref{LemmaLocaltimeCompactBoundary} below. As mentioned above our strategy is to apply our regularity results (see \cite{BG13}) for elliptic PDE. For this we identify the potentials as weak solutions to elliptic PDE with sufficiently regular right-hand side. The regularity of the right hand side (i.e., integration of a function w.r.t.~to the surface measure) is shown using Sobolev space theorems.

Using a localization procedure as in \cite{FOT11} we obtain the existence of the local time $(\ell_t)_{t \ge 0}$ at $\Gamma_2 \cap \{ \varrho > 0\}$. This local time is in Revuz correspondence to the restricted surface measure $1_{\Gamma_2 \cap \{ \varrho > 0 \}} \sigma$.
Note that in general it is not possible to construct the local time at the whole boundary because $\sigma$ need not to be a smooth measure (see Definition \ref{DefFiniteEnergy} below).

Using the local time we can characterize the process $(u(\mathbf X_t))_{t \ge 0}$ for $u \in C^2_c(E_1)$. More precisely,
\begin{align*}
M_t^{[u]}:= u({\mathbf X}_t) - u(\mathbf X_0) - \int_0^t{ \Ltil u({\mathbf X}_s)~ds} + \int_0^t (A \nabla u, \unitnormal) \, \varrho \, (\mathbf X_s) \, d \ell_s,~t\geq0,
\end{align*}
is an $(\mathcal{F}^1_t)$-martingale under $\mathbb{P}_x$ for all $x \in E_1$ and $u \in C^2_c(E_1)$, see Theorem \ref{TheoMartingaleWithLocalTime} below.

We can characterize the quadratic variation process of the martingale $(M^{[u]}_t)_{t \ge 0}$ \linebreak in terms of the matrix coefficient, see Theorem \ref{TheoremMartingaleEnergyMeasure} below. Altogether, we get a semimartingale decomposition for $(u(\mathbf X_t)-u(\mathbf X_0))_{t \ge 0}$. 

Using a localization technique we get such a Skorokhod decomposition for the process itself.
Denote by $(b_i)_{1 \le i \le d}$ the first-order coefficients of $\Ltil$, see \eqref{EqLtil}. Then we have for $t \ge 0$
\begin{align*}
\mathbf X_{t \minw \lifetime}^{(i)} - \mathbf X^{(i)}_0 = \int_0^{t \minw \lifetime} b_i(\mathbf X_s) \, ds - \int_0^{t \minw \lifetime} (e_i, A \unitnormal) \varrho \, (\mathbf X_s) \, d \ell_s + M^{(i)}_{t \minw \lifetime} \ \ \mathbb P_x-\text{a.s.}
\end{align*}
for $x \in E_1$ and $1 \le i \le d$. Here $e_i$, $1 \le i \le d$, denotes the $i$-th unit vector. The $(M^{(i)}_t)_{t \ge 0}$, $1 \le i \le d$, are continuous local martingales (up to the lifetime $\lifetime$) with quadratic variation process (up to $\lifetime$)
\begin{align*}
\langle M^{(i)}, M^{(j)} \rangle_{\cdot \minw \lifetime} = 2 \, \int_0^{t \minw \lifetime} a_{ij} \, (\mathbf X_s) \, ds \ \text{for} \ 1 \le i,j \le d. 
\end{align*}
See Theorem \ref{TheoLocalMartingalePx} below. Let us emphasize that these decompositions hold under the path measures $\mathbb P_x$ for every $x \in E_1$, i.e., we have again a pointwise statement. For conservative processes we can further conclude existence of weak solutions. 
\begin{theorem} \label{TheoExWeakSolution}
Let $\sigma : \Omclo \to \R^{d \times r}$ be a mapping of $d \times r$ matrices, $r \in \N$, such that $A := \sigma \sigma^\top$ is a continuous mapping of strictly elliptic symmetric matrices.
Assume that $A$ and $\varrho$ satisfy Condition \ref{Cond1Matrix}, Condition \ref{CondContinuity} and Condition \ref{CondDiffDensity}. Assume additionally that the corresponding Dirichlet form (closure of \eqref{EqGradientDirForm}) is conservative and that $\Gamma_2$ and $A$ satisfy Condition \ref{Cond1DiffBoundary}. 
Let $\mu_0 \in \mathcal P(E_1)$ (probability measures on $E_1$). Endow the path space of $\mathbf M^1$ with the law $\mathbb P_{\mu_0}$ (see the proof below).
Then there exists (possibly on an extension of the probability space of $\mathbf M^1$) an $r$-dimensional Brownian motion $W$ such that
\begin{equation}
\mathbf X_t = \mathbf X_0 + \int_0^t \left(\nabla A + A \frac{\nabla \varrho}{\varrho}\right) \, \, (\mathbf X_s) \, ds - \int_0^t \varrho A \unitnormal \, (\mathbf X_s) d \ell_s \\
+ \int_0^t \sqrt{2} \, \sigma(\mathbf X_s) \, d W_s, \ \ t \ge 0,
\end{equation}
and $\mathcal L(\mathbf X_0) = \mu_0$.
\end{theorem}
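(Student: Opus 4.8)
The plan is to obtain the weak solution directly from the pointwise semimartingale (Skorokhod) decomposition of Theorem~\ref{TheoLocalMartingalePx}, the only genuinely new step being the passage from a martingale part with quadratic variation $2A$ to a driving $r$-dimensional Brownian motion via the factorization $A=\sigma\sigma^{\top}$.

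First I would fix the canonical path space of $\mathbf M^1$ and set $\mathbb P_{\mu_0}:=\int_{E_1}\mathbb P_x\,\mu_0(dx)$, so that $\mathbf X_0\in E_1$ $\mathbb P_{\mu_0}$-a.s.\ and $\mathcal L(\mathbf X_0)=\mu_0$. Since the underlying Dirichlet form is assumed conservative, the lifetime $\lifetime$ is infinite $\mathbb P_x$-a.s.\ for $\mu_0$-a.e.\ $x$, hence $\mathbb P_{\mu_0}$-a.s.; therefore the decomposition of Theorem~\ref{TheoLocalMartingalePx} holds with $t\minw\lifetime$ replaced by $t$ for every $t\ge0$. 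Reading off $\Ltil$ from \eqref{EqLtil} one checks that its first-order coefficient vector is exactly $b=\nabla A+A\,\nabla\varrho/\varrho$ (using the symmetry of $A$), so that $\mathbb P_{\mu_0}$-a.s., for $1\le i\le d$,
\[
\mathbf X_t^{(i)}-\mathbf X_0^{(i)}=\int_0^t b_i(\mathbf X_s)\,ds-\int_0^t \varrho\,(e_i,A\unitnormal)(\mathbf X_s)\,d\ell_s+M_t^{(i)},
\]
where the $M^{(i)}$ are continuous local martingales with $\langle M^{(i)},M^{(j)}\rangle_t=2\int_0^t a_{ij}(\mathbf X_s)\,ds$.

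Next I would apply the standard representation theorem for continuous local martingales whose quadratic variation is absolutely continuous in $t$, in the non-square, possibly degenerate version (as in Ikeda--Watanabe or Karatzas--Shreve): since the density of $\langle M^{(i)},M^{(j)}\rangle$ equals $2a_{ij}(\mathbf X_s)=2(\sigma\sigma^{\top})_{ij}(\mathbf X_s)$ with $\sigma$ continuous, hence progressively measurable along the paths, there is, on an extension of the probability space of $\mathbf M^1$ obtained by adjoining an independent $r$-dimensional Brownian motion (used to fill in the noise directions not seen by $\sigma$ when $\operatorname{rank}\sigma<r$), an $r$-dimensional Brownian motion $W$ with $M_t^{(i)}=\sqrt2\,\sum_{k=1}^r\int_0^t\sigma_{ik}(\mathbf X_s)\,dW_s^{(k)}$ for all $t\ge0$, $1\le i\le d$. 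Inserting this into the displayed decomposition and recollecting terms in vector form yields the claimed equation. (If $\sigma$ is square and invertible the extension is unnecessary; in general it is forced precisely because $W$ carries more randomness than $M$.)

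The work here is almost entirely bookkeeping rather than a real obstacle; the two points requiring care are: (i) invoking the martingale representation theorem in its degenerate/rectangular form, which is what makes the passage to an extended probability space mandatory and determines how $W$ is built (via the Moore--Penrose pseudoinverse of $\sigma$ together with the auxiliary Brownian motion); and (ii) checking that $\int_0^t|b_i(\mathbf X_s)|\,ds<\infty$ and $\int_0^t a_{ij}(\mathbf X_s)\,ds<\infty$ hold $\mathbb P_{\mu_0}$-a.s., so that all stochastic integrals are well defined --- but these are already contained in Theorem~\ref{TheoLocalMartingalePx} (they rest on the $\mathcal L^p$-integrability estimates, cf.\ Theorem~\ref{TheoremLpIntegraldt}, and conservativeness) and pass to $\mathbb P_{\mu_0}$ by integrating the corresponding $\mathbb P_x$-a.s.\ statements against $\mu_0$.
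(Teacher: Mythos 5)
Your proposal is correct and follows essentially the same route as the paper's proof: form the mixture law $\mathbb P_{\mu_0}$, use conservativity to drop the lifetime, invoke the pointwise decomposition of Theorem \ref{TheoLocalMartingalePx} (whose martingale and bounded-variation properties transfer from each $\mathbb P_x$ to $\mathbb P_{\mu_0}$), and then apply the representation theorem for continuous local martingales with cross-variation $2\int_0^t a_{ij}(\mathbf X_s)\,ds=\int_0^t(\sqrt2\,\sigma)(\sqrt2\,\sigma)^\top(\mathbf X_s)\,ds$ on a possibly extended probability space --- exactly the adaptation of \cite[Ch.~5, Prop.~4.6]{KS91} used in the paper.
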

For the proof see Section \ref{SecSemiMartingale} below. Here $(\nabla A)_{i} :=\sum^d_{j=1} \pdj a_{ij}(x) $, $1 \le i \le d$.
As admissible starting distributions $\mu_0$ for $\mathbf X_0$ we allow general probability measures on $E_1$, in particular point measures are allowed.
The assumptions on $\varrho$ allow that the drift-term $A \frac{\nabla \varrho}{\varrho}$ has very strong singularities, in particular potentials of Lennard-Jones type can be handled with our method. In the case of a given mapping of strictly elliptic symmetric matrices $A : \Omclo \to \R^{d \times d}$ we may choose $r=d$ and $\sigma = \sqrt{A}$, the unique strictly elliptic square-root of $A$.

Next let us compare our results to other results on construction of (reflected) diffusions.
Chen considers a gradient Dirichlet form with matrix and density with mild differentiability conditions and global lower and upper bounds on the coefficients. Under mild assumptions on the boundary he provides a semimartingale decomposition holding for quasi-every starting point, see \cite{Che93}. Bass and Hsu give a pointwise semimartingale decomposition for reflected Brownian motion in Lipschitz domains, see \cite{BaHs91} and \cite{BaHs90}. They obtain also some results in H\"older domains.


Fukushima and Tomisaki, see \cite{FT95} and \cite{FT96}, construct classical Feller processes associated to gradient Dirichlet forms with uniformly elliptic coefficient matrix. Using the results of \cite{FOT94} a semimartingale decomposition is given. Note that the assumptions in \cite{FT95} and \cite{FT96} exclude singular drifts and the constructed semigroups are classical Feller semigroups.
Hence our results are not covered by the previously mentioned works. 

Pardoux and Williams (see \cite{PaWi94}) as-well as Williams and Zheng (see \cite{WiZh90}) provide approximations of reflected diffusions by diffusions on $\R^d$ or the interior of the state space. The convergence results are obtained by Dirichlet form methods.

Pathwise uniqueness for Brownian motion (without drift) on domains with certain boundary smoothness are obtained by Bass, Burdzy, Chen and Hsu, see \cite{BaHs00}, \cite{BBC05} and \cite{BB08}.

Let us briefly give reference to classical results on reflected diffusions: For strong solutions, see the works of Tanaka (\cite{Tan79}) and of Lions and Sznitman (\cite{LS84}). The latter was generalized to domains with less smooth boundary by Saisho (\cite{Sai87}) and Dupuis and Ishii (\cite{DI93}). Stroock and Varadhan construct reflected diffusions via the (sub-)martingale formulation, see  (\cite{SV71}).

Let us now come to results on SDEs with singular drift.
Trutnau (\cite{Tru03}) constructs a generalized (non-symmetric) Dirichlet form with singular non-symmetric drift term. The corresponding diffusion process is constructed and a Skorokhod decomposition is obtained for quasi-every starting point. Recently we got to know about an article of Shin and Trutnau (\cite{ST14}), which also handles pointwise Skorokhod decompositions for reflected diffusions based on the theory of \cite{FOT11}. The assumed conditions are complementary to ours and the applied methods differ from ours.
The strategy to construct martingale solutions to singular SDEs for explicitly specified starting points has been successfully applied in the already mentioned article by Albeverio, Kondratiev and R\"ockner (\cite{AKR03}), Fattler and Grothaus (\cite{FG07}) and in our own work (\cite{BGS13}, \cite{BG13}). The results of this article apply to all these settings and thus we can show that those solutions even have a Skorokhod decomposition and yield weak solutions.  


For diffusions on $\R^d$, $d \in \N$, there are already some results on SDEs with drifts having singularities. H\"ohnle proves existence of local solutions (\cite{Hoh94}) and criterions for existence of global solutions (\cite{Hoh96}) of Brownian motion distorted by singular drifts.
Krylov and R\"ockner prove existence and uniqueness for strong solutions for SDEs with time-dependent drift terms, see \cite{KrRo05}.

Let us summarize now the main progress of this article:

\begin{sequence}
\item[(i)] We prove regularity and boundedness of potentials of surface measures using elliptic \\ regularity results and Sobolev space theorems.
\item[(ii)] We construct Skorokhod decompositions for reflecting diffusions with variable diffusion \\ coefficients and strongly singular - possibly discontinuous - drift term, that can start \\ from every point in an explicitly known set $E_1 \subset E$.
\item[(iii)] We construct stochastic dynamics for physically reasonable models with hydrodynamic \\ and singular pair interaction.
\end{sequence}


\section{Construction of Strict Additive Functionals}

In this section we present relevant definitions for smooth measures, restriction of processes and additive functionals. Especially, we illustrate how \cite[Theo.~5.1.6]{FOT11} is applied to construct additive functionals on the set $E_1$. Readers who are familiar with \cite[Ch.~5]{FOT11} may skip this section. 
Let $\mathbf M = (\mOm,\mathcal F,(\mathcal F_{t})_{t \ge 0},(\mathbf X_{t})_{t \ge 0},(\mathbb P_{x})_{x \in E})$ be the diffusion process from Theorem \ref{TheoDiffProcess}. 

In \cite[Ch.~5]{FOT11} additive functionals of Hunt processes are constructed on subsets of the state space complemented by an exceptional set. In general this exceptional set is non-empty and depends on the constructed functional.
If the semigroup $(P_t)_{t > 0}$ is absolutely continuous, these results are refined to yield additive functionals with empty exceptional set, see \cite[Theo.~5.1.6]{FOT11}. 

In our case we have that the semigroup $(P_t)_{t > 0}$ is absolutely continuous on $E_1$ only. So we have to apply \cite[Theo.~5.1.6]{FOT11} to the restriction of $\mathbf M$ to $E_1$ rather than the original process $\mathbf M$. 
So altogether we can construct additive functionals for a fixed exceptional set $E \setminus E_1$ that is given in advance.

Let us briefly recall the definition of several classes of measures, see \cite[Ch.~2, Sec.~2]{FOT11}. For the notion of nests and generalized nests, see \cite[Ch.~2, p.~69]{FOT11}. We say that a nest $(F_n)_{n \in \N}$ is \emph{associated}\index{nest!associated with a measure} with a measure $\nu$ if $\nu(F_n) < \infty$ for all $n \in \N$.
\begin{definition} \label{DefFiniteEnergy}
A positive Borel measure $\nu$ is called \emph{smooth} if it charges no set of capacity zero and has an associated generalized nest, see \cite[p.~83]{FOT11}. The class of all smooth measures is denoted by $S$.

We denote by $S_0$ the class of all positive Radon measures of \emph{finite energy integrals}, i.e., for $\nu \in S_0$ the mapping
\begin{align*}
D(\mathcal E) \cap C_c(E) \ni f \mapsto \int_E f d \nu \, \in \R
\end{align*}
is continuous in the $\mathcal E_1$-norm ($\ \Vert \cdot \Vert_{\mathcal E_1} := ( (\cdot,\cdot)_{L^2} + \mathcal E(\cdot, \cdot) \ )^{1/2}$), see \cite[Ch.~2, p.~79]{FOT11}. 

If $\nu \in S_0$ then for $0 < \alpha < \infty$ there exists a corresponding unique $\alpha$-potential $U_\alpha \nu \in D(\mathcal E)$ such that
\begin{align*}
\mathcal E_\alpha(U_\alpha \nu,f) = \int_E f d \nu \quad \text{for all} \ f \in D(\mathcal E),
\end{align*}
see \cite[Ch.~2, Sec.~2]{FOT11}.

Define (as in \cite[Ch.~2, p.~81]{FOT11})
\begin{align*}
S_{00} = \left \{ \nu \in S_0 \, \bigg | \, \nu(E) < \infty, \, U_\alpha \nu \, \text{is essentially} \, \mu-\text{bounded for} \, \alpha > 0 \right \}.
\end{align*}
\end{definition}
Let us introduce the class of \emph{smooth measures in the strict sense}, see \cite[Ch.~5, p.~238]{FOT11}.
\begin{definition} \label{DefSmoothStrictSense}
Let $\mathbf M$ be the Hunt process as introduced in the beginning of the section. For $F \subset E$ a Borel set we denote by $\sigma_{F}$ the hitting time of $F$.
We say that a positive Borel measure $\nu$ on $E$ is \emph{smooth in the strict sense} if there exists a sequence $(E_n)_{n \in \N}$ of Borel sets increasing to $E$ such that $1_{E_n} \cdot \nu \in S_{00}$ for each $n \in \N$ and
\begin{align*}
\mathbb P_x \left( \limn \sigma_{E \setminus E_n} \ge \lifetime \right) = 1, \, \text{for all} \, x \in E.
\end{align*} 
The class of all smooth measures in the strict sense is denoted by $S_1$.
\end{definition}

Next, let us introduce the notion of the \emph{restriction of a process} to subsets of $E^\Delta$.
\begin{definition} \label{DefRestrictedHuntProcess}
Let $\subE \subset E$ be nearly Borel. Define 
\begin{align*}
\mOm^{\tilde{E}} = \left \{ \omega \in \mOm \  \big| \ \mathbf X_t(\omega) \in \subE^\Delta, \, \mathbf X_{t-}(\omega) \in \subE^\Delta \ \text{for all} \ t \ge 0  \right \}.
\end{align*}
We say that $\subE$ is $\mathbf M$-invariant if $\mathbb P_x(\mOm^{\tilde{E}}) = 1 \ \text{for all} \ x \in \subE^\Delta$.
If $\subE$ is $\mathbf M$-invariant we define the \emph{restriction} of $\mathbf M$ to $\subE$ by
\begin{align*}
\mMRes := (\mOmRes,\mFRes,(\mFtRes{t})_{t \ge 0},(\mXtRes{t})_{t \ge 0},(\mPxRes{x})_{x \in \subE}),
\end{align*}
with $\mOmRes := \mOm^{\tilde{E}}$, $\mFRes = \mathcal F \cap \mOmRes $, $\mXtRes{t} = {\mathbf X}_{t}|_{\mOmRes} \ \text{for} \ t \ge 0 $,
$\mPxRes{x} = \mathbb P_x|_{\mFRes} \ \text{for} \ x \in \subE $.
As filtration $(\mFtRes{t})_{t \ge 0}$ we take the minimum completed admissible filtration of $(\mXtRes{t})_{t \ge 0}$. For $\widetilde{\mathcal F}^0_\infty := \bigcup_{t > 0} \mFtRes{t}$ define $\mFResInftCompl := \bigcap_{m \in \mathcal P(E^\Delta_1)} \mFResInftCompl^m$. Here $\mFResInftCompl^m$ denotes the \emph{completion} of $\mFResInftCompl^0$ under $\mathbb P_m|_{\mFRes}$. 
See e.g. \cite[Appendix, p.~386]{FOT11} for further details. 
\end{definition}
See \cite[Appendix, (A.2.23)]{FOT11} for this definition. As in the mentioned reference we get that the restriction of a Hunt process is again a Hunt process.

Let us now introduce the definition of an additive functional in the sense of \cite[p.~222, (A.1) and (A.2)]{FOT11}. We denote by $\shiftop_t: \mOm \mapsto \mOm$, $t \ge 0$, the shift operator: $\shiftop_t \omega \ ( \cdot ) = \omega( \cdot + t )$, i.e., the path is shifted to the left by $t$.
\begin{definition} \label{DefAddFunc}
Let $N \subset E$ be a properly exceptional set (see \cite[Ch.~4, p.~153]{FOT11}) and $\mMRes[E \setminus N]$ be the restriction of the Markov process $\mathbf M$ to the set $E \setminus N$ as in Definition \ref{DefRestrictedHuntProcess}.
Let $\Lambda \in \mFResInftCompl$ with $\mPxRes{x}(\Lambda) = 1$ for $x \in E \setminus N$ and $\shiftop_t \Lambda \subset \Lambda$ for $t \ge 0$.

A mapping $A = (A_t)_{t \ge 0} : \mOmRes[E \setminus N] \to \R_0^+$ is called an \emph{additive functional} (AF) with exceptional set $N$ and defining set $\Lambda$ if $A_0 = 0$, $A_t$ is $\mFtRes[E \setminus N]{t}$-adapted, for $t \ge 0$, and the following properties hold:
\begin{sequence}
\item[(i)] $|A_{t}(\omega)| < \infty$ \ \text{for} \ $t < \lifetime(\omega)$, $\omega \in \Lambda$.
\item[(ii)] For every $\omega \in \Lambda$, $[0,\infty) \ni t \mapsto A_t(\omega)$ is right continuous and has left limit on $[0, \lifetime(\omega))$.
\item[(iii)] $A_{t}(\omega) = A_{\lifetime(\omega)}(\omega)$ \ \text{for} \ $t \ge \lifetime(\omega)$ and $\omega \in \Lambda$.
\item[(iv)] $A_{t+s}(\omega) = A_t(\omega) + A_s(\shiftop_t \omega)$  \text{for} \ $0 \le s,t < \infty$ and $\omega \in \Lambda$.
\end{sequence}

A functional $\funcA$ is called \emph{finite} if (i) holds for $t < \infty$ instead of just $t < \lifetime(\omega)$.

A functional $\funcA$ is called \emph{positive} if $A_{t}(\omega) \in [0,\infty]$ for $t \ge 0$, $\omega \in \Lambda$.

A functional $\funcA$ is called \emph{continuous} if instead of (ii) the stronger condition holds:
\begin{sequence}
\item[(ii')] For every $\omega \in \Lambda$, $[0,\infty) \ni t \mapsto A_t(\omega)$ is continuous.
\end{sequence}

A mapping $A = (A_t)_{t \ge 0} : \mOmRes[E^{\Delta} \setminus N] \to \R \cup \{ \infty \}$ is called a \emph{local continuous additive functional}\index{local CAF|see{local continuous additive functional}} \index{local continuous additive functional} with exceptional set $N$ and defining set $\Lambda$ as above, if $A_0 = 0$, $A_t$ is $\mFtRes[E^{\Delta} \setminus N]{t}$-adapted and:
\begin{sequence}
\item[(i)] $|A_{t}(\omega)| < \infty$ \ \text{for} \ $t < \lifetime(\omega)$, $\omega \in \Lambda$.
\item[(ii)] For every $\omega \in \mOmRes[E^{\Delta} \setminus N]$, $[0,\lifetime(\omega)) \ni t \mapsto A_t(\omega)$ is continuous.
\item[(iii)] $A_{t+s}(\omega) = A_t(\omega) + A_s(\theta_t \omega)$  \text{for} \ $0 \le s,t < \infty$ with $s+t < \lifetime(\omega)$ and $\omega \in \Lambda$.
\end{sequence}
\end{definition}

Observe that both the exceptional set $N \subset E$ and the defining set $\Lambda \subset \mOm$ depend on the additive functional $A$. However, we are interested in the construction of additive functionals, where the exceptional set is given in advance.


Note that we assume the additivity on the set $\Lambda$ and $\Lambda$ is chosen independently of $x$. Such a PCAF is called \emph{perfect} in the sense of \cite[Ch.~IV, Def.~1.3]{BG68}. 

Let us introduce now an important link between measures and AF, the so-called Revuz correspondence. For our purpose the following equivalent characterization of the Revuz correspondence is most suitable: A smooth measure $\nu$ is said to be in \emph{Revuz correspondence} to a PCAF $A$ if
\begin{align}
\int_{E \setminus N} h \,  (U^\alpha_{A} f) \, d \mu = \int_{E \setminus N} (R^{E \setminus N}_\alpha h) \, f d \nu \ \text{for} \ h,f \in \mathcal B^+(E \setminus N) \ \text{and} \ \alpha > 0 \label{EqRevuzCorrespondence}
\end{align}
with $U^\alpha_{A} f(x) := \mathbb E^{(E \setminus N)}_x[\int_0^\infty e^{-\alpha s} f(\mathbf X_s) d A_s]$, $x \in E^\Delta \setminus N$, $N$ being the exceptional set of $A$. Here $(R^{E \setminus N}_\alpha)_{\alpha > 0}$ and $\mathbb E_{\cdot}^{E \setminus N} [\, \cdot \,]$ denote the resolvent and expectation, respectively, of the restricted process $\mMRes[E \setminus N]$.  

See \cite[Theo.~5.1.3]{FOT11} for these definitions and further equivalent descriptions of the Revuz correspondence.
In the case that $\nu$ has an $\alpha$-potential, i.e., $\nu \in S_0$, the Revuz correspondence is equivalent to:
\begin{align*}
U^1_{A} 1 \ \text{is a $\mathcal E$-quasi-continuous version of} \ U_1 \nu.
\end{align*}
For the definition of $\mathcal E$-quasi-continuous, see \cite[Ch.~2, p.~69]{FOT11}.

We are interested in the construction of additive functionals on the fixed (invariant) set $E_1 \subset E$. Denote the restriction of $\mathbf M$ to $E_1 \cup \{\Delta\}$ by \\ $\mathbf M^1 := (\mathbf \Om^1,\mathcal F^1,(\mathcal F^1_t)_{t \ge 0},(\mathbf X^1_t)_{t \ge 0},(\mathbb P^1_x)_{x \in E_1 \cup \{ \Delta \}})$. The corresponding resolvent and semigroup we denote just by $(R_\alpha)_{\alpha > 0}$ and $(P_t)_{t \ge 0}$, respectively.
The $\mathcal L^p$-strong Feller property yields that the semigroup $(P_t)_{t > 0}$ is absolutely continuous on $E_1$, i.e.,
there exists a $\mathcal B(E_1 \times E_1)$-measurable density $p_t :E_1 \times E_1 \to \R_0^+$, $t > 0$, such that
\begin{align*}
P_t f(x) = \int_{E_1} f(y) \, p_t(x,y) d \mu(y) \ \text{for} \ f \in \mathcal B^+(E_1) \ \text{and} \ x \in E_1.
\end{align*}
According to \cite[Ch.~2.2]{FOT11} we call a function $u \in L^2(E,\mu)$ $\alpha$-excessive\index{excessive} if
\begin{align}
u(x) \ge 0 \ \text{and} \  e^{-\alpha t} T_t u \, (x) \le u \, (x) \quad \ \text{for} \ \mu\text{-a.e.}\,x. \label{EqAlphaExcessive}
\end{align}

The absolute continuity condition of the semigroup transfers to the resolvent. From \cite[Lem.~4.2.4]{FOT11} it follows that the resolvent $(R_\alpha)_{\alpha > 0}$ has a non-negative density $(r_\alpha(x,y))_{\alpha > 0}$, $x,y \in E_1$, that is $\alpha$-excessive both in $x$ and $y$.
For $\nu \in S_{0}$ the corresponding $\alpha$-potential $U_\alpha \nu$ has a quasi-continuous and $\alpha$-excessive version $\widetilde{U_\alpha} \nu$ that is obtained by the resolvent density:
\begin{align*}
\widetilde{U_\alpha} \nu \, (x) = R_\alpha \nu \, (x) := \int_{E_1} r_\alpha (x,y) d \nu(y), \ \text{for every} \ x \in E_1,
\end{align*}
see \cite[Ex.~4.2.2]{FOT11}. For $\nu \in S_{00}$ the function $R_\alpha \nu$ is even bounded for \emph{every} $x \in E_1$ and \eqref{EqAlphaExcessive} holds for every $x \in E_1$. 


This potential is then used to construct a PCAF for $\nu \in S_{00}$. So by applying \cite[Theo.~5.1.6]{FOT11} to the restricted process $\mathbf M^1$ we obtain the following theorem.
\begin{theorem} \label{TheoremConstrTildeS}
Let $\nu \in S_{00}$ and let $(R_{\alpha} \nu)_{\alpha > 0}$ be the corresponding potentials. 

Then there exists a unique finite PCAF $(\widetilde{A}_t)_{t \ge 0}$ that is in Revuz correspondence to $\nu$ and has the exceptional set $E \setminus E_1$. For $\widetilde{A}_t$ it holds
\begin{align}
\mathbb E^1_x \left[ \int_0^\infty \exp(-\alpha s) d \widetilde{A}_s \right] = {R_\alpha \nu} \, (x) \quad \text{for every} \ x \in E_1 \ \text{and} \ \alpha > 0. \label{EqPotentialTilde}
\end{align}
\end{theorem}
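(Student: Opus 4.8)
The plan is to deduce the statement from \cite[Theo.~5.1.6]{FOT11}, applied not to $\mathbf M$ itself but to its restriction $\mathbf M^1$ to $E_1$, and then to re-read the resulting object as a PCAF of $\mathbf M$ with the prescribed exceptional set $E\setminus E_1$. The first step is to assemble the input data for \cite[Theo.~5.1.6]{FOT11}. By Corollary \ref{CoroRestriction}, $\mathbf M^1$ is a Hunt (indeed diffusion) process with state space $E_1$ whose transition semigroup $(P_t)_{t>0}$ is absolutely continuous with respect to $\mu$ at \emph{every} point of $E_1$; by \cite[Lem.~4.2.4]{FOT11} the associated resolvent then also admits an $\alpha$-excessive density $r_\alpha(x,y)$. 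Since $\mu(E\setminus E_1)=0$ (the surface measure of $\partial\Om$ is $dx$-negligible and $\mu(\{\varrho=0\})=0$), the Hilbert space $L^2(E_1,\mu)$, the form $(\mathcal E,D(\mathcal E))$, and hence the capacity $\capE$ and the classes $S$, $S_0$, $S_{00}$, are unchanged by passing from $\Omclo$ to $E_1$; as $\mathbf M^1$ is $\mu$-symmetric with the same transition operators, its Dirichlet form is again $(\mathcal E,D(\mathcal E))$. A smooth measure charges no $\capE$-polar set, so $\nu\in S_{00}$ may and will be regarded as a measure carried by $E_1$.

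Next I would spell out the concrete form of the hypothesis of \cite[Theo.~5.1.6]{FOT11}. For $\nu\in S_{00}$ the potential $R_\alpha\nu(x)=\int_{E_1}r_\alpha(x,y)\,d\nu(y)$ is, by \cite[Ex.~4.2.2]{FOT11} and the discussion preceding the theorem, a bounded, $\alpha$-excessive, quasi-continuous version of $U_\alpha\nu$ that is defined \emph{pointwise} on all of $E_1$. This is exactly the situation covered by \cite[Theo.~5.1.6]{FOT11}, which then produces, for $\mathbf M^1$, a unique finite PCAF $(\widetilde A_t)_{t\ge0}$ with empty exceptional set that is in Revuz correspondence to $\nu$ and satisfies $\mathbb E^1_x[\int_0^\infty e^{-\alpha s}\,d\widetilde A_s]=R_\alpha\nu(x)$ for every $x\in E_1$ and $\alpha>0$, which is \eqref{EqPotentialTilde}. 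Finiteness follows from $\nu(E)<\infty$ together with the uniform bound on $R_\alpha\nu$ over $E_1$: this forces $\widetilde A_\infty<\infty$ $\mathbb P^1_x$-a.s.\ for every $x\in E_1$, and the construction of \cite[Ch.~5]{FOT11} arranges this on a single defining set.

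Finally, since $\mathbf M^1$ is by construction the restriction of $\mathbf M$ to the $\mathbf M$-invariant set $E_1$ (Definition \ref{DefRestrictedHuntProcess}), a PCAF of $\mathbf M^1$ with empty exceptional set is, verbatim in the sense of Definition \ref{DefAddFunc}, a PCAF of $\mathbf M$ with exceptional set $N=E\setminus E_1$; uniqueness and the Revuz property transfer along the same identification, and \eqref{EqPotentialTilde} holds as stated. The part I expect to require the most care is precisely this bookkeeping between $\mathbf M$ and $\mathbf M^1$: verifying that the absolute-continuity hypothesis of \cite[Theo.~5.1.6]{FOT11} genuinely holds at \emph{every} point of the state space of $\mathbf M^1$ (the content of Corollary \ref{CoroRestriction}), that the classes $S_{00}$ for $\mathbf M$ and $\mathbf M^1$ really agree, and that the exceptional set of the functional, viewed from $\mathbf M$, is exactly $E\setminus E_1$ and no larger; the remaining steps are a direct quotation of \cite[Ch.~5]{FOT11}.
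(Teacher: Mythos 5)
Your proposal is correct and takes essentially the same route as the paper: the paper also obtains the theorem by applying \cite[Theo.~5.1.6]{FOT11} to the restricted process $\mathbf M^1$, invoking Corollary \ref{CoroRestriction} for the everywhere absolute continuity of $(P_t)_{t>0}$ on $E_1$, \cite[Lem.~4.2.4]{FOT11} and \cite[Ex.~4.2.2]{FOT11} for the $\alpha$-excessive resolvent-density representation $R_\alpha\nu(x)=\int_{E_1}r_\alpha(x,y)\,d\nu(y)$, and the pointwise boundedness of $R_\alpha\nu$ for $\nu\in S_{00}$. Your closing bookkeeping, reading the resulting strict PCAF of $\mathbf M^1$ as a PCAF of $\mathbf M$ with prescribed exceptional set $E\setminus E_1$, is exactly the paper's stated intent at the start of the section.
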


Let us call a PCAF with exceptional set $E \setminus E_1$ from now on \emph{strict on} $E_1$. For a strict finite PCAF $(A_t)_{t \ge 0}$ with Revuz measure $\nu$ and $f \in \mathcal B^+_b(E_1)$ the mapping $[0,\infty) \ni t \mapsto \int_0^t f(\mathbf X_s) d A_s$ defines again a strict finite PCAF. The next lemma shows that the corresponding measure is given by $f \, \nu$, i.e., multiplication with a Borel bounded function is compatible with the Revuz correspondence. Denote by $\supp[\nu]$, $\nu$ a measure, the topological support of a measure $\nu$.

\begin{lemma} \label{LemmadfAt}
Let $(A_t)_{t \ge 0}$ be a strict finite PCAF (on $E_1$) with Revuz measure $\nu \in S_{00}$. Let $M \in \mathcal B(E_1)$ such that $\supp[\nu] \subset M$ and $f \in \mathcal B_b(M)$. Then the mapping $f \cdot A := ((f \cdot A)_t)_{t \ge 0}$, 
\begin{align*}
(f \cdot A)_t = \int_0^t f(X_s) d A_s, \ t \ge 0,
\end{align*}
defines a strict finite CAF with same defining set as $A$.

It holds $\mathbb E_x[|(f \cdot A)_t|] < \infty$ and $\mathbb E_x[(f \cdot A)^2_t] < \infty$ for $0 \le t < \infty$ and every $x \in E_1$.
We have
\begin{align}
P_s \mathbb E_\cdot \left[ \int_0^t f(\mathbf X_r) \, d A_r \right] (x) \overset{s \to 0}{\longrightarrow}  \mathbb E_x \left[ \int_0^t f(\mathbf X_r) \, d A_r \right] \quad \text{for every} \ x \in E_1. \label{EqConvergenceS}
\end{align}
If $f \in \mathcal B^+_b(M)$ and $f \, \nu \in S_{00}$, we have for $\alpha > 0$
\begin{align}
\mathbb E_x \left[ \int_0^\infty e^{-\alpha s} f(X_s) d A_s \right] = R_\alpha f \, \nu (x) \quad \text{for every} \ x \in E_1. \label{EqUftilde}
\end{align}
\end{lemma}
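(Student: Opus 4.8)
The plan is to verify first that $f \cdot A$ is a strict finite CAF, then establish the stated integrability, then the continuity \eqref{EqConvergenceS}, and finally identify the potential \eqref{EqUftilde}.

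First I would check the additive-functional axioms from Definition \ref{DefAddFunc} for $(f\cdot A)_t = \int_0^t f(\mathbf X_s)\,dA_s$. Since $f$ is bounded Borel and $A$ is a strict finite PCAF with defining set $\Lambda$, the pathwise Lebesgue--Stieltjes integral is well defined on $\Lambda$, finite for all $t < \infty$ (because $A$ is finite and $f$ is bounded), and continuous in $t$ (because $A$ is continuous). Additivity on $\Lambda$ follows from the additivity $A_{t+s}=A_t + A_s\circ\shiftop_t$ together with the identity $\mathbf X_{r}\circ\shiftop_t = \mathbf X_{t+r}$, applying the change of variables $r \mapsto t+r$ in the Stieltjes integral; adaptedness is inherited from that of $A$. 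Thus $f\cdot A$ is a strict finite CAF with exceptional set $E\setminus E_1$ and the same defining set $\Lambda$ as $A$. (Note it need not be \emph{positive} since $f$ may change sign, which is why the statement only claims ``CAF''.)

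Next, for the integrability bounds: write $f = f^+ - f^-$ with $f^\pm \in \mathcal B_b^+(M)$; then $|(f\cdot A)_t| \le \|f\|_\infty A_t$, and by the Revuz correspondence \eqref{EqRevuzCorrespondence} applied with $h = 1_{B}$ for $B$ of finite $\mu$-measure, together with $\nu \in S_{00}$ and the boundedness of $R_\alpha\nu = \widetilde{U_\alpha}\nu$ for every $x\in E_1$ (from Theorem \ref{TheoremConstrTildeS}), one gets $\mathbb E_x^1[\int_0^\infty e^{-\alpha s}\,dA_s] = R_\alpha\nu(x) < \infty$ for every $x\in E_1$, hence $\mathbb E_x[A_t] < \infty$ for each finite $t$ (e.g. $\mathbb E_x[A_t] \le e^{\alpha t}\mathbb E_x[\int_0^\infty e^{-\alpha s}\,dA_s]$). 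So $\mathbb E_x[|(f\cdot A)_t|]<\infty$. For the second moment one uses the standard identity $\mathbb E_x[A_t^2] = 2\,\mathbb E_x[\int_0^t (A_t - A_s)\,dA_s] = 2\,\mathbb E_x[\int_0^t \mathbb E_{\mathbf X_s}[A_{t-s}]\,dA_s]$ via the additivity and the Markov property; since $x\mapsto \mathbb E_x[A_{t-s}]$ is bounded on $E_1$ (again by $\nu\in S_{00}$ and the potential bound, uniformly for $s\le t$), this is $\le C\,\mathbb E_x[A_t] < \infty$, and then $\mathbb E_x[(f\cdot A)_t^2] \le \|f\|_\infty^2\,\mathbb E_x[A_t^2] < \infty$.

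For \eqref{EqConvergenceS}, set $g(x) := \mathbb E_x[\int_0^t f(\mathbf X_r)\,dA_r]$, a bounded function on $E_1$ by the previous step. By the Markov property and additivity, $P_s g(x) = \mathbb E_x[\int_s^{s+t} f(\mathbf X_r)\,dA_r]$; hence $P_s g(x) - g(x) = \mathbb E_x[\int_t^{s+t} f\,dA_r] - \mathbb E_x[\int_0^s f\,dA_r]$, and each term is bounded in absolute value by $\|f\|_\infty\big(\mathbb E_x[A_{s+t} - A_t] + \mathbb E_x[A_s]\big)$. It remains to show $\mathbb E_x[A_s]\to 0$ and $\mathbb E_x[A_{s+t}-A_t]\to 0$ as $s\to 0$ for each fixed $x\in E_1$. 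For the first, monotone convergence gives $A_s \downarrow A_0 = 0$ pathwise on $\Lambda$ as $s\downarrow 0$, and since $A_s \le A_t$ with $\mathbb E_x[A_t]<\infty$, dominated convergence yields $\mathbb E_x[A_s]\to 0$; for the second, $A_{s+t} - A_t = A_s\circ\shiftop_t$ and the same dominated-convergence argument applies after conditioning on $\mathcal F_t$ (using that $y\mapsto \mathbb E_y[A_s]$ is bounded by $\mathbb E_y[A_{t'}]$ which is bounded on $E_1$). This gives the pointwise convergence on $E_1$.

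Finally, for \eqref{EqUftilde}: assuming $f\in\mathcal B_b^+(M)$ and $f\nu\in S_{00}$, one shows that $f\cdot A$ is the strict finite PCAF in Revuz correspondence to $f\nu$. By uniqueness in Theorem \ref{TheoremConstrTildeS} this forces $\mathbb E_x[\int_0^\infty e^{-\alpha s} f(\mathbf X_s)\,dA_s] = R_\alpha (f\nu)(x)$ for every $x\in E_1$. To verify the Revuz correspondence for $f\cdot A$, plug $U^\alpha_{f\cdot A}h(x) = \mathbb E_x[\int_0^\infty e^{-\alpha s} h(\mathbf X_s) f(\mathbf X_s)\,dA_s] = U^\alpha_A(hf)(x)$ into \eqref{EqRevuzCorrespondence}: for $h,f'\in\mathcal B^+(E_1)$,
\begin{align*}
\int_{E_1} h\,(U^\alpha_{f\cdot A} f')\,d\mu = \int_{E_1} h\,(U^\alpha_A(f'f))\,d\mu = \int_{E_1} (R_\alpha h)\,(f'f)\,d\nu = \int_{E_1} (R_\alpha h)\,f'\,d(f\nu),
\end{align*}
where the middle equality is the Revuz correspondence of $A$ with $\nu$ applied to the test function $f'f \in \mathcal B^+(E_1)$. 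This is exactly the defining relation for $f\cdot A$ and $f\nu$, completing the identification.

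\textbf{Main obstacle.} The routine axiom-checking is harmless; the delicate point is the uniform-in-$x$ (over all of $E_1$, not just $\mu$-a.e.) control of $\mathbb E_x[A_s]$ and $\mathbb E_x[A_t^2]$ needed for the finite second moment and for \eqref{EqConvergenceS}. This is exactly where one must lean on $\nu\in S_{00}$ and on the fact — provided by the $\mathcal L^p$-strong Feller property and Theorem \ref{TheoremConstrTildeS} — that $R_\alpha\nu$ is bounded on \emph{all} of $E_1$, so that $\sup_{x\in E_1}\mathbb E_x[A_t] < \infty$; the pointwise (rather than merely quasi-everywhere) convergence in \eqref{EqConvergenceS} likewise rests on $A_0 = 0$ holding on the defining set $\Lambda$ together with this uniform domination.
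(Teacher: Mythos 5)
Your argument is correct, and it follows the route one would expect here: the paper itself does not prove this lemma but defers to \cite[Lem.~6.1.15]{Ba14}, and your proof uses exactly the ingredients the paper relies on elsewhere (the Revuz relation \eqref{EqRevuzCorrespondence}, the pointwise potential identity and uniqueness from Theorem \ref{TheoremConstrTildeS}, and the second-moment computation of \cite[p.~245]{FOT11} that is also invoked in Theorem \ref{TheoremLpIntegraldt}(ii)). Two small points to tighten: extend $f$ by zero off $M$ (as the paper does in Lemma \ref{LemmaLocaltimeIntegral}) so that $f(\mathbf X_s)$ is defined without appealing to the support of $A$; and in the first-moment step, the identity $\mathbb E_x\left[\int_0^\infty e^{-\alpha s}\,dA_s\right]=R_\alpha\nu(x)$ for \emph{every} $x\in E_1$ should be justified by identifying $A$ with the unique strict PCAF of Theorem \ref{TheoremConstrTildeS} via uniqueness (the Revuz relation alone only gives a $\mu$-a.e.\ statement), which is precisely the device you already use for \eqref{EqUftilde}.
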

For a proof of this lemma, see \cite[Lem.~6.1.15]{Ba14}.

\section{Construction of the Local Time and the Martingale Problem for $C_c^2$-functions}

Using the theory of \cite[Ch.~5]{FOT11} as presented in the previous section, we construct a boundary local time at $\Gamma_2 \cap \{ \varrho > 0 \}$. The local time is constructed as a strict PCAF on $E_1$ which grows only when the process is at $\Gamma_2 \cap \{ \varrho > 0 \}$, see Remark \ref{RemLocaltimeGrows} below. Note that there might be several functionals that have these property. So the term local time does not refer to a specific functional. Nevertheless, we call the functional that we construct \emph{the boundary local time}. 

Throughout this section we keep the same setting as in the introduction. In particular, we assume Conditions \ref{Cond1Matrix}, \ref{CondContinuity}, \ref{CondDiffDensity} and \ref{Cond1DiffBoundary}.
So let us now fix the process $\mathbf M^1$ from Corollary \ref{CoroRestriction} obtained as the restriction of $\mathbf M$ from Theorem \ref{TheoDiffProcess} to $E_1$. 


We use the local time as a building block for a Skorokhod decomposition of a sufficiently large class of functions. For our purpose the set $C^2_c(E_1)$ (recall: $E_1 = (\Om \cup \Gamma_2) \cap \{ \varrho > 0 \}$) is large enough since we can locally approximate the coordinate functions $x^{(i)}$, $1 \le i \le d$, by functions in $C^2_c(E_1)$.
In \cite[Ch.~5]{FOT11} an extended semimartingale decomposition (in the mean-while also called \emph{Fukushima decomposition}\index{Fukushima decomposition}) for functions in $D(\mathcal E)$ is given. This decomposition is given in terms of additive functionals. They have properties that naturally generalize the properties of the corresponding objects in the classical semimartingale decomposition to the $\mathcal E$-quasi-everywhere setting in Dirichlet forms. More precisely, for the $\mathcal E$-quasi-continuous version $\widetilde{u}$ of $u \in D(\mathcal E)$ it holds
\begin{align*}
\widetilde{u}(\mathbf X_t) - \widetilde{u}(\mathbf X_0) = N^{[u]}_t + M^{[u]}_t, \quad t \ge 0,
\end{align*}
where $N^{[u]}$ and $M^{[u]}$ are finite CAFs (not necessarily strict) having certain properties for $\mathcal E$-quasi-every point, i.e., except for a set of capacity zero. In particular, $M^{[u]}$ is a square-integrable martingale under $\mathbb P_x$ for $\mathcal E$-quasi-every starting point. The process $N^{[u]}$ is of zero energy, see \eqref{DefNc} below.
Under additional assumptions on $u$ these results are refined to pointwise statements there, in particular the martingale property holds for every point.

We apply \cite[Theo.~5.2.4]{FOT11} and \cite[Theo.~5.2.3]{FOT11} to identify $N^{[u]}$ and $M^{[u]}$, respectively, $u \in C^2_c(E_1)$. Using methods of \cite[Theo.~5.2.5]{FOT11} combined with an additional analysis we deduce a \textit{pointwise} Skorokhod decomposition for $u \in C^2_c(E_1)$, formulated as a classical semimartingale decomposition.

The process $N^{[u]}$ contains an integral w.r.t.~the deterministic time scale $t$ and an integral w.r.t.~the local time. The latter shows then the reflection at the boundary. Due to the singular drift terms we have to take special care of integrability issues, these are solved using the $\mathcal L^p$-strong Feller property of the resolvent, see e.g.~Theorem \ref{TheoremLpIntegraldt} below.

In order to construct the local time at $\Gamma_2 \cap \{ \varrho > 0 \}$ we need a suitable generalized nest of compact sets. This nest will be also used later in the localization technique to prove existence of weak solutions.
Define
\begin{align}
U_n := \left \{ \varrho > \frac{1}{n} \right \} \cap \left \{ x \in \Omclo \, | \, \dist(x,\partial \Om \setminus \Gamma_2) > \frac{1}{n} \right \} \cap B_n(0), n \in \N, \label{EqDefUnLocaltime}
\end{align}
and $K_n := \overline{U_n}$. Here $B_n(0) \subset \R^d$ denotes the open ball of radius $n$ around $0$.
Then $K_n$ is compact and $U_n \subset K_n \subset U_{n+1}$. Since $\Gamma_2$ is assumed to be open in $\partial \Om$, we get
\begin{align*}
E_1 = \bigcup_{n \in \N} U_n = \bigcup_{n \in \N} K_n.
\end{align*}
Since $(U_n)_{n \in \N}$ increases to $E_1$, we have
\begin{align*}
\inf_{n \in \N} \capE(K \setminus U_n) = \capE \bigg( K \setminus \bigcup_{n \in \N} U_n \bigg) = 0
\end{align*}
for every compact set $K \subset \Omclo$ by \cite[Theo.~2.1.1]{FOT11}.
Hence also $\limn \capE(K \setminus K_n)=0$.
So $(K_n)_{n \in \N}$ is a generalized compact nest and associated with $\mu$ since $\mu(K_n) < \infty$ for all $n \in \N$.
Using right-continuity of $(\mathbf X_t)_{t \ge 0}$ at $t=0$ and a similar argument as in the proof of \cite[Lem.~3.7]{BGS13} we get
\begin{align*}
\mathbb P_x \left (\limn \tau_n \ge \lifetime \right) = 1 \ \text{for every} \ x \in E_1
\end{align*}
with $\tau_n$ being the exit time of $K_n$, $n \in \N$.

The results of the last section give that for every measure in $S_{00}$ we get a unique strict finite PCAF. We apply this result to construct strict finite PCAF corresponding to $\sigma_n := 1_{K_n} \, \sigma$, $n \in \N$. 

So we have to show that these measures are of finite energy and that the corresponding $\alpha$-potential is essentially bounded. As mentioned in the introduction we follow the strategy to identify the $\alpha$-potential (locally) as the weak solution to an elliptic PDE with sufficiently regular right-hand-side. In the proof Sobolev space theorems and our elliptic regularity results (see \cite{BG13}) play a crucial role. Our results yield in fact that the potential has a continuous bounded version on $E_1$. 

Since we consider later also measures of the form $f \, 1_{K_N} \, \sigma$, $N \in \N$, with $f \in \mathcal B_b^+(E_1)$, we formulate a more general theorem.
\begin{theorem} \label{TheoFiniteEnergyIntegral}
Let $N \in \N$, $f \in \mathcal B^+_b(E_1)$. Then the measure $f \, \sigma_N (= f \, 1_{K_N} \, \sigma)$ is of finite energy, see Definition \ref{DefFiniteEnergy}.
For $\alpha > 0$ the corresponding potential $U_\alpha f \, \sigma_N$ has a continuous bounded version $\widetilde{U_\alpha f \, \sigma_N}$ on $E_1$, in particular $U_\alpha f \, \sigma_N$ is essentially bounded. 
Hence $f \, \sigma_N \in S_{00}$.
\end{theorem}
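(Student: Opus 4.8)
The plan is to reduce the statement to the elliptic regularity result of \cite{BG13} by identifying $U_\alpha(f\,\sigma_N)$ with the weak solution of an appropriate elliptic boundary value problem on a fixed neighborhood of $K_N$. First I would fix a relatively compact open set $V$ with $K_N \subset\subset V \subset\subset E_1$ and choose a cutoff $\cutoff \in C^\infty_c(V)$ with $\cutoff \equiv 1$ on a neighborhood of $K_N$. The first task is to check that $f\,\sigma_N \in S_0$, i.e. that
\begin{align*}
D(\mathcal E) \cap C_c(E_1) \ni v \mapsto \int_{E_1} v \, f \, d\sigma_N = \int_{\Gamma_2 \cap K_N} v \, f \, d\sigma
\end{align*}
is $\mathcal E_1$-continuous. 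Since $f$ is bounded, it suffices to bound $\int_{\Gamma_2 \cap K_N} |v| \, d\sigma$ by $C\,\|v\|_{\mathcal E_1}$. Here I would invoke the trace theorem for Sobolev functions on the $C^2$-smooth boundary part $\Gamma_2$: on $V$ the measure $\mu = \varrho\,dx$ is comparable to Lebesgue measure (since $\varrho$ is continuous and bounded below by a positive constant on the compact set $\overline V \subset E_1$) and $A$ is uniformly elliptic there, so $\mathcal E_1$-boundedness on functions supported near $K_N$ controls the $H^{1,2}(V)$-norm, and the boundary trace operator $H^{1,2}(V) \to L^2(\partial V \cap \Gamma_2, \sigma)$ is continuous. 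This gives $f\,\sigma_N \in S_0$ and hence the existence of $U_\alpha(f\,\sigma_N) \in D(\mathcal E)$ with $\mathcal E_\alpha(U_\alpha(f\,\sigma_N), v) = \int v\, f\, d\sigma_N$ for all $v \in D(\mathcal E)$.

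Next I would show the potential solves an elliptic PDE. Writing $w := U_\alpha(f\,\sigma_N)$ and unravelling $\mathcal E_\alpha(w,v) = \int_\Om (A\nabla w, \nabla v)\,d\mu + \alpha \int_\Om w\,v\,d\mu = \int_{\Gamma_2} v\,f\,d\sigma$ for test functions $v$ supported in $V$, one sees that $w$ is a weak solution on $V$ of the equation $-\,\mathrm{div}(A\,\varrho\,\nabla w) + \alpha\,\varrho\,w = f\,\varrho_\sigma\,\mathcal H^{d-1}|_{\Gamma_2}$ in the distributional sense (a Neumann-type problem with a surface-measure right-hand side on the boundary piece, and, on $V \cap \Om$, simply $-\,\mathrm{div}(A\,\varrho\,\nabla w) + \alpha\,\varrho\,w = 0$). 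The right-hand side, being a bounded density against the surface measure on a $C^2$ hypersurface, lies in the dual Sobolev space $H^{-1,q}$ locally for a suitable $q$; more precisely, by the Sobolev trace estimate the functional $v \mapsto \int_{\Gamma_2} v f\,d\sigma$ extends continuously to $H^{1,p'}$ for $p'$ the conjugate exponent of the $p$ from Condition \ref{CondDiffDensity}, so that the right-hand side has exactly the regularity required to feed into the elliptic regularity theorem of \cite{BG13}. Applying that theorem on $V$ (using $A \in C^1(\Omclo)$, $\sqrt\varrho \in H^{1,2}_{\mathrm{loc}}$, $|\nabla\varrho|/\varrho \in L^p_{\mathrm{loc}}$ and the local $C^2$-smoothness of $\Gamma_2$) yields that $w$ has a version in $C(\overline V \cap (\Om \cup \Gamma_2))$, hence a continuous version $\widetilde{U_\alpha f\,\sigma_N}$ near $K_N$; away from $K_N$ continuity on $E_1$ follows since there the right-hand side vanishes and standard interior/boundary elliptic regularity (again from \cite{BG13}, now with zero right-hand side) applies. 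The H\"older and Sobolev regularity mentioned in the introduction come out of the same elliptic estimate.

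Finally, boundedness: on the compact nest element, continuity already gives local boundedness; to get a global bound on all of $E_1$ I would use that $w = U_\alpha(f\,\sigma_N)$ is $\alpha$-excessive and that, by the maximum principle for the resolvent (or directly: $0 \le w \le \alpha^{-1}\|f\|_\infty \cdot (\text{something})$ via comparison), $w$ is dominated by a multiple of $U_\alpha(\sigma_N)$ which in turn, having compactly supported Revuz measure of finite total mass $\sigma(K_N) < \infty$ on the $C^2$ piece, is bounded; concretely $\|w\|_\infty \le \|R_\alpha(f\,\sigma_N)\|_\infty$ and the continuous version attains its sup on the compact support-neighborhood, so $\widetilde{U_\alpha f\,\sigma_N}$ is bounded on $E_1$. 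Together with $f\,\sigma_N(E_1) = \int_{\Gamma_2 \cap K_N} f\,d\sigma < \infty$, this shows $f\,\sigma_N \in S_{00}$.

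\emph{Main obstacle.} The crux is the second step: establishing that the surface-measure right-hand side $f\,1_{K_N}\,\sigma$ has precisely the dual-Sobolev regularity that the elliptic regularity theorem of \cite{BG13} demands, and verifying its hypotheses are met near the $C^2$ boundary part $\Gamma_2$ in the presence of the singular coefficient $|\nabla\varrho|/\varrho$. This requires carefully matching the Sobolev trace exponent to the integrability exponent $p > d/2$ from Condition \ref{CondDiffDensity}, and handling the interface between the interior estimate and the boundary estimate across $\Gamma_2$ — everything else (the $S_0$-membership, the $\alpha$-excessivity, the final boundedness bookkeeping) is comparatively routine.
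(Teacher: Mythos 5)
Your proposal follows essentially the same route as the paper: trace theorem on the $C^2$ part $\Gamma_2$ near $K_N$ to get finite energy, identification of $U_\alpha f\,\sigma_N$ as a weak solution of the elliptic equation whose right-hand side is the surface-measure functional viewed in a dual Sobolev space, the elliptic regularity theorem of \cite{BG13} plus Sobolev embedding for the continuous version, and a maximum principle for the global bound (your ``sup is attained near the support of the measure'' is exactly the weak maximum principle \cite[Lem.~2.2.4]{FOT11} the paper invokes; the detour through domination by $U_\alpha\sigma_N$ is circular and unnecessary). The one step you should fix is the exponent in the key regularity step: you propose to view $v\mapsto\int_{\Gamma_2}vf\,d\sigma$ as an element of $(H^{1,p'})'$ with $p'$ conjugate to the $p$ of Condition \ref{CondDiffDensity}; since Condition \ref{CondDiffDensity} only guarantees $p>\frac d2$, feeding $L^p$ data with $p\le d$ into \cite[Theo.~4.4]{BG13} yields only $H^{1,p}_{\text{loc}}$ regularity, which does not embed into continuous bounded functions. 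The correct move (and the paper's) is to note that the trace operator $H^{1,q'}(U_k\cap\Om)\to L^{q'}(\Gamma_2\cap K_N,\sigma_N)$ is continuous for \emph{every} $q'\ge 1$, so one is free to choose $q>d$, represent the functional via \cite[Theo.~3.8]{AD75} as $v\mapsto\int g\,v\,dx+\int\sum_i e_i\,\pdi v\,dx$ with $g,e_i\in L^q$, and then apply \cite[Theo.~4.4]{BG13} with that $q$; the exponent $p$ of Condition \ref{CondDiffDensity} constrains the coefficients, not the right-hand side. With that correction your outline matches the paper's proof.
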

\begin{proof}
Let $N \in \N$ and $f \in \mathcal B^+_b(E_1)$.
By \cite[Lem.~4.1(ii)]{BG13} the restriction map $\iota : D(\mathcal E) \to H^{1,2}(U_{N+1} \cap \Om)$, $ v \mapsto v|_{U_{N+1} \cap \Om}$ is well-defined and continuous. So for $v \in D(\mathcal E) \cap C_c(E)$, it holds $v \in H^{1,2}(U_{N+1} \cap \Om)$ and $Tr(v) \in L^2(\GammaN, \sigma_N)$ with $\GammaN := \partial \Om \cap K_N$ and $Tr: H^{1,2}(U_{N+1} \cap \Om) \to L^2(\GammaN,\sigma_N)$ the trace operator, see e.g.~\cite[Ch.~V, Theo.~5.22]{AD75}.
We have the estimate
\begin{align*}
\Vert Tr(v) \Vert_{L^2(\GammaN, \sigma_N)} \le K_1 \Vert v \Vert_{H^{1,2}(U_{N+1} \cap \Om)} \le K_2 \Vert v \Vert_{\mathcal E_1}
\end{align*}
for constants $K_1, K_2 < \infty$.
Because $v \in C_c(E)$ it holds $Tr(v) = v$ $\sigma$-a.e.
Thus
\begin{align*}
\int_{\GammaN} |v| \ f d \sigma_N = \int_{\GammaN} |Tr(v)| \ f d \sigma_N \le \sigma(\GammaN)^{1/2} \Vert f \Vert_{\sup} \Vert Tr(v) \Vert_{L^2(\GammaN,\sigma_N)} \\
\le K_2 \sigma(\GammaN)^{1/2} \Vert f \Vert_{\sup} \Vert v \Vert_{\mathcal E_1}.
\end{align*}

So $f \, \sigma_N$ is of finite energy with $\alpha$-potential $U_\alpha f \, \sigma_N \in D(\mathcal E)$ for $0 < \alpha < \infty$. Thus $U_\alpha f \, \sigma_N \in H^{1,2}(U_k \cap \Om)$ for every $k \in \N$.

Next we show that $U_\alpha f \, \sigma_N$ has a bounded continuous version on $E_1$. Fix $0 < \alpha < \infty$. Let $k \in \N$ with $k \ge N+1$.
Since $Tr : H^{1,q^*}(U_k \cap \Om) \to L^{q^*}(\GammaN,\sigma_N)$ is continuous for $ 1 \le q^* < \infty$, the mapping $T: H^{1,q^*}(U_k \cap \Om) \to \R$, $v \mapsto \int_{\GammaN} Tr(v) \ f d \sigma_N$ is a continuous linear functional.
Choose $2 \le d < q < \infty$ such that $q' := \frac{q}{q-1} > 1$ and $q' < \infty$. We have
\begin{multline*}
\hspace{10pt} \alpha \int_{U_k} U_\alpha f \, \sigma_N \, v \, d \mu + \int_{U_k} (A \nabla U_\alpha f \, \sigma_N,\nabla v) \, d \mu =
\mathcal E_\alpha(U_\alpha f \, \sigma_N,v) \\
= \int_{\GammaN} v f d \sigma_N = T(v) \quad \text{for all} \ v \in C^1_c(U_k). \hspace{20pt}
\end{multline*}
Since $T \in (H^{1,q'}(U_k \cap \Om))'$, there exist $g \in L^q(U_k,dx)$ and a vector-valued mapping $(e_i)_{1 \le i \le d} \in (L^q)^d(U_k,dx)$ such that for all $v \in H^{1,q'}(U_k \cap \Om)$
\begin{align*}
\int_{\GammaN} Tr(v) f d \sigma_N = T(v) = \int_{U_k} g \, v \, dx + \int_{U_k} \sum^d_{i=1} e_i \, \pdi v \, dx.
\end{align*}
Moreover, $\Vert g \Vert_{L^{q}(U_k,dx)} + \Vert e \Vert_{(L^q)^d(U_k,dx)} \le \tilde{C} \Vert T \Vert_{(H^{1,q'}(U_k \cap \Om))'}$ for some $\tilde{C} < \infty$, see \cite[Theo.~3.8]{AD75}.
So $U_\alpha f \, \sigma_N$ solves
\begin{equation*}
 \alpha \int_{U_k} \hspace{-3pt}  U_\alpha f \, \sigma_N \, v \, d \mu + \int_{U_k}\hspace{-2pt}  (A \nabla U_\alpha f \, \sigma_N,\nabla v) \, d \mu 
=  \int_{U_k} \hspace{-5pt} g \, v \, dx +  \int_{U_k} \sum_{i=1}^d e_i \, \pdi v \, dx \hspace{-5pt} \quad \text{for all} \, v \in  C^1_c(U_k).
\end{equation*}
Choose $0 < r < \infty$ in the following way: If $x \in U_k \cap \Om$, choose $r$ such that $B_r(x) \subset U_k \cap \Om$. If $x \in U_k \cap \partial \Om$, choose $r$ such that $B_r(x) \cap \overline{\Om} \subset U_k$ and $B_r(x) \cap \partial \Om \subset \Gamma_2$. Note that in both cases $C^1_c(B_r(x) \cap \Omclo)$ embeds into $C^1_c(U_k)$.
Applying \cite[Theo.~4.4]{BG13} with $p=q$ we get $0 < r' < r$ such that $U_\alpha f \, \sigma_N \in H^{1,q}(B_{r'}(x) \cap \Om)$. 
Choosing $0 < r_1 < r'$ we get by Sobolev embedding that $U_\alpha f \, \sigma_N$ has a continuous bounded version $\widetilde{U_\alpha f \, \sigma_N}$ on $B_{r_1}(x) \cap \Omclo$. 

Since $K_n \subset U_{N+1 \maxw n+1}$ and $K_n$ is compact, $U_\alpha f \, \sigma_N$ has a bounded continuous version on every $K_n$, $n \in \N$. Thus there exists a continuous version $\widetilde{U_\alpha f \, \sigma_N}$ on $E_1$. 

However, the function is only locally bounded. To prove the global boundedness we apply a weak maximum principle, see \cite[Lem.~2.2.4]{FOT11}. 

Choose $n > N$, let $M_n := \sup_{x \in K_n} |\widetilde{U_\alpha f \, \sigma_N \ (x)}| < \infty$. Since $\supp[f \, \sigma_N] \subset \supp[\sigma_N] \subset \GammaN \subset K_n$, it holds $|\widetilde{U_\alpha f \, \sigma_N(x)}| \le M_n$ $\sigma_N$-a.e. By the weak maximum principle we get $\widetilde{U_\alpha \sigma_N} \le M_n$ $\mu$-a.e.~hence by continuity everywhere on $E_1$. Thus $\widetilde{U_\alpha \sigma_N}$ is bounded on $E_1$ and $\Vert U_\alpha f \, \sigma_N \Vert_{L^\infty(E,\mu)} < \infty$.
\flushright
\end{proof}

Applying Theorem \ref{TheoFiniteEnergyIntegral} with $f=1$ we get by Theorem \ref{TheoremConstrTildeS} the following corollary.
\begin{corollary} \label{LemmaLocaltimeCompactBoundary}
For each $n \in \N$ there exists a unique strict finite PCAF corresponding to $1_{K_n} \, \sigma$. These we denote by $(\ell^n_t)_{t \ge 0}$, $n \in \N$.
\end{corollary}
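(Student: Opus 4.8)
The plan is to obtain the corollary by chaining together the two tools already at hand. First I would fix $n \in \N$ and apply Theorem \ref{TheoFiniteEnergyIntegral} with $N = n$ and with the choice $f \equiv 1$. Since the constant function $1$ lies in $\mathcal B^+_b(E_1)$, the theorem gives directly that $\sigma_n = 1_{K_n}\,\sigma$ is a positive Radon measure of finite energy integral, that for every $\alpha > 0$ its $\alpha$-potential $U_\alpha \sigma_n$ admits a continuous bounded version $\widetilde{U_\alpha \sigma_n}$ on $E_1$ (so in particular $U_\alpha \sigma_n$ is essentially $\mu$-bounded), and since moreover $\sigma_n(E_1) = \sigma(\GammaN) < \infty$, we conclude $\sigma_n \in S_{00}$.

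Next I would feed this into Theorem \ref{TheoremConstrTildeS} with $\nu = \sigma_n$. Because $\sigma_n \in S_{00}$ and the associated potentials $(R_\alpha \sigma_n)_{\alpha > 0}$ are exactly the $\widetilde{U_\alpha \sigma_n}$ just produced, the theorem yields a unique finite PCAF of the restricted process $\mathbf M^1$ that is in Revuz correspondence to $\sigma_n$ and has exceptional set $E \setminus E_1$ — that is, a unique strict finite PCAF on $E_1$ in the terminology introduced right after Theorem \ref{TheoremConstrTildeS}, satisfying $\mathbb E^1_x[\int_0^\infty e^{-\alpha s}\,d\ell^n_s] = R_\alpha \sigma_n(x)$ for every $x \in E_1$. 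Denoting this functional by $(\ell^n_t)_{t \ge 0}$ proves the claim, uniqueness included, since uniqueness of the strict PCAF associated with a given $S_{00}$-measure is part of the statement of Theorem \ref{TheoremConstrTildeS}.

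There is essentially no obstacle here: every step is a direct invocation of a previously established result, and the only point worth checking — that the constant $1$ is an admissible choice of $f$ in Theorem \ref{TheoFiniteEnergyIntegral} and that $\sigma_n$ is finite — is immediate. It is, however, worth noting in passing that the measures $\sigma_n$ increase to $1_{\Gamma_2 \cap \{\varrho > 0\}}\,\sigma$ along the generalized compact nest $(K_n)_{n \in \N}$ from \eqref{EqDefUnLocaltime}, so that the family $(\ell^n)_{n \in \N}$ is precisely the localizing sequence from which the boundary local time $(\ell_t)_{t \ge 0}$ at $\Gamma_2 \cap \{\varrho > 0\}$ will be assembled.
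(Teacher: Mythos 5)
Your proposal is correct and follows exactly the paper's route: apply Theorem \ref{TheoFiniteEnergyIntegral} with $f \equiv 1$ to conclude $1_{K_n}\,\sigma \in S_{00}$, then invoke Theorem \ref{TheoremConstrTildeS} to obtain the unique strict finite PCAF $(\ell^n_t)_{t \ge 0}$. Nothing further is needed.
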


We apply the previous results to construct the local time at the boundary.

\begin{theorem} \label{TheoLocaltimeS1}
The restricted surface measure $1_{\Gamma_2} 1_{\{\varrho > 0 \}} \, \sigma$ is smooth in the strict sense. There exists a corresponding strict PCAF denoted by $(\ell_t)_{t \ge 0}$ and called the local time\index{local time} (at $1_{\Gamma_2} 1_{\{ \varrho > 0 \}}$). 
For the defining set $\Lambda$ it holds $\Lambda \subset \Lambda_n$, $n \in \N$, $\Lambda_n$ being the defining sets of $(\ell^n_t)_{t \ge 0}$ from Corollary \ref{LemmaLocaltimeCompactBoundary}. 

Let $f \in \mathcal B^+_b([0,\infty) \times E_1)$ with $\supp[f] \subset [0,\infty) \times K_m$ for some $m \in \N$. Then it holds
\begin{align*}
\int_0^\infty f(s,\mathbf X_s(\omega)) d \ell_s(\omega) = \int_0^\infty f(s,\mathbf X_s(\omega)) d \ell^m_s(\omega) \quad \text{for all} \ \omega \in \Lambda  
\end{align*}
and
\begin{align*}
\int_0^t f(s,\mathbf X_s(\omega)) d \ell_s(\omega) = \int_0^t f(s,\mathbf X_s(\omega)) d \ell^m_s(\omega) \quad \text{for all} \ \omega \in \Lambda. 
\end{align*}
\end{theorem}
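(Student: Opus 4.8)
The plan is to deduce the theorem from the construction of strict PCAFs from measures in the strict smooth class, i.e.~from \cite[Ch.~5, Sec.~1]{FOT11} applied to the restricted process $\mathbf M^1$ of Corollary~\ref{CoroRestriction}, by first showing that $\nu := 1_{\Gamma_2} 1_{\{\varrho > 0\}} \sigma$ is smooth in the strict sense (Definition~\ref{DefSmoothStrictSense}) with the compact nest $(K_n)_{n \in \N}$ as approximating sequence, and then gluing the PCAFs $(\ell^n_t)_{t \ge 0}$ of Corollary~\ref{LemmaLocaltimeCompactBoundary} into one perfect functional.

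First I would verify $\nu \in S_1$. By construction of the $U_n$, every point of $K_n \cap \partial \Om$ has distance at least $1/n$ from $\partial \Om \setminus \Gamma_2$ and satisfies $\varrho \ge 1/n$ there, so $K_n \cap \partial \Om \subset \Gamma_2 \cap \{\varrho > 0\}$; since $\sigma$ is carried by $\partial \Om$ this yields $1_{K_n} \nu = 1_{K_n} \sigma = \sigma_n$, and Theorem~\ref{TheoFiniteEnergyIntegral} with $f = 1$ gives $\sigma_n \in S_{00}$, the first requirement in Definition~\ref{DefSmoothStrictSense}. For the probabilistic requirement I would observe that for the continuous process $\mathbf M^1$ and for $t < \lifetime$ the path has left $K_n$ by time $t$ if and only if it has hit $E_1 \setminus K_n$ by time $t$, so the hitting time of $E_1 \setminus K_n$ coincides with the exit time $\tau_n$ of $K_n$ up to $\lifetime$; the relation $\mathbb P_x(\limn \tau_n \ge \lifetime) = 1$ for every $x \in E_1$ established above then gives $\mathbb P_x(\limn \sigma_{E_1 \setminus K_n} \ge \lifetime) = 1$. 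Hence $\nu \in S_1$ and a strict PCAF exists; it remains to realize it concretely so as to obtain the defining-set inclusion and the two localization identities.

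For the concrete construction I would exploit the consistency of the $\ell^n$. For $m \le n$, Lemma~\ref{LemmadfAt} applied with $A = \ell^n$, Revuz measure $\sigma_n$, $M = K_n$ and $f = 1_{K_m}$ shows that $1_{K_m} \cdot \ell^n$ is a strict finite PCAF whose $\alpha$-potential equals $R_\alpha(1_{K_m} \sigma_n) = R_\alpha \sigma_m$ for all $\alpha > 0$, i.e.~the same as that of $\ell^m$; by the uniqueness in Theorem~\ref{TheoremConstrTildeS} (equivalently Corollary~\ref{LemmaLocaltimeCompactBoundary}) the two are equivalent. Passing to a suitable common defining set $\Lambda$ — a countable, $\shiftop_t$-invariant intersection of the $\Lambda_n$, of $\{\limn \tau_n \ge \lifetime\}$, and of the $\mathbb P_x$-full sets on which $\int_0^t 1_{K_m}(\mathbf X_s)\, d\ell^n_s = \ell^m_t$ holds for all $m \le n$ and all rational $t$, extended by continuity — one gets $1_{K_m} \cdot \ell^n = \ell^m$ identically on $\Lambda$, hence $d\ell^n \le d\ell^{n+1}$ as Stieltjes measures on $\Lambda$ and $\ell^n_t$ nondecreasing in $n$. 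Set $\ell_t := \sup_n \ell^n_t$. For $\omega \in \Lambda$ and $t < \lifetime(\omega)$ there is $n_0$ with $\tau_{n_0}(\omega) > t$, so $\ell^n_t(\omega) = \ell^{n_0}_t(\omega)$ for all $n \ge n_0$ and $\ell_t(\omega) < \infty$; continuity, positivity, adaptedness and the additivity property (iv) of Definition~\ref{DefAddFunc} pass to the increasing limit on $\Lambda$, and $\Lambda \subset \Lambda_n$ for every $n$ by construction. Since $1_{K_n} \cdot \ell = \ell^n$ on $\Lambda$ (monotone convergence plus the identities just proved), $\ell$ is the strict PCAF in Revuz correspondence with $\lim_n \sigma_n = \nu$.

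Finally, for $f \in \mathcal B^+_b([0,\infty) \times E_1)$ with $\supp[f] \subset [0,\infty) \times K_m$ one has $f(s,\mathbf X_s(\omega)) = f(s,\mathbf X_s(\omega))\, 1_{K_m}(\mathbf X_s(\omega))$ for $\omega \in \Lambda$, so for every $n \ge m$
\[
\int_0^\infty f(s,\mathbf X_s)\, d\ell^n_s = \int_0^\infty f(s,\mathbf X_s)\, d(1_{K_m} \cdot \ell^n)_s = \int_0^\infty f(s,\mathbf X_s)\, d\ell^m_s ,
\]
and letting $n \to \infty$ with the monotone convergence $d\ell^n \uparrow d\ell$ gives the first asserted identity; the identity with $\int_0^t$ in place of $\int_0^\infty$ is obtained verbatim. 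I expect the only genuinely delicate step to be the perfectness bookkeeping — producing the single $\shiftop_t$-invariant defining set $\Lambda$ on which all the identities and regularity properties hold simultaneously — but this is exactly the localization argument underlying the $S_1$-construction in \cite[Ch.~5, Sec.~1]{FOT11}, here transported to $\mathbf M^1$.
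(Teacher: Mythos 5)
Your proposal is correct and follows essentially the same route as the paper: verify that $1_{\Gamma_2}1_{\{\varrho>0\}}\sigma$ is smooth in the strict sense via the nest $(K_n)_{n\in\N}$ with $\sigma_n\in S_{00}$ and the exit-time condition, obtain the consistency $1_{K_m}\cdot\ell^n=\ell^m$ from Lemma \ref{LemmadfAt} together with the uniqueness in Theorem \ref{TheoremConstrTildeS}, glue on a common defining set, and read off the integral identities. The only (cosmetic) difference is that you realize $\ell_t$ as the increasing limit $\sup_n\ell^n_t$, whereas the paper defines it piecewise on $(\tau_{n-1},\tau_n]$ following \cite[Lem.~5.1.8]{FOT11}; on the common defining set these coincide.
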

\begin{proof}
Set $\nu := 1_{\Gamma_2}  1_{\{ \varrho > 0 \}} \, \sigma$. Since $\nu(K_n) = \sigma_n(K_n) < \infty$, we have that $(K_n)_{n \in \N}$ is a generalized nest associated with $\nu$. Furthermore, $1_{K_n} \, \nu = \sigma_n \in S_{00}$ for all $n \in \N$ by Corollary \ref{LemmaLocaltimeCompactBoundary}. It is left to check that $\nu$ is smooth. Let $A \subset E$ with $\capE(A)=0$. We have
\begin{align*}
\nu(A) = \nu(A \cap \{ \varrho > 0 \} \cap \Gamma_2) = \sup_{n \in \N} \, \nu(K_n \cap A) = \sup_{n \in \N} \, \sigma_n(A)=0
\end{align*}
since $\sigma_n$ is smooth for all $n \in \N$.

So we can apply \cite[Theo.~5.1.7(i)]{FOT11} to construct a corresponding PCAF $(\ell_t)_{t \ge 0}$ on $E_1$. The functional is constructed using the local times $(\ell^n)_{n \in \N}$ at compact boundary parts from Corollary \ref{LemmaLocaltimeCompactBoundary} in the following way:
For $n$, $m \in \N$ with $n > m$ we have that $(1_{K_{m}} \cdot \ell^n )$ is in Revuz correspondence to $\sigma_m$ by Lemma \ref{LemmadfAt}, \eqref{EqUftilde}. Hence by uniqueness we get that $(1_{K_{m}} \cdot \ell^n )$ is equal to $\ell^m$. So we can find a common defining set $\Lambda_{n}$ with full $\mathbb P_x$-measure for all $x \in E_1$ such that for $\omega \in \Lambda_{n}$ and $t \ge 0$ it holds
 \begin{align*}
\ell^m_t(\omega) = (1_{K_m} \cdot \ell^n)_t \, (\omega) \quad \text{for} \ n > m.
\end{align*}
Set $\Lambda := \bigcap_{n \in \N} \Lambda_n$.
Then we define $(\ell_t)_{t \ge 0}$ by:
\begin{align*}
\ell_t(\omega)=
\begin{cases}
 \ell^{n}_t(\omega), & \tau_{n-1}(\omega) < t \le \tau_{n}(\omega), \quad n \in \N \\
 \ell_{\lifetime^{-}}(\omega), & t \ge \lifetime(\omega),
\end{cases}
\end{align*}
for $\omega \in \Lambda$ and $0$ else,
with $\tau_n$ the exit times of $K_n$, $n \in \N$, and $\tau_0 := 0$.
Following the proof of \cite[Lem.~5.1.8]{FOT11} we get that $(\ell_t)_{t \ge 0}$ is a strict PCAF being in Revuz correspondence to $\nu$.
The integral identities follow directly by definition of $(\ell_t)_{t \ge 0}$.
\flushright\end{proof}

%
%

\begin{remark}
Note that the only possibility for $l_t(\omega)$ being infinite is that $t \ge \lifetime(\omega)$ for $\omega \in \Lambda$.
\end{remark}

\begin{remark} \label{RemLocaltimeGrows}
We can conclude from the construction of $(\ell_t)_{t \ge 0}$ that the functional grows only when $\mathbf X_t$, $t \ge 0$, is at the boundary part $\Gamma_2 \cap \{ \varrho > 0 \}$.
Indeed, let $n \in \N$. Then by Theorem \ref{TheoLocaltimeS1} we get for $t \ge 0$
\begin{align*}
\int_0^t 1_{\Gamma_2 \cap \{ \varrho > 0 \}}  (\mathbf X_s) 1_{K_n} (\mathbf X_s) \, d \ell_s = \int_0^t 1_{\Gamma_2 \cap \{ \varrho > 0 \}}  (\mathbf X_s) d \ell^n_s = (1_{\Gamma_2 \cap \{ \varrho > 0 \}} \cdot \ell^n)_t. 
\end{align*}
By Lemma \ref{LemmadfAt} we get that $(1_{\Gamma_2 \cap \{ \varrho > 0 \}} \cdot \ell^n)$ is in Revuz correspondence to $1_{\Gamma_2 \cap \{ \varrho > 0 \}} 1_{K_n} \sigma = 1_{K_n} \sigma$. Thus $(1_{\Gamma_2 \cap \{ \varrho > 0 \}} \cdot \ell^n) = \ell^n$.

Altogether, we get for $t \ge 0$
\begin{align*}
\int_0^t 1_{\Gamma_2 \cap \{ \varrho > 0 \}}  (\mathbf X_s) 1_{K_n} (\mathbf X_s) \, d \ell_s = \ell^n_t.
\end{align*}
Letting $n$ tend to $\infty$ we obtain for $t \ge 0$
\begin{align*}
\int_0^t 1_{\Gamma_2 \cap \{ \varrho > 0 \}}  (\mathbf X_s) \, d \ell_s = \ell_t.
\end{align*}
Thus for $t \ge 0$
\begin{align*}
\int_0^t 1_{\Om \cup \, (\partial \Om \setminus \Gamma_2 \cap \{ \varrho > 0 \})} (\mathbf X_s) \, d \ell_s = \int_0^t 1_{\Omclo} (\mathbf X_s) \, d \ell_s -  \int_0^t 1_{ \Gamma_2 \cap \{ \varrho > 0 \}} (\mathbf X_s) \, d \ell_s = \int_0^t 1 \, d \ell_s -  \ell_t = 0. 
\end{align*}
\end{remark}

$ $\\[1.25ex]
To discuss the martingale solution property we also need to consider integration of functions on paths of the process with respect to the deterministic time. Here we can allow certain singularities. First note that $\mu$ is in Revuz correspondence to the additive functional $(\omega,t) \mapsto t$. Indeed, we have $U^\alpha_t f = R_\alpha f$ for every $f \in \mathcal B^+_b(E)$, $\alpha > 0$. Since $R_\alpha$ is symmetric, we get by \eqref{EqRevuzCorrespondence} the Revuz correspondence.

We introduce the notion of bounded variation.\index{bounded variation}\index{bounded variation!locally of}
\begin{definition} \label{DefBoundedVariation}
Let $g: \R^+ \to \R$ be a function. We say that $g$ is of \emph{bounded variation} up to time $T$, $0 \le T \le \infty$ if
\begin{align*}
\underset{n \in \N}{\sup} \left \{ \sum_{i=0}^{n-1} |g(t_{i+1}) - g(t_i)| \, \bigg | \, 0 = t_0 \le t_1 \le ... \le t_n = T \right \} < \infty.
\end{align*}

Let $(\mathbf \Om,\mathcal F,\mathbf Q)$ be a probability space. Let $\mathbf G:= (\mathbf G_t)_{t \ge 0}$ be an $\R$-valued stochastic process defined on $\mOm$. We say that $\mathbf G$ is \emph{locally of bounded variation} if for every $0 \le T < \infty$ it holds that the function $[0,\infty) \ni t \mapsto \mathbf G_t(\omega) \in \R$ is of bounded variation up to $T$ for $\mathbf Q$-a.e.~$\omega \in \mOm$.

Let $(\mathcal F_t)_{t \ge 0}$ be an filtration and $\tau$ be an $\mathcal F_t$-stopping time.
We say that $\mathbf G$ is \emph{locally of bounded variation up to} $\tau$ if there exists a sequence of $\mathcal F_t$-stopping times $(\tau_n)_{n \in \N}$ with $\tau_n \uparrow \tau$ such that $\mathbf G_{\cdot \minw \tau_n}$ is locally of bounded variation for $n \in \N$.
\end{definition}

\begin{theorem} \label{TheoremLpIntegraldt}
(i) Let $f \in L^p(E,\mu)$. Define $A := (f \cdot t) := (f \cdot t)_{t \ge 0}$ by
\begin{align}
A_t := (f \cdot t)_t := \int_0^t f(\mathbf X_s) ds. \label{EqDefNDt}
\end{align}

Then $f \cdot t$ is a strict finite CAF on $E_1$. Furthermore, $\mathbb E_x[ |A_t|] < \infty$ for every $0 \le t < \infty$, $x \in E_1$, and $f \cdot t$ is locally of bounded variation.

If $f$ is positive, $(f \cdot t)$ is in Revuz correspondence to $f \, \mu$.
\\
(ii) Let $f \in L^p(E,\mu)$, $\supp[f] \subset \subset U_n$ for one $n \in \N$, $U_n$ as in \eqref{EqDefUnLocaltime}.
Then
\begin{align*}
\mathbb E_x \left[(f \cdot t)^2_t \right] < \infty \quad \text{for} \ x \in E_1. 
\end{align*}
\\
(iii) If $f \in L^p_{\text{loc}}(E_1,\mu)$, then there exists a local strict CAF $A := (A_t)_{t \ge 0}$ that is $\mathbb P_x$-a.s.~equal to the integral in \eqref{EqDefNDt} for $t < \lifetime$. Moreover, $A$ is locally of bounded variation up to $\lifetime$. 
\end{theorem}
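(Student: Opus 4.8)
The plan is to prove the three parts in order, since (ii) and (iii) build on the additive-functional machinery set up in part (i). For part (i), the first step is to check that $A_t := \int_0^t f(\mathbf X_s)\,ds$ is well-defined and finite $\mathbb P_x$-a.s.\ for every $x\in E_1$. Here I would invoke the $\mathcal L^p$-strong Feller property of the resolvent from Theorem \ref{TheoDiffProcess}/Corollary \ref{CoroRestriction}: for $f\in L^p(E,\mu)$ and $\alpha>0$ one has, by Fubini and the absolute continuity of $(P_t)_{t>0}$ on $E_1$,
\begin{align*}
\mathbb E_x\!\left[\int_0^\infty e^{-\alpha s}|f|(\mathbf X_s)\,ds\right] = R_\alpha |f|\,(x) = \int_{E_1} r_\alpha(x,y)\,|f|(y)\,d\mu(y) < \infty \quad \text{for every } x\in E_1,
\end{align*}
where finiteness of $R_\alpha|f|(x)$ follows because $r_\alpha(x,\cdot)\in L^{p'}(E,\mu)$ locally and $|f|\in L^p$ (one uses here the same elliptic-regularity/Sobolev input that gives boundedness of resolvent densities, exactly as for the potentials of surface measures in Theorem \ref{TheoFiniteEnergyIntegral}). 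This gives $\mathbb E_x[|A_t|]<\infty$ for all $t<\infty$ after removing the factor $e^{-\alpha s}$ on compact time intervals. The additivity, continuity, and adaptedness properties (Definition \ref{DefAddFunc}) are immediate from the integral form and the flow property $\mathbf X_{s+t} = \mathbf X_s\circ\theta_t$; the defining set can be taken as the full set where the path is continuous and the integral is finite for all rational $t$, which is $\mathbb P_x$-full for every $x\in E_1$, so the CAF is strict on $E_1$. Local bounded variation is clear since $t\mapsto A_t$ is an indefinite integral. Finally, the Revuz correspondence for $f\ge 0$: since $\mu$ is in Revuz correspondence to $(\omega,t)\mapsto t$ (as noted before the theorem, using symmetry of $R_\alpha$), Lemma \ref{LemmadfAt}, formula \eqref{EqUftilde}, applied with $A_t=t$ and multiplier $f$, gives that $f\cdot t$ is in Revuz correspondence to $f\,\mu$ — modulo checking $f\mu\in S_{00}$ locally, which again uses the resolvent-density bound.

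For part (ii), the extra hypothesis $\supp[f]\subset\subset U_n$ lets me get the second moment. The plan is to write, for fixed $t$,
\begin{align*}
\mathbb E_x\!\left[A_t^2\right] = 2\,\mathbb E_x\!\left[\int_0^t \!\!\int_0^t 1_{\{r\le s\}} f(\mathbf X_r) f(\mathbf X_s)\,dr\,ds\right] = 2\,\mathbb E_x\!\left[\int_0^t f(\mathbf X_r)\,\mathbb E_{\mathbf X_r}\!\left[\int_0^{t-r} f(\mathbf X_u)\,du\right]dr\right],
\end{align*}
using the Markov property. The inner expectation is $\le \mathbb E_{\mathbf X_r}[\int_0^\infty e^{-\alpha u}f(\mathbf X_u)\,du]\,e^{\alpha t} = e^{\alpha t} R_\alpha |f|(\mathbf X_r)$, and by the compact-support assumption and the continuity/boundedness of $R_\alpha|f|$ on the compact set $\overline{U_n}$ (again from the elliptic-regularity argument as in Theorem \ref{TheoFiniteEnergyIntegral}), this is bounded by a constant on $\supp[f]$. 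Substituting back reduces $\mathbb E_x[A_t^2]$ to a constant times $\mathbb E_x[|A_t|]$, which is finite by (i). The key point is that compact support inside $U_n$ makes $R_\alpha|f|$ uniformly bounded where it is integrated against $f$.

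For part (iii), with $f\in L^p_{\mathrm{loc}}(E_1,\mu)$, the plan is a localization along the generalized compact nest $(K_n)_{n\in\N}$ from \eqref{EqDefUnLocaltime}. For each $n$ pick $f_n := 1_{U_{n+1}}\, f\in L^p(E,\mu)$ (after multiplying by a cutoff supported in $U_{n+1}$ to get $\subset\subset$), apply (i) to get a strict finite CAF $A^n$, and observe that on $\{t<\tau_n\}$ the path stays in $K_n\subset U_{n+1}$, so $A^n_t = \int_0^t f(\mathbf X_s)\,ds$ there and the $A^n$ are consistent: $A^{n+1}_{t} = A^n_{t}$ for $t<\tau_n$ on a common defining set. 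Pasting as in the proof of Theorem \ref{TheoLocaltimeS1} (following \cite[Lem.~5.1.8]{FOT11}) gives a local strict CAF $A$ with $A_t = \int_0^t f(\mathbf X_s)\,ds$ $\mathbb P_x$-a.s.\ for $t<\lifetime$, using that $\tau_n\uparrow\lifetime$ $\mathbb P_x$-a.s.\ for every $x\in E_1$. Local bounded variation up to $\lifetime$ holds because each $A^n$ is of bounded variation and $\tau_n$ is a reducing sequence of stopping times. The main obstacle throughout is the finiteness/boundedness of the resolvent density integrated against $L^p$ functions — i.e.\ that $R_\alpha|f|$ is finite for \emph{every} $x\in E_1$ and locally bounded — which is the genuinely new analytic ingredient; once that is in hand (it rests on the same elliptic regularity results from \cite{BG13} used in Theorem \ref{TheoFiniteEnergyIntegral}, now with the reference measure $\mu$ in place of a surface measure), the additive-functional bookkeeping is routine.
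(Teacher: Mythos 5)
Your overall route is the same as the paper's: part (i) rests on the bound $\mathbb E_x[\int_0^T |f|(\mathbf X_s)\,ds] \le e^{\alpha T} R_\alpha |f|(x) < \infty$ for every $x \in E_1$, part (ii) on the Markov-property estimate reducing the second moment to $\mathbb E_x[\int_0^t |f|\,R_\alpha|f|\,(\mathbf X_s)\,ds]$ with $R_\alpha|f|$ bounded on the compact support of $f$, and part (iii) on localization along the nest $(K_n)$ with pasting and $\tau_n \uparrow \lifetime$. Two points, however, need correction. First, your derivation of the Revuz correspondence in (i) does not go through as stated: you want to apply Lemma \ref{LemmadfAt}, formula \eqref{EqUftilde}, with the base functional $A_t = t$ and Revuz measure $\mu$, but that lemma assumes the base measure lies in $S_{00}$ (so in particular is finite with essentially bounded potential), that the multiplier is bounded, and that $f\mu \in S_{00}$; none of these holds for a general $f \in L^p(E,\mu)$, and "checking $f\mu \in S_{00}$ locally" does not yield the global identity you need. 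The paper instead verifies the correspondence directly from the characterization \eqref{EqRevuzCorrespondence}: one computes $U^\alpha_{f\cdot t}\,g = R_\alpha(fg)$ for $g \in \mathcal B^+(E_1)$ and then uses the symmetry of $R_\alpha$ in $L^2(E,\mu)$; this one-line argument avoids any $S_{00}$ hypothesis and is the fix you should adopt.

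Second, your justification for the finiteness and local boundedness of $R_\alpha|f|$ is misplaced. You argue via "$r_\alpha(x,\cdot) \in L^{p'}(E,\mu)$ locally" and a fresh elliptic-regularity argument "as in Theorem \ref{TheoFiniteEnergyIntegral}"; local dual integrability of the kernel does not control the integral of $r_\alpha(x,\cdot)|f|$ over all of $E$ when $f$ is not compactly supported, so that deduction is not valid as written. No new analytic input is needed here: the $\mathcal L^p$-strong Feller property of the resolvent, already provided by Theorem \ref{TheoDiffProcess} and Corollary \ref{CoroRestriction}, says precisely that $R_\alpha$ maps $\mathcal L^p(E,\mu)$ into $C(E_1)$, whence $R_\alpha|f|(x) < \infty$ for every $x \in E_1$ and $R_\alpha|f|$ is continuous, hence bounded on the compact set $\overline{U_n}$ needed in (ii). (Elliptic regularity enters only through the proof of that standing property in the earlier work, not as a new step here, and Theorem \ref{TheoFiniteEnergyIntegral} concerns surface measures, not the resolvent applied to $L^p$ functions.) With these two repairs your argument coincides with the paper's proof; the remaining bookkeeping (defining set, shift invariance, adaptedness, bounded variation, and the pasting in (iii)) is handled as you describe.
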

\begin{proof}
(i):  
Note that the $\mathcal L^p$-strong Feller property implies $R_1 g \, (x) < \infty$ for $x \in E_1$ and $g \in \mathcal L^p(E,\mu)$. So for $x \in E_1$
\begin{align*}
\mathbb E_x \left[\int_0^T |f|(\mathbf X_r) d r \right] \le e^T \mathbb E_x \left[ \int_0^\infty e^{-r} |f|(\mathbf X_r) dr \right] = e^T R_1 |f| \,(x) < \infty.
\end{align*}
So $\mathbb E_x[ \int_0^T |f| (\mathbf X_r) d r] < \infty$ and $\int_0^T |f|(\mathbf X_r) dr < \infty$ $\mathbb P_x$-a.s.~for $x \in E_1$ and $0 \le T < \infty$. 
This holds for every $0 \le T < \infty$. 
Define
\begin{align*}
\Lambda_f := \bigg \{ \omega \in \mathbf \Om^1 \, | \, \int_0^N |f|(\mathbf X_r(\omega)) dr < \infty \ \text{for all} \ N \in \N \bigg \}
\end{align*}
and for $t \ge 0$
\begin{align*}
(f \cdot t)_t \, := 
\begin{cases}
\int_0^t f(\mathbf X_s(\omega)) \, d s & \quad \text{if} \ \omega \in \Lambda_f \\
0 & \quad \text{else}.
\end{cases}
\end{align*}
Note that $\Lambda_f$ is shift-invariant and $\mathbb P_x(\Lambda_f)=1$ for $x \in E_1$. Then we get with a standard calculation that $f \cdot t$ defines a finite PCAF with defining set $\Lambda_f$.

For every partition $0 = t_0 \le t_1 \le ... \le t_n = t < \infty$, $n \in \N$ we have
\begin{align*}
\sum_{i=0}^{n-1} \left| \int_{t_{i}}^{t_{i+1}} f(\mathbf X_s) \, ds \, \right| \le \int_0^t |f(X_s)| \, ds < \infty \quad \mathbb P_x-\text{a.s. for} \, x \in E_1.
\end{align*}
So $f \cdot t$ is locally of bounded variation.

Assume that $f$ is positive. Let $g \in \mathcal B^+(E_1)$. We have 
\begin{multline*}
U^\alpha_{f \cdot t} g \, (x) = \mathbb E_x \left[\int_0^\infty \exp(-\alpha s) \, g(\mathbf X_s) \, d (f \cdot t)_s \right] \\
= \mathbb E_x \left[\int_0^\infty \exp(-\alpha s) \, (f g)(\mathbf X_s) \, ds \right] = R_\alpha f g \, (x) \quad \text{for} \ \alpha > 0 \ \text{and} \ x \in E_1.
\end{multline*}
Since $(R_\alpha)_{\alpha > 0}$ is symmetric, we have for all $h \in \mathcal B_b^+(E_1)$ and $\alpha > 0$:
\begin{align*}
\int_{E_1} h \, (U^{\alpha}_{f \cdot t} g) \, d \mu = \int_{E_1} h \, R_{\alpha} f  g \, d \mu =  \int_{E_1} R_{\alpha} h \, g f \, d \mu.
\end{align*}
So by \eqref{EqRevuzCorrespondence} we get that $(f \cdot t)$ is in Revuz correspondence to $f \mu$.
\\
(ii): With a similar calculation as in \cite[p.~245]{FOT11} we get for $0 \le t < \infty$ and $x \in E_1$
\begin{align*}
\mathbb E_x \left[ (f \cdot t)^2_t \right] \le 2 e^t \mathbb E_x \left[\int_0^t |f|(\mathbf X_s) \, R_1 |f|(\mathbf X_s) \, d s \right].
\end{align*}
Set $h(x) := |f| R_1|f| \, (x)$. Since the resolvent is $\mathcal L^p$-strong Feller on $E_1$ (see Theorem \ref{TheoDiffProcess}), we have that $R_1 |f|$ is continuous and hence locally bounded on $E_1$. In particular, $R_1 |f|$ is bounded on the compact support of $f$. Thus $h \in L^p(E,\mu)$. So as above $\mathbb E_x[\int_0^t h(\mathbf X_s) ds] < \infty$ for $0 \le t < \infty$ and $x \in E_1$.
\\
(iii): Now assume that $f \in L^p_{\text{loc}}(E_1,\mu)$. Set $f_n = 1_{K_n} f$, $n \in \N$, $K_n$ as in \eqref{EqDefUnLocaltime}. Define $A^{f_n}$ to be the corresponding additive functional from $\eqref{EqDefNDt}$. So $A^{f_n}$ is a continuous additive functional with defining set $\Lambda_{f_n}$.
Let $\Lambda := \bigcap_{n \in \N} \Lambda_{f_n} \cap \{ \omega \in \mathbf \Om^1 \, | \, \lim_{n \to \infty} \tau_{n} \ge \lifetime \}$, $\tau_n$ the exit time of $K_n$, $n \in \N$.
Define
\begin{align*}
A_t := 1_{\Lambda} 1_{\{t < \lifetime\}} \, \limn A^{f_n}_t, t \ge 0. 
\end{align*}
Let $t > 0$ and $\omega \in \Lambda \cap \{ t < \lifetime \}$. There exists $n_0 \in \N$ such that $t < \tau_{n_0} < \lifetime$. Then 
\begin{align*}
\limn A^{f_n}_t(\omega) = A^{f_{n_0}}_t(\omega).
\end{align*}
So $(A_t)_{t \ge 0}$ is well-defined and $\mathcal F^1_t$-adapted. 

Since $A^f(\omega)$ equals $A^{f_{n_0}}(\omega)$ on $[0,t]$ for $t < \tau_{n_0}$, it is therefore continuous and additive on $[0,t]$.
So $A^f$ is a local strict CAF. Furthermore, $(f \cdot t)_{\cdot \minw \tau_n} = (f_n \cdot t)_{\cdot \minw \tau_n}$, $n \in \N$. So $(f \cdot t)_{t}$ is locally of bounded variation up to $\lifetime$.
\flushright
\end{proof}

\begin{lemma} \label{LemmaLocaltimeIntegral}
Let $g$ be $\mathcal B(\Gamma_2)$-measurable and bounded, $\supp[g] \subset \subset U_n$, for one $n \in \N$.
Then $g \cdot \ell$, defined by
\begin{align*}
(g \cdot \ell)_t := \int_0^t g(\mathbf X_s) d \ell_s, \quad t \ge 0,
\end{align*}
is a strict finite CAF with defining set of $\ell$ and it holds
\begin{align}
\mathbb E_x \left[|(g \cdot \ell)_t| \right] < \infty \ \text{and} \ \mathbb E_x \left[(g \cdot \ell)^2_t \right] < \infty \quad \text{for} \ 0 \le t < \infty \ \text{and} \ x \in E_1. \label{EqSquareNells}
\end{align}
Furthermore, $g \cdot \ell$ is locally of bounded variation.
Assume that $g \in \mathcal B(\Gamma_2)$ is only locally bounded. Then $g \cdot \ell$ is a local strict CAF and locally of bounded variation up to $\lifetime$.
\end{lemma}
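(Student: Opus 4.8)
The plan is to reduce everything to the compactly supported functionals $(\ell^n_t)_{t \ge 0}$ from Corollary \ref{LemmaLocaltimeCompactBoundary} and then to apply Lemma \ref{LemmadfAt}. First I would extend $g$ by zero outside $\Gamma_2$ to a bounded $\mathcal B(E_1)$-measurable function, still denoted $g$, with $\supp[g] \subset \subset U_n \subset K_n$, so that $g = 1_{K_n}g$. Writing $g = g^+ - g^-$ with $g^\pm \in \mathcal B^+_b(E_1)$ and $\supp[g^\pm] \subset \subset U_n$, Theorem \ref{TheoLocaltimeS1} applied to the functions $(s,x) \mapsto g^\pm(x)$ (which are supported in $[0,\infty) \times K_n$) gives $\int_0^t g^\pm(\mathbf X_s) \, d\ell_s = \int_0^t g^\pm(\mathbf X_s) \, d\ell^n_s$ for every $\omega$ in the defining set $\Lambda$ of $\ell$; subtracting, $(g \cdot \ell)_t = (g \cdot \ell^n)_t$ on $\Lambda$ for every $t \ge 0$. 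This identity reduces the whole lemma to properties of $g \cdot \ell^n$.

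Next I would recall that, by Corollary \ref{LemmaLocaltimeCompactBoundary} together with Theorem \ref{TheoFiniteEnergyIntegral} (applied with $f \equiv 1$), $\ell^n$ is a strict finite PCAF whose Revuz measure $\sigma_n = 1_{K_n}\sigma$ lies in $S_{00}$, with $\supp[\sigma_n] \subset \partial\Om \cap K_n \subset K_n$. Hence Lemma \ref{LemmadfAt}, with $A = \ell^n$, $M = K_n$ and the bounded Borel function $g$, shows that $g \cdot \ell^n$ is a strict finite CAF with the same defining set $\Lambda_n$ as $\ell^n$ and that $\mathbb E_x[|(g \cdot \ell^n)_t|] < \infty$, $\mathbb E_x[(g \cdot \ell^n)^2_t] < \infty$ for all $0 \le t < \infty$ and $x \in E_1$. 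Since $\Lambda \subset \Lambda_n$ by Theorem \ref{TheoLocaltimeS1}, $\Lambda$ is shift-invariant, and $\mathbb P_x(\Lambda) = 1$ for all $x \in E_1$, the functional $g \cdot \ell$ (which agrees with $g \cdot \ell^n$ on $\Lambda$) is itself a strict finite CAF with defining set $\Lambda$, and it inherits the moment bounds \eqref{EqSquareNells}. For the bounded variation I would use that for $\omega \in \Lambda$ the map $s \mapsto \ell^n_s(\omega)$ is non-decreasing and continuous, so that $(g \cdot \ell^n)_\cdot = (g^+ \cdot \ell^n)_\cdot - (g^- \cdot \ell^n)_\cdot$ is a difference of non-decreasing continuous functionals; equivalently, for any partition $0 = t_0 \le \dots \le t_k = t$ one has $\sum_i |(g \cdot \ell)_{t_{i+1}} - (g \cdot \ell)_{t_i}| \le \int_0^t |g|(\mathbf X_s) \, d\ell_s = \int_0^t |g|(\mathbf X_s) \, d\ell^n_s \le \Vert g \Vert_{\sup} \, \ell^n_t < \infty$ for every $t < \infty$ on $\Lambda$, which gives that $g \cdot \ell$ is locally of bounded variation.

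For $g \in \mathcal B(\Gamma_2)$ only locally bounded I would localize exactly as in the proof of Theorem \ref{TheoremLpIntegraldt}(iii). Setting $g^{(n)} := 1_{K_n}g$, which is bounded (as $g$ is bounded on compact subsets of $\Gamma_2$) with $\supp[g^{(n)}] \subset K_n \subset \subset U_{n+1}$, the first part of the proof gives that each $g^{(n)} \cdot \ell$ is a strict finite CAF that is locally of bounded variation. I would then define $A_t := 1_{\Lambda} \, 1_{\{t < \lifetime\}} \, \limn (g^{(n)} \cdot \ell)_t$ for $t \ge 0$: for $\omega \in \Lambda$ and $t < \lifetime(\omega)$ there is, using $\Lambda \subset \{\limn \tau_n \ge \lifetime\}$, an index $n_0$ with $t < \tau_{n_0}(\omega) < \lifetime(\omega)$, hence $\mathbf X_s(\omega) \in K_{n_0}$ for $s \le t$ and $(g^{(n)} \cdot \ell)_t(\omega) = (g^{(n_0)} \cdot \ell)_t(\omega)$ for all $n \ge n_0$; so $A$ is well-defined, $\mathcal F^1_t$-adapted, and on $[0,\tau_{n_0})$ coincides with the strict continuous additive functional $g^{(n_0)} \cdot \ell$, whence $A$ is a local strict CAF. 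Finally $A_{\cdot \minw \tau_n} = (g^{(n)} \cdot \ell)_{\cdot \minw \tau_n}$ for each $n$, so $A$ is locally of bounded variation up to $\lifetime$.

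I do not expect a genuine obstacle here: the substance is carried by Theorem \ref{TheoLocaltimeS1}, Corollary \ref{LemmaLocaltimeCompactBoundary} and Lemma \ref{LemmadfAt}. The step needing most care is the bookkeeping around the defining sets — the passage from a function defined on $\Gamma_2$ to a bounded Borel function on $K_n$ (which is why one extends $g$ by zero) and the verification that the defining set may be taken to be that of $\ell$ rather than that of $\ell^n$ — but this is immediate from $\Lambda \subset \Lambda_n$ and the shift-invariance and full measure of $\Lambda$.
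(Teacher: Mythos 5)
Your proposal is correct and follows essentially the same route as the paper's proof: extend $g$ trivially by zero, use Theorem \ref{TheoLocaltimeS1} to identify $g \cdot \ell$ with $g \cdot \ell^n$, invoke Lemma \ref{LemmadfAt} (with Corollary \ref{LemmaLocaltimeCompactBoundary}) for the strict finite CAF property and the moment bounds, obtain bounded variation as in Theorem \ref{TheoremLpIntegraldt}, and localize as in Theorem \ref{TheoremLpIntegraldt}(iii) for locally bounded $g$. You merely spell out the bookkeeping (decomposition $g = g^+ - g^-$, the inclusion $\Lambda \subset \Lambda_n$, the stopping-time argument) that the paper leaves implicit.
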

\begin{proof}
First extend $g$ to a function in $\mathcal B(E)$ in the trivial way, i.e., replace $g$ by $1_{\Gamma_2} g$. From Theorem \ref{TheoLocaltimeS1} we get $g \cdot \ell = g \cdot \ell^n$. So by Lemma \ref{LemmadfAt} we get that $g \cdot \ell$ is a strict finite PCAF on $E_1$ and \eqref{EqSquareNells} holds.
That $g \cdot \ell$ is locally of bounded variation, follows similarly as in the proof of Theorem \ref{TheoremLpIntegraldt}.
The statements for $g$ being only locally bounded follow now with the same localizing procedure as in the proof of Theorem \ref{TheoremLpIntegraldt}(iii).\flushright\end{proof}

Let us introduce two classes of functionals, according to \cite{FOT11} but refined to pointwise properties.
The Skorokhod decomposition is formulated in terms of these classes.\index{$\mathcal M_c$}
Define
\begin{multline*}
\hspace{30pt}\mathcal M_c := \bigg \{ M : \mathbf \Om^1 \times [0,\infty) \to \R \, \bigg | \, M = (M_t)_{t \ge 0} \ \text{is a strict finite CAF},\\
\, \mathbb E_x[M^2_t] < \infty, 
\mathbb E_x[M_t] = 0 \ \text{for every} \ t \ge 0 \ \text{and} \ x \in E_1 \bigg \}\hspace{30pt}
\end{multline*}
and\index{$\mathcal N_c$}
\begin{multline}
\hspace{30pt}
\mathcal N_c := \bigg \{ N : \mathbf \Om^1 \times [0,\infty) \to \R \, \bigg | \, N = (N_t)_{t \ge 0} \ \text{is a strict finite CAF}, \\ e(N)=0, 
\mathbb E_x[|N_t|] < \infty \ \text{for every} \ t \ge 0 \ \text{and} \ x \in E_1 \bigg \}\hspace{15pt} \label{DefNc}
\end{multline}
with\index{$e(N)$}
\begin{align*}
e(N) := \underset{t \downarrow 0}{\lim} \frac{1}{2t} \mathbb E_\mu[N^2_t].
\end{align*}
The term $e(N)$ is called the \emph{energy} of $N$.
Note that the properties required in $\mathcal M_c$ and $\mathcal N_c$ are pointwise properties except for the zero energy requirement.

If $M \in \mathcal M_c$, then additivity together with $\mathbb E_x[M_t]=0$ imply that $M$ is a martingale under $\mathbb P_x$ for every $x \in E_1$.
Recall the definition of the operator $\Ltil$ on $\DNeu$. We may extend this definition to all functions $u \in C^2_c(E_1)$ and define
\begin{align}
\Ltil u = \sum^d_{i,j=1} a_{ij} \pdi \pdj u + \sum^d_{j=1} \left(\sum^d_{i=1} \pdi a_{ij} + \sum^d_{i=1} \frac{1}{\varrho} a_{ji} \pdi \varrho \right) \pdj u. \label{EqWidetildeLC2c}
\end{align}

We obtain the following theorem using \cite[Theo.~5.2.4]{FOT11} and \cite[Theo.~5.2.5]{FOT11}.
\begin{theorem} \label{TheoMartingaleWithLocalTime}

Let $u \in C^2_c(E_1)$. Let $N^{[u]}:=(N_t^{[u]})_{t \ge 0}$ with
\begin{align*}
N^{[u]}_t := \int_0^t \Ltil u \, (\mathbf X_s) \, d s -\int_0^t (A \nabla u, \unitnormal) \varrho \ (\mathbf X_s) d \ell_s, \quad t \ge 0.
\end{align*}
Then $N^{[u]} \in \mathcal N_c$ and is locally of bounded variation. Define $M^{[u]}:=(M_t^{[u]})_{t \ge 0}$ with
\begin{align*}
M^{[u]}_t := u(\mathbf X_t) - u(\mathbf X_0) - N^{[u]}_t.
\end{align*}
Then $M^{[u]} \in \mathcal M_c$, in particular it is an square-integrable $\mathcal F^1_t$-martingale starting at zero.
\end{theorem}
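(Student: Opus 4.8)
The plan is to derive the decomposition from the Fukushima decomposition of $u$ applied to the restricted process $\mathbf M^1$, and to identify its zero‑energy part $N^{[u]}$ explicitly through a signed smooth measure. First I would observe that $u \in C^2_c(E_1) \subset \mathcal D \subset D(\mathcal E)$: the support of $u$ is a compact subset of $E_1 = (\Om\cup\Gamma_2)\cap\{\varrho>0\}$, hence contained in some $K_m$, and on $K_m$ the matrix $A$ is $C^1$ and $\varrho$ is continuous and bounded away from $0$, so $\mathcal E(u,u) = \int_\Om (A\nabla u,\nabla u)\,d\mu < \infty$; moreover $u$ is continuous and thus its own $\mathcal E$-quasi-continuous version. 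Since $\mathbf M^1$ is $\mathcal L^p$-strong Feller with state space $E_1$, its semigroup $(P_t)_{t>0}$ is absolutely continuous on all of $E_1$ (Corollary \ref{CoroRestriction}); this is precisely the hypothesis under which the $\mathcal E$-quasi-everywhere Fukushima decomposition of \cite[Ch.~5]{FOT11} can be upgraded to a statement that holds $\mathbb P_x$-a.s.\ for \emph{every} $x \in E_1$, with strict continuous additive functionals, via \cite[Theo.~5.2.3,~5.2.4,~5.2.5]{FOT11}.

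Next I would compute $\mathcal E(u,\cdot)$. A weak Gauss--Green identity on $\supp[u]$ --- where the boundary is locally $C^2$, $A \in C^1$ and $\varrho$ carries the Sobolev regularity of Condition \ref{CondDiffDensity} --- gives, for all $v \in D(\mathcal E)\cap C_c(E_1)$,
\[
\mathcal E(u,v) = \int_\Om (A\nabla u,\nabla v)\,d\mu = -\int_\Om v\,\Ltil u\,d\mu + \int_{\Gamma_2\cap\{\varrho>0\}} Tr(v)\,(\unitnormal,A\nabla u)\,\varrho\,d\sigma ,
\]
the boundary term extending only over $\Gamma_2\cap\{\varrho>0\}$ because $u$ vanishes near $\partial\Om\setminus\Gamma_2$, and the trace being controlled by $\Vert v\Vert_{\mathcal E_1}$ as in the proof of Theorem \ref{TheoFiniteEnergyIntegral}. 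The point requiring care here is that $\varrho$ is not $C^1$: the identity $\text{div}(\varrho A\nabla u) = \varrho\,\Ltil u$ must be read weakly, using $\nabla\varrho = 2\sqrt\varrho\,\nabla\sqrt\varrho$, which together with $\tfrac{|\nabla\varrho|}{\varrho}\in L^p_{\mathrm{loc}}(\Omclo\cap\{\varrho>0\},\mu)$ makes $\varrho\,\Ltil u$ belong to $L^p$ on $\supp[u]$. Setting
\[
\nu := -\,\Ltil u\cdot\mu + (\unitnormal,A\nabla u)\,\varrho\cdot 1_{\Gamma_2\cap\{\varrho>0\}}\,\sigma ,
\]
both (finite, compactly supported) parts of $\nu$ are smooth --- the $\mu$-part because $\Ltil u \in L^p(E,\mu)$, the $\sigma$-part by Theorem \ref{TheoFiniteEnergyIntegral} --- and $\mathcal E(u,v) = \int v\,d\nu$ for $v \in D(\mathcal E)\cap C_c(E_1)$, hence for all $v \in D(\mathcal E)$ by density.

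Then I would apply \cite[Theo.~5.2.4]{FOT11} together with its absolute-continuity refinement: the part $N^{[u]}$ in the Fukushima decomposition of $u$ is of bounded variation and, as a strict continuous additive functional on $E_1$, coincides with $-A^\nu$, where $A^\nu$ is the strict continuous additive functional with Revuz measure $\nu$. By Theorem \ref{TheoremLpIntegraldt}(i) the strict continuous additive functional with Revuz measure $\Ltil u\cdot\mu$ is $t\mapsto\int_0^t \Ltil u(\mathbf X_s)\,ds$, and by Lemma \ref{LemmaLocaltimeIntegral} the one with Revuz measure $(\unitnormal,A\nabla u)\varrho\cdot 1_{\Gamma_2\cap\{\varrho>0\}}\sigma$ is $t\mapsto\int_0^t (\unitnormal,A\nabla u)\varrho\,(\mathbf X_s)\,d\ell_s$ --- legitimate since $(\unitnormal,A\nabla u)\varrho$ is a bounded Borel function on $\Gamma_2$ with support $\subset \subset U_n$ for some $n$. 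By linearity and uniqueness of the Revuz correspondence, $-A^\nu$ is exactly the functional $N^{[u]}$ defined in the statement; it lies in $\mathcal N_c$, inheriting the zero-energy property from the decomposition, it is locally of bounded variation by Theorem \ref{TheoremLpIntegraldt}(i) and Lemma \ref{LemmaLocaltimeIntegral}, and $\mathbb E_x[|N^{[u]}_t|]<\infty$ for all $x\in E_1$ by the same two results.

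Finally, the Fukushima decomposition (in the strict sense on $E_1$) exhibits $M^{[u]}_t = u(\mathbf X_t) - u(\mathbf X_0) - N^{[u]}_t$ as a martingale additive functional in the strict sense; in particular $M^{[u]}$ is a strict finite continuous additive functional and a martingale under $\mathbb P_x$ for every $x \in E_1$, so $\mathbb E_x[M^{[u]}_t]=0$. For square-integrability it suffices to note that $u(\mathbf X_t)-u(\mathbf X_0)$ is bounded by $2\Vert u\Vert_\infty$ while $\mathbb E_x[(N^{[u]}_t)^2]<\infty$ for every $x\in E_1$ and $t\ge 0$ by Theorem \ref{TheoremLpIntegraldt}(ii) (the $ds$-integral, with $\Ltil u\in L^p$ of support $\subset \subset U_n$) and Lemma \ref{LemmaLocaltimeIntegral} (the $d\ell$-integral); hence $\mathbb E_x[(M^{[u]}_t)^2]<\infty$ and $M^{[u]}\in\mathcal M_c$. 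I expect the main obstacle to be the second and third steps: making the weak Gauss--Green identity and the resulting representation $\mathcal E(u,\cdot)=\int\cdot\,d\nu$ rigorous in spite of the low regularity of $\varrho$ and the mere $H^{1,2}_{\mathrm{loc}}$-regularity of the test functions $v$, and then justifying that the quasi-everywhere Fukushima decomposition genuinely passes to an everywhere-on-$E_1$ statement --- which is exactly where the $\mathcal L^p$-strong Feller property of $\mathbf M^1$ and the membership of $\nu$ (and of the energy measure of $u$) in $S_{00}$ enter.
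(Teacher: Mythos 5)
Your skeleton overlaps substantially with the paper's proof: you show $u\in D(\mathcal E)$, derive the Gauss--Green identity $\mathcal E(u,v) = -\int_\Om \Ltil u\, v\, d\mu + \int_{\partial\Om} Tr(v)\,(A\nabla u,\unitnormal)\varrho\, d\sigma$ on a neighbourhood of $\supp[u]$, build the two integral functionals from Theorem \ref{TheoremLpIntegraldt} and Lemma \ref{LemmaLocaltimeIntegral}, and get square-integrability of $M^{[u]}$ from boundedness of $u$ plus Theorem \ref{TheoremLpIntegraldt}(ii) and \eqref{EqSquareNells} --- all of this is what the paper does. The genuine gap is the step you yourself flag at the end and then leave to a citation: you pass from the quasi-everywhere Fukushima decomposition to a statement valid under $\mathbb P_x$ for \emph{every} $x\in E_1$ by invoking an ``absolute-continuity refinement'' of \cite[Theo.~5.2.4/5.2.5]{FOT11} identifying $N^{[u]}$ with the strict CAF $-A^\nu$. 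That strict-sense theorem carries hypotheses you never verify: the representing signed measure must lie in the strict classes (roughly, $1_{K_n}|\nu|\in S_{00}$ along a nest, i.e.\ bounded potentials), whereas you only argue that the two parts of $\nu$ are \emph{smooth}. For the boundary part this is Theorem \ref{TheoFiniteEnergyIntegral}, but for the drift part $(\Ltil u)^{\pm}\mu$ no global potential bound is available off the shelf: $\Ltil u$ contains the singular term $\frac{1}{\varrho}a_{ji}\pdi\varrho\,\pdj u$ and is merely an $L^p$-function with compact support. Without that verification, neither the claim that the functional you wrote down is a version of the zero-energy part under every $\mathbb P_x$, nor the martingale property of $M^{[u]}$ for every starting point in $E_1$ --- which is the whole point of the theorem --- is justified. (The paper's remark that it uses ``methods of \cite[Theo.~5.2.5]{FOT11} combined with an additional analysis'' signals precisely that a direct citation does not suffice here.)

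The paper closes this gap by a different mechanism, which you would need to reproduce or replace: from \cite[Theo.~5.2.4]{FOT11} it extracts only the $\mu$-a.e.\ identity $\mathbb E_x[N^{[u]}_t]=P_tu(x)-u(x)$ (after checking the required limit criterion with $v=R_1h$ through the Revuz correspondence of $(\Ltil u)^{\pm}\cdot t$ and $(A\nabla u,\unitnormal)^{\pm}\varrho\cdot\ell$), and then upgrades it to every $x\in E_1$ by applying $P_s$ to both sides and letting $s\to0$: the right-hand side converges because $P_tu-u$ is bounded continuous on $E_1$ and the paths are right-continuous at $0$, and the left-hand side converges by the pointwise convergence \eqref{EqConvergenceS} of Lemma \ref{LemmadfAt}, which rests on the $\mathcal L^p$-strong Feller property. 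The martingale property under every $\mathbb P_x$ then follows from additivity of $M^{[u]}$ and the Markov property. To repair your argument, either carry out such a semigroup-smoothing/limit step, or actually prove $1_{K_n}(\Ltil u)^{\pm}\mu\in S_{00}$ (e.g.\ via continuity of $R_1|\Ltil u|$ on $E_1$ from the $\mathcal L^p$-strong Feller resolvent together with a weak maximum principle as in Theorem \ref{TheoFiniteEnergyIntegral}) and check the remaining hypotheses of the strict decomposition theorem you wish to cite; as written, the decisive pointwise step is asserted rather than proved.
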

The integrals are defined in the sense of Theorem \ref{TheoremLpIntegraldt} and Lemma \ref{LemmaLocaltimeIntegral}.
As defining set $\Lambda$ for $M^{[u]}$ we take the intersection of the defining sets of $\widetilde{L}u \cdot t$ and $\ell$. Due to Theorem \ref{TheoremLpIntegraldt} and Lemma \ref{LemmaLocaltimeIntegral} $M^{[u]}$ is additive on this set and $\mathbb P_x(\Lambda)=1$ for every $x \in E_1$.


\begin{proof}
Let $u \in C^2_c(E_1)$.
From Lemma \ref{LemmaLocaltimeIntegral} and Theorem \ref{TheoremLpIntegraldt} together with the calculations on \cite[p.~244]{FOT11} we have $N^{[u]} \in \mathcal N_c$ and $N^{[u]}$ is locally of bounded variation.

Choose $K_N$ such that $\supp[u] \subset \subset U_N \subset K_N$.
Let $v \in D(\mathcal E)$. Then $v \in H^{1,2}(U_N \cap \Om)$. Since the support of $u$ has positive distance to the non-smooth boundary part of $\partial \Om$, we can apply the divergence theorem to obtain
\begin{align*}
\mathcal E(u,v) = \int_{\Om} (A \nabla u,\nabla v) d \mu = - \int_{\Om} \Ltil u \, v \, d \mu + \int_{\partial \Om} Tr(v) (A \nabla u, \unitnormal) \varrho d \sigma.
\end{align*}

Set either $g := (A \nabla u, \unitnormal)^+ \varrho$ or $g := (A \nabla u, \unitnormal)^- \varrho$.

Note that $g \cdot \ell = g \cdot \ell^N$ is in Revuz correspondence to $g \sigma$. By Theorem \ref{TheoremLpIntegraldt} we have that $(\Ltil u \cdot t)_t = \int_0^t \Ltil u \, (\mathbf X_s) \, ds$, $t \ge 0$, is a strict finite CAF. Moreover, $(\Ltil u)^{+/-} \cdot t$ is in Revuz correspondence to $(\Ltil u)^{+/-} \mu$.

Let $h \in L^1(E,\mu) \cap \mathcal B^+_b(E)$, set $v := R_1 h \in D(\mathcal E) \cap C(E_1)$.
So the Revuz correspondence implies by \cite[Theo.~5.1.3(vi)]{FOT11}
\begin{multline*}
\hspace{10pt}\lim_{t \to 0} \frac{1}{t} \mathbb E_{v \mu} \left[ (\Ltil u \cdot t)_{t} - ((A \nabla u,\unitnormal) \varrho \cdot \ell_t)_{t} \right] 
= \int_{\Om} \Ltil u \, v \, d \mu 
- \int_{\partial \Om} Tr(v) (A \nabla u,\unitnormal) \varrho d \sigma\hspace{-5pt} =\hspace{-5pt} - \mathcal E(u,v).\hspace{10pt}
\end{multline*} 

Thus from \cite[Theo.~5.2.4]{FOT11} we obtain
\begin{align}
\mathbb E_x \left[ N^{[u]}_t \right ] = P_t u(x) - u(x) \quad \text{for} \ \mu\text{-a.e.} \ x \in \Omclo. \label{EqNtPtuMuae} 
\end{align} 
Using the absolute continuity of $(P_t)_{t > 0}$ on $E_1$ we get
\begin{align*}
P_s \mathbb E_\cdot \left[ N^{[u]}_t \right ] \, (x) = P_s (P_t u - u) \, (x) \quad \text{for every} \ x \in E_1. 
\end{align*} 
The right-hand side converges to $P_t u(x) - u(x)$, as $s \to 0$, for every $x \in E_1$. This follows since $P_t u - u$ is a continuous bounded function on $E_1$ and the paths of $\mathbf M^1$ are right-continuous at zero.
Observe that $\Ltil u \cdot t = (\Ltil u)^+ \cdot t - (\Ltil u)^- \cdot t$, the analogous property holds for $(A \nabla u,\unitnormal) \varrho \cdot \ell$. Applying \eqref{EqConvergenceS} in Lemma \ref{LemmadfAt} with $f=1$ and $A = (\Ltil u)^{+/-}$ or $(A \nabla u,\unitnormal)^{+/-} \varrho \cdot \ell$ we get convergence of the left-hand side.
So altogether, we get that \eqref{EqNtPtuMuae} holds for every $x \in E_1$.

Using the Markov property of $\mathbf M^1$ we get from this that $M^{[u]}$ is a martingale starting at zero.
Since $u$ is bounded and $\mathbb E_x[(\Ltil u \cdot t)^2_t]$, $\mathbb E_x[(g \cdot \ell)^2_t] < \infty$ for $0 \le t < \infty$ and $x \in E_1$, we have that $M^{[u]}$ is square-integrable for every $x \in E_1$.
\flushright\end{proof}

Theorem \ref{TheoMartingaleWithLocalTime} yields that $M_t^{[u]} := u(\mathbf X_t) - u(\mathbf X_0) - N^{[u]}_t$ is a continuous square-integrable martingale. Hence $M^{[u]} \in \mathcal M_c$.
Next we further analyze this martingale by considering the quadratic variation process. We introduce the notion of local martingales, see \cite[Ch.~I, Def.~5.15]{KS91}.\index{martingale!local continuous}

\begin{definition} \label{DefLocalMartingale}
Let $(\mathbf \Om,\mathcal F,\mathbf Q)$ be a probability space with filtration $(\mathcal F_t)_{t \ge 0}$. We say that a stochastic process $M = (M_t)_{t \ge 0}$ is a \emph{continuous local martingale} up to a $\mathcal F_t$-stopping time $\tau$ if there exists a sequence of $\mathcal F_t$-stopping times $(\tau_n)_{n \in \N}$ with $\tau_n \uparrow \tau$, $\tau_n < \tau$ such that for every $n \in \N$ the stopped process $M^{\tau_n} = (M_{t \minw \tau_n})_{t \ge 0}$ is a continuous $\mathcal F_t$-martingale.
We say that $(\tau_n)_{n \in \N}$ \emph{reduces} $M$.
\end{definition}

From \cite[Ch.~I, Theo.~5.13 and Prob.~5.17]{KS91} we get the following theorem.
\begin{theorem} \label{TheoQuadraticVariation}
Let $(M_t)_{t \ge 0}$ be a local martingale up to a stopping time $\tau$ starting at zero that is continuous in $[0,\tau)$. Then there exists an adapted process $\sharpbracket{M} = (\sharpbracket{M}_t)_{t \ge 0}$, unique up to time $\tau$, the \textit{quadratic variation process},\index{quadratic variation} with the following properties.
\begin{sequence}
\item[(i)] $\sharpbracket{M}_0 = 0$ and $\sharpbracket{M}$ is increasing.
\item[(ii)] $\sharpbracket{M}$ is continuous in $[0,\tau)$.
\item[(iii)] $(M^2_{t} - \sharpbracket{M}_t)_{t \ge 0}$ is a local martingale up to time $\tau$. 
\end{sequence}

If $(M_t)_{t \ge 0}$ is square-integrable and continuous in $[0,\infty)$, then $(M^2_{t} - \sharpbracket{M}_t)_{t \ge 0}$ is a martingale.\index{martingale!square-integrable}
\end{theorem}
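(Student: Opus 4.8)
The plan is to deduce the theorem from the classical construction of the quadratic variation of a continuous square-integrable martingale, \cite[Ch.~I, Theo.~5.13]{KS91}, by localizing along a reducing sequence in the spirit of \cite[Ch.~I, Prob.~5.17]{KS91}. First I would fix, by Definition \ref{DefLocalMartingale}, a sequence of $\mathcal F_t$-stopping times $(\tau_n)_{n \in \N}$ with $\tau_n \uparrow \tau$, $\tau_n < \tau$, such that each $M^{\tau_n} := (M_{t \minw \tau_n})_{t \ge 0}$ is a continuous $\mathcal F_t$-martingale, and refine it to $\sigma_n := \tau_n \minw \inf\{ t \ge 0 : |M_{t \minw \tau_n}| \ge n \}$. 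Because $M$ is continuous on $[0,\tau)$, each path of $M$ is bounded on every compact subinterval of $[0,\tau)$; this yields that $\sigma_n$ is a stopping time with $\sigma_n \le \tau_n < \tau$, that $\sigma_n \uparrow \tau$, and that $M^{\sigma_n}$ is a \emph{bounded} continuous martingale, hence square-integrable. Applying \cite[Ch.~I, Theo.~5.13]{KS91} to each $M^{\sigma_n}$ yields a unique continuous increasing adapted process $\sharpbracket{M^{\sigma_n}}$ with $\sharpbracket{M^{\sigma_n}}_0 = 0$ such that $(M^{\sigma_n})^2 - \sharpbracket{M^{\sigma_n}}$ is a martingale.

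Next I would glue these pieces consistently. For $m \le n$ one has $(M^{\sigma_n})^{\sigma_m} = M^{\sigma_m}$, and by optional stopping $\big( (M^{\sigma_n})^2 - \sharpbracket{M^{\sigma_n}} \big)^{\sigma_m}$ is again a martingale; hence by the uniqueness part of \cite[Ch.~I, Theo.~5.13]{KS91}, $\sharpbracket{M^{\sigma_n}}_{\cdot \minw \sigma_m} = \sharpbracket{M^{\sigma_m}}$ up to indistinguishability. Discarding the single null set on which consistency fails, one may set $\sharpbracket{M}_t := \sharpbracket{M^{\sigma_n}}_t$ whenever $t \le \sigma_n$; since $\bigcup_{n \in \N} [0,\sigma_n] = [0,\tau)$ this is unambiguous and defines an adapted process satisfying (i) and (ii), while (iii) holds because $(\sigma_n)_{n \in \N}$ reduces $M^2 - \sharpbracket{M}$ (indeed $(M^2 - \sharpbracket{M})^{\sigma_n} = (M^{\sigma_n})^2 - \sharpbracket{M^{\sigma_n}}$).

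For uniqueness up to $\tau$, if $A$ and $A'$ both satisfy (i)--(iii), then $A - A' = (M^2 - A') - (M^2 - A)$ is a continuous local martingale up to $\tau$ starting at zero whose paths, being differences of continuous increasing functions, are of locally bounded variation on $[0,\tau)$; applying along a reducing sequence the standard fact that a continuous local martingale with paths of locally bounded variation and starting at zero is identically zero (\cite[Ch.~I]{KS91}) gives $A = A'$ on $[0,\tau)$. The last assertion is then immediate: if in addition $M$ is square-integrable and continuous on all of $[0,\infty)$, then \cite[Ch.~I, Theo.~5.13]{KS91} applies directly to $M$ itself, so $M^2 - \sharpbracket{M}$ is a genuine martingale and not merely a local one.

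The main obstacle is the bookkeeping in the localization, in particular verifying that the refined stopping times $\sigma_n$ still increase to $\tau$ --- this rests precisely on the continuity of $M$ on $[0,\tau)$, since any $a < \tau$ lies below $\sigma_n$ once $n$ exceeds $\sup_{s \le a} |M_s|$, which is finite by continuity on the compact interval $[0,a]$ --- and that the pathwise gluing of the $\sharpbracket{M^{\sigma_n}}$ genuinely produces a process that is continuous and increasing throughout $[0,\tau)$ rather than only up to $\sup_{n} \sigma_n$. Everything else is a direct invocation of \cite[Ch.~I, Theo.~5.13 and Prob.~5.17]{KS91}.
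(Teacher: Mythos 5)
Your proposal is correct and follows essentially the same route as the paper, which simply invokes \cite[Ch.~I, Theo.~5.13 and Prob.~5.17]{KS91}; your localization via the stopping times $\sigma_n$, the consistency/gluing step, and the uniqueness argument are exactly the standard content behind that citation. The only implicit point, shared with the paper's statement, is that the final assertion reads ``square-integrable'' as ``square-integrable martingale'' (as in $\mathcal M_2$ of \cite{KS91}), which is how it is used in Theorem \ref{TheoremMartingaleEnergyMeasure}.
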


So for $u \in C^2_c(E_1)$, we get for $M^{[u]}$ an associated quadratic variation process $\sharpbracket{M^{[u]}}$. Note that $\sharpbracket{M^{[u]}}$, obtained from Theorem \ref{TheoQuadraticVariation}, is constructed for each $x \in E_1$ separately since we consider the measurable space endowed with the different probability measures $\mathbb P_x$, $x \in E_1$.
Following \cite[Theo.~A.3.17]{FOT11}, however, we can construct from this a process $\sharpbracket{M^{[u]}}$ that is a strict additive functional with common defining set for all $x \in E_1$. 

Using \cite[Theo.~5.2.3]{FOT11} we get an explicit representation for $\sharpbracket{\Mu}$.
\begin{theorem} \label{TheoremMartingaleEnergyMeasure}
Let $u \in C^2_c(E_1)$.
Then $\sharpbracket{\Mu} = 2 (A \nabla u,\nabla u) \cdot t$, i.e.,
\begin{equation*}
\Muts{u}{t} - 2 \int_0^t (A\nabla u,\nabla u)(\mathbf X_s) ds \quad \text{is an} \ \mathcal F^1_t\text{-martingale under} \, \mathbb P_x \ \text{for every} \ x \in E_1.
\end{equation*}
\end{theorem}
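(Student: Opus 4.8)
The plan is to identify $\sharpbracket{\Mu}$ through the energy measure of $u$, and then to pass from a $\mu$-almost-everywhere statement to an everywhere statement on $E_1$, in the spirit of the proof of Theorem \ref{TheoMartingaleWithLocalTime}. \emph{Preliminaries.} Since $u\in C^2_c(E_1)$ and $A\in C^1(\Omclo)$, the function $g:=2(A\nabla u,\nabla u)$ is continuous, nonnegative (Condition \ref{Cond1Matrix}) and has compact support in $E_1$; hence $g$ is bounded and $g\in L^p(E,\mu)$. By Theorem \ref{TheoremLpIntegraldt}(i) the functional $g\cdot t=\big(\int_0^{\cdot}g(\mathbf X_s)\,ds\big)$ is a strict finite CAF on $E_1$, is in Revuz correspondence to $g\,\mu$, and satisfies $\mathbb E_x[(g\cdot t)_r]<\infty$ for all $x\in E_1$, $r\ge 0$. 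Extending $u$ by $0$ one checks $u\in\mathcal D\subset D(\mathcal E)$, and $u$ is bounded and continuous, hence equals its own $\mathcal E$-quasi-continuous version; by uniqueness of the Fukushima decomposition \cite[Theo.~5.2.2]{FOT11}, and since $N^{[u]}\in\mathcal N_c$ is of zero energy, the $M^{[u]}$ of Theorem \ref{TheoMartingaleWithLocalTime} coincides, for $\mathcal E$-q.e.\ starting point, with the martingale part of the strong decomposition of $u$.

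\emph{Identification of $\sharpbracket{\Mu}$ off a $\mu$-null set.} Testing against $f\in C^2_c(E_1)$ in the relation $\int_E f\,d\mu_{\langle u\rangle}=2\mathcal E(uf,u)-\mathcal E(u^2,f)$ defining the energy measure $\mu_{\langle u\rangle}$, and using the product rule for $\nabla$ and the symmetry of $A$, a short computation gives $2\mathcal E(uf,u)-\mathcal E(u^2,f)=2\int_\Om f\,(A\nabla u,\nabla u)\,\varrho\,dx=\int_E f\,g\,d\mu$; since $\mu_{\langle u\rangle}$ and $g\,\mu$ are Radon measures supported in $\supp u$, this forces $\mu_{\langle u\rangle}=g\,\mu$. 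By \cite[Theo.~5.2.3]{FOT11} the PCAF $\sharpbracket{\Mu}$ is in Revuz correspondence with $\mu_{\langle u\rangle}=g\,\mu$; as the Revuz correspondence is a bijection and $g\cdot t$ has the same Revuz measure, $\sharpbracket{\Mu}_r=(g\cdot t)_r=2\int_0^r(A\nabla u,\nabla u)(\mathbf X_s)\,ds$ holds $\mathbb P_x$-a.s.\ for $\mathcal E$-q.e.\ $x$, hence for $\mu$-a.e.\ $x\in E_1$ (sets of capacity zero being $\mu$-null). Consequently, writing $\psi_r(y):=\mathbb E_y[\Muts{u}{r}]-\mathbb E_y[(g\cdot t)_r]$ — a $\mu$-measurable function which, by Theorems \ref{TheoMartingaleWithLocalTime} and \ref{TheoremLpIntegraldt}, is the difference of two nonnegative functions that are finite on all of $E_1$ — we have $\psi_r=0$ $\mu$-a.e.\ for every $r\ge 0$.

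\emph{Upgrade to every $x\in E_1$, and conclusion.} Fix $r\ge 0$ and $s>0$. Since $\psi_r=0$ $\mu$-a.e.\ and $(P_s)_{s>0}$ has a density on $E_1\times E_1$, the two nonnegative functions $\mathbb E_\cdot[\Muts{u}{r}]$ and $\mathbb E_\cdot[(g\cdot t)_r]$ have the same image under $P_s$; by the Markov property of $\mathbf M^1$ and the additive-functional identities $M^{[u]}_r\circ\shiftop_s=M^{[u]}_{r+s}-M^{[u]}_s$ and $(g\cdot t)_r\circ\shiftop_s=\int_s^{r+s}g(\mathbf X_v)\,dv$, these two images equal $\mathbb E_x[(M^{[u]}_{r+s}-M^{[u]}_s)^2]$ and $\mathbb E_x\big[\int_s^{r+s}g(\mathbf X_v)\,dv\big]$, so these are equal for every $x\in E_1$. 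Letting $s\downarrow 0$ I pass to the limit by dominated convergence: $(M^{[u]}_{r+s}-M^{[u]}_s)^2\to\Muts{u}{r}$ $\mathbb P_x$-a.s.\ (continuity of $M^{[u]}$ and $M^{[u]}_0=0$), dominated by $4\sup_{0\le v\le r+1}(M^{[u]}_v)^2$, which lies in $L^1(\mathbb P_x)$ by Doob's $L^2$-inequality applied to the square-integrable martingale $M^{[u]}$; and $\int_s^{r+s}g(\mathbf X_v)\,dv\to (g\cdot t)_r$ $\mathbb P_x$-a.s., dominated by $\int_0^{r+1}g(\mathbf X_v)\,dv\in L^1(\mathbb P_x)$ by Theorem \ref{TheoremLpIntegraldt}(i). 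Hence $\psi_r(x)=0$ for every $x\in E_1$ and $r\ge 0$. Now, for $0\le s\le t$, decompose $M^{[u]}_t=M^{[u]}_s+M^{[u]}_{t-s}\circ\shiftop_s$ and $(g\cdot t)_t=(g\cdot t)_s+(g\cdot t)_{t-s}\circ\shiftop_s$; conditioning on $\mathcal F^1_s$, the cross term $2M^{[u]}_s\,\mathbb E_{\mathbf X_s}[M^{[u]}_{t-s}]$ vanishes because $M^{[u]}\in\mathcal M_c$, and the remaining term equals $\mathbb E_{\mathbf X_s}[(M^{[u]}_{t-s})^2-(g\cdot t)_{t-s}]=\psi_{t-s}(\mathbf X_s)=0$. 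Since moreover $\mathbb E_x[\Muts{u}{t}]+\mathbb E_x[(g\cdot t)_t]<\infty$ and the process is $\mathcal F^1_t$-adapted, this shows that $\Muts{u}{t}-2\int_0^t(A\nabla u,\nabla u)(\mathbf X_s)\,ds$ is an $\mathcal F^1_t$-martingale under $\mathbb P_x$ for every $x\in E_1$, which is the assertion.

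The step I expect to be the main obstacle is this last upgrade: one must combine the absolute continuity of the semigroup on $E_1$ with the Markov property and, above all, secure the uniform integrability needed for the limit $s\downarrow 0$ — for the martingale part via Doob's inequality, which is available precisely because $M^{[u]}$ is square-integrable under every $\mathbb P_x$, and for the drift part via the $L^p$-integrability furnished by Theorem \ref{TheoremLpIntegraldt}, which ultimately rests on the $\mathcal L^p$-strong Feller property.
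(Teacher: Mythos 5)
Your argument is correct, but it takes a genuinely different route at the decisive step. Both proofs share the same core: identify the energy measure $\mu_{\sharpbracket{u}} = 2(A\nabla u,\nabla u)\,\mu$ and invoke \cite[Theo.~5.2.3]{FOT11} to see that $\sharpbracket{\Mu}$ has this Revuz measure. The paper then finishes in one stroke: since $2(A\nabla u,\nabla u)\,\mu \in S_{00}$ (bounded density with compact support in $E_1$) and since $\sharpbracket{\Mu}$ is realized as a \emph{strict} additive functional on $E_1$ (via \cite[Theo.~A.3.17]{FOT11}, as noted before the theorem), the uniqueness in Theorem \ref{TheoremConstrTildeS} yields one set $\widetilde{\Lambda}$ with $\mathbb P_x(\widetilde{\Lambda})=1$ for all $x\in E_1$ on which $\sharpbracket{\Mu} = 2\,((A\nabla u,\nabla u)\cdot t)$ holds pathwise, and the martingale statement is immediate. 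You instead stop at the quasi-everywhere identification given by the Revuz correspondence and upgrade it by hand: transfer the $\mu$-a.e.\ identity of expectations through the absolutely continuous semigroup, rewrite $P_s\psi_r$ via the Markov property and additivity, pass to the limit $s\downarrow 0$ using Doob's $L^2$-inequality and dominated convergence (available precisely because $M^{[u]}$ is square-integrable under every $\mathbb P_x$ by Theorem \ref{TheoMartingaleWithLocalTime} and $2(A\nabla u,\nabla u)\in L^p(E,\mu)$ by Theorem \ref{TheoremLpIntegraldt}), and then verify the martingale property of $\Muts{u}{t}-2\int_0^t(A\nabla u,\nabla u)(\mathbf X_s)\,ds$ directly by conditioning. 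This is a valid and more elementary argument, and together with the uniqueness in Theorem \ref{TheoQuadraticVariation} it also recovers the pathwise identification of $\sharpbracket{\Mu}$ under each fixed $\mathbb P_x$; what the paper's strict-PCAF route buys in addition is equality of the two functionals on a single defining set independent of $x$, which is the form actually used later in the localization for Theorem \ref{TheoLocalMartingalePx}. Two small points you should make explicit: in the final conditioning step handle the cemetery convention (for $s\ge\lifetime$ both $M^{[u]}_{t-s}\circ\shiftop_s$ and the time integral vanish, so $\psi_{t-s}(\mathbf X_s)=0$ still holds), and the identification of your $M^{[u]}$ with the Fukushima martingale part for q.e.\ $x$ uses the standard fact that a martingale additive functional of zero energy vanishes --- the same gloss the paper makes when citing \cite[Theo.~5.2.3]{FOT11}.
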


\begin{proof}
From Theorem \ref{TheoMartingaleWithLocalTime} we get that $\Mu$ is a continuous square-integrable martingale and a strict finite CAF which is strict on $E_1$. The calculation in \cite[Theo.~5.2.3]{FOT11} yields that the Revuz measure of $\sharpbracket{\Mu}$ is $\mu_{\sharpbracket{u}}$.
The energy measure $\mu_{\sharpbracket{u}}$ of $u$ is given by $\mu_{\sharpbracket{u}} = 2 (A \nabla u,\nabla u) \, \mu$, see e.g.~\cite[p.~254]{FOT11}.

Thus by Theorem \ref{TheoremConstrTildeS} we find a set $\widetilde{\Lambda} \subset \mathbf \Om^1$ with $\mathbb P_x(\widetilde{\Lambda}) = 1 \, \text{for all} \ x \in E_1$ and

\begin{align*}
2 ((A\nabla u,\nabla u) \cdot t)(\omega) = \sharpbracket{\Mu}(\omega) \quad \text{for all} \ \omega \in \widetilde{\Lambda}. 
\end{align*} \flushright\end{proof}

So we get for $u \in \mathcal D_1$ the \emph{Skorokhod decomposition}
\begin{align}
u(\mathbf X_t) - u(\mathbf X_0) = \Nu_t + \Mu_t \ \text{for} \ t \ge 0 \label{EqSkoro}
\end{align}
with $N^{[u]}$, $M^{[u]}$ as in Theorem \ref{TheoMartingaleWithLocalTime} and  $\sharpbracket{\Mu}$ as in Theorem \ref{TheoremMartingaleEnergyMeasure}. In particular, $(u(\mathbf X_t))_{t \ge 0}$ is a semimartingale, see Definition \ref{DefSemimartingale} below.

In order to study the behavior of the process $(\mathbf X_t)_{t \ge 0}$ we need also information of the joint behavior of $(u_1(\mathbf X_t))_{t \ge 0}$ and $(u_2(\mathbf X_t))_{t \ge 0}$ for $u_1,u_2 \in C^2_c(E_1)$.

%

For two martingales $M^{(1)}$ and $M^{(2)}$ define the quadratic \emph{covariation process} by
\begin{align*}
\langle M^{(1)}, M^{(2)} \rangle := \frac{1}{2} \left \{ \langle M^{(1)} + M^{(2)} \rangle - \langle M^{(1)} \rangle - \langle M^{(2)} \rangle \right \}.
\end{align*}
Observe that $\sharpbracket{\alpha M^{(1)}} = \alpha^2 \sharpbracket{M^{(1)}}$ for $\alpha \in \R$. Thus $\langle M^{(1)}, M^{(1)} \rangle = \sharpbracket{M^{(1)}}$.

Using that for $u_1, u_2 \in C^2_c(E_1)$ and $\alpha, \beta \in \R$ it holds 
$\Mu[\alpha u_1 + \beta u_2] = \alpha \Mu[u_1] + \beta \Mu[u_2]$ we can calculate the covariation to be 
%
%
\begin{align*}
\langle \Mu[u_1], \Mu[u_2] \rangle = 2 ( (A \nabla u_1,\nabla u_2) \cdot t),
\end{align*}
i.e.,
\begin{align*}
\langle \Mu[u_1], \Mu[u_2] \rangle_t = 2 \int_0^t (A \nabla u_1, \nabla u_2)(X_s) \, ds \ \quad \text{for all} \ t \ge 0 \ \mathbb P_x-a.s., \ x \in E_1.
\end{align*}
%

\section{Semimartingale Structure and Weak Solutions} \label{SecSemiMartingale}
In this section we study the coordinates of the process $\mathbf X_t = (\mathbf X^{(1)}_t,...,\mathbf X^{(d)}_t)$, rather than functions of $(\mathbf X_t)_{t \ge 0}$. 
Throughout this section we keep the same setting as in the introduction. In particular, we assume Conditions \ref{Cond1Matrix}, \ref{CondContinuity}, \ref{CondDiffDensity} and \ref{Cond1DiffBoundary}.
We show that $(\mathbf X_t)_{t \ge 0}$ is a semimartingale up to the lifetime $\lifetime$. Under the assumption that the process is conservative, we prove that it yields a weak solution to an singular SDE with reflection.
So we recall the definition of semimartingales first.

\begin{definition} \label{DefSemimartingale}
Let $(\mathbf \Om,\mathcal F,\mathbf Q)$ be a probability space with filtration $(\mathcal F_t)_{t \ge 0}$. Let $(\mathbf X_t)_{t \ge 0}$ be a stochastic process. Let $\tau$ be a stopping time. We say that $(\mathbf X_t)_{t \ge 0}$ is a \emph{continuous semimartingale}\index{semimartingale} up to $\tau$ if there exists $\mathcal F_t$-adapted processes $(M_t)_{t \ge 0}$ and $(N_t)_{t \ge 0}$, continuous in $[0,\tau)$, such that
$(M_t)_{t \ge 0}$ is a local $\mathcal F_t$-martingale up to $\tau$ and $(N_t)_{t \ge 0}$ is locally of bounded variation up to $\tau$ and
\begin{align*}
\mathbf X_{t \minw \tau} = \mathbf X_0 + N_{t \minw \tau} + M_{t \minw \tau} \quad \ \text{for} \ t \ge 0 \ \mathbf Q-\text{a.s.}
\end{align*}

\end{definition}
In order to apply our previous results, we have to transfer the properties from $u(\mathbf X_t)$, $u \in C^2_c(E_1)$, to $\mathbf X^{(i)}_t$, $1 \le i \le d$. This is done using localization arguments.

For $1 \le i \le d$ define $N^{(i)}$ by
\begin{align}
N_t^{(i)} = \int_0^t b_i(\mathbf X_s) \, ds - \int_0^t (e_i, A \unitnormal) \varrho \, (\mathbf X_s) \, d \ell_s, \quad t \ge 0, \label{EqNt}
\end{align}
in the sense of Theorem \ref{TheoremLpIntegraldt}(iii) and Lemma \ref{LemmaLocaltimeIntegral} where
\begin{align}
b_i(x) := \sum^d_{j=1} \pdj a_{ij}(x) + \sum^d_{j=1} \frac{1}{\varrho} a_{ij} \pdj \varrho \, (x), \quad x \in E_1, 1 \le i \le d, \label{EqFirstOrderCoeff}
\end{align}
are the first-order coefficients of $\Ltil$ from \eqref{EqWidetildeLC2c}.
Define $M^{(i)}$, $1 \le i \le d$, by
\begin{align}
M^{(i)}_t := \mathbf X_t^{(i)} - \mathbf X^{(i)}_0 - N_t^{(i)}, \quad t \ge 0. \label{EqMt}
\end{align}
Recall that $b_i$ is locally $L^p(E_1,\mu)$-integrable and $(e_i, A \unitnormal) \varrho$ is locally bounded for $1 \le i \le d$. By $e_i$, $1 \le i \le d$, we denote the i-th unit vector.\index{$e_i$|see{unit vector}}\index{unit vector}

Denote by $\Lambda$ the defining set of the boundary local time from Theorem \ref{TheoLocaltimeS1}.
According to Theorem \ref{TheoremLpIntegraldt} and Lemma \ref{LemmaLocaltimeIntegral} all $N^{(i)}$ and $M^{(i)}$, $1 \le i \le d$, form local strict CAF and we can find a common defining set $\widetilde{\Lambda} \subset \Lambda$.

\begin{theorem} \label{TheoLocalMartingalePx}
The processes $N^{(i)}$ and $M^{(i)}$, $1 \le i \le d$, are local strict CAF.
The processes $N^{(i)}$ are locally of bounded variation up to $\lifetime$ $\mathbb P_x$-a.s.~for every $x \in E_1$.
The processes $M^{(i)}$ are continuous local $\mathcal F^1_t$-martingales up to $\lifetime$ under $\mathbb P_x$, $x \in E_1$, with reducing sequence $\ttau_n := \tau_n \minw n \minw \lifetime$ where $\tau_n$ is the exit time of $K_n$, $n \in \N$, defined after \eqref{EqDefUnLocaltime}. 
The quadratic variation and covariation processes (up to $\lifetime$) are given by
\begin{align}
\langle M^{(i)}, M^{(j)} \rangle_{\cdot \minw \lifetime} = 2 \, (a_{ij} \cdot t)_{\cdot \minw \lifetime}, \ 1 \le i,j \le d, \label{EqQuadraticVarinTheo}
\end{align}
and have the same reducing sequence.
In particular, $(\mathbf X^{(i)}_t)_{t \ge 0}$, $1 \le i \le d$, are semimartingales up to $\lifetime$ with
\begin{align*}
\mathbf X^{(i)}_{t \minw \lifetime} = \mathbf X^{(i)}_0 + N^{(i)}_{t \minw \lifetime} + M^{(i)}_{t \minw \lifetime}  
\end{align*}
for $0 \le t < \infty$  $\mathbb P_x$-a.s.~for $x \in E_1$ and $1 \le i \le d$.

There exists a set $\hat{\Lambda}$ with $\mathbb P_x(\hat{\Lambda})=1$ for $x \in E_1$ with the following properties: The set $\hat{\Lambda}$ is contained in the defining sets of $N^{(i)}$ and $M^{(i)}$ for $1 \le i \le d$. Moreover, \eqref{EqQuadraticVarinTheo} hold on $\hat{\Lambda}$ and $N^{(i)}_{t \minw \lifetime}$ is given by the defining integral of \eqref{EqNt} on $\hat{\Lambda}$. Furthermore, the paths $[0,\lifetime) \ni t \mapsto N_t^{(i)}(\omega)$, $1 \le i \le d$, are locally of bounded variation for $\omega \in \hat{\Lambda}$.
\end{theorem}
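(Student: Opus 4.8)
The plan is to transfer the Skorokhod decomposition of Theorems \ref{TheoMartingaleWithLocalTime}--\ref{TheoremMartingaleEnergyMeasure} from $C^2_c(E_1)$-functions to the coordinate maps by localizing along the compact nest $(K_n)_{n \in \N}$. For each $n$ I would fix a cut-off $\cutoff_n \in C^2_c(E_1)$ with $\cutoff_n \equiv 1$ on an open set containing $K_n$ and $\supp[\cutoff_n] \subset \subset U_{n+1}$ (possible since $K_n$ is compact and $K_n \subset U_{n+1}$, which is open in $E_1$), and set $u_n^{(i)} := \cutoff_n \, \xib{i} \in C^2_c(E_1)$. On the neighbourhood of $K_n$ where $\cutoff_n \equiv 1$ one has $u_n^{(i)} = \xib{i}$, hence $\nabla u_n^{(i)} = e_i$, and, by \eqref{EqWidetildeLC2c} together with the symmetry of $A$, $\Ltil u_n^{(i)} = b_i$ and $(A \nabla u_n^{(i)},\unitnormal)\varrho = (e_i, A\unitnormal)\varrho$ there. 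Therefore, as long as $\mathbf X$ (together with its starting point) remains in $K_n$, the functionals attached to $u_n^{(i)}$ by Theorem \ref{TheoMartingaleWithLocalTime} coincide with the processes $N^{(i)}$, $M^{(i)}$ from \eqref{EqNt}--\eqref{EqMt}: writing $\tau_n$ for the exit time of $K_n$, one gets $N^{[u_n^{(i)}]}_{t \minw \tau_n} = N^{(i)}_{t \minw \tau_n}$ and $M^{[u_n^{(i)}]}_{t \minw \tau_n} = M^{(i)}_{t \minw \tau_n}$ on a defining set of full $\mathbb P_x$-measure for every $x \in E_1$.

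Next I would patch these stopped functionals over $n$. For $m > n$ one has $u_n^{(i)}(\mathbf X_\cdot) = u_m^{(i)}(\mathbf X_\cdot) = \mathbf X^{(i)}_\cdot$ on $[0,\tau_n)$, and the integrands of $N^{[u_n^{(i)}]}$ and $N^{[u_m^{(i)}]}$ agree pointwise on $K_n$, so the stopped functionals agree on the intersection of their defining sets, and likewise for $M$. Intersecting these sets over all $n$ and $i$ with the set $\{\limn \tau_n \ge \lifetime\}$ --- which has full $\mathbb P_x$-measure for every $x \in E_1$ by the nest property recorded above when $(K_n)_{n \in \N}$ was introduced --- and with the common defining set $\widetilde\Lambda$ of $N^{(i)},M^{(i)}$ already produced via Theorem \ref{TheoremLpIntegraldt} and Lemma \ref{LemmaLocaltimeIntegral}, yields the set $\hat\Lambda$. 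On $\hat\Lambda$ the processes $N^{(i)}$ are given by the defining integral of \eqref{EqNt} and are locally of bounded variation up to $\lifetime$ (directly from Theorem \ref{TheoremLpIntegraldt}(iii) with $b_i \in L^p_{\text{loc}}(E_1,\mu)$ and Lemma \ref{LemmaLocaltimeIntegral} with $(e_i,A\unitnormal)\varrho$ locally bounded), and $\mathbf X^{(i)}_{t\minw\lifetime} = \mathbf X^{(i)}_0 + N^{(i)}_{t\minw\lifetime} + M^{(i)}_{t\minw\lifetime}$ holds by definition of $M^{(i)}$. Since $\ttau_n := \tau_n \minw n \minw \lifetime \le \tau_n \le \tau_m$ for $m \ge n$, one has $M^{(i)}_{\cdot\minw\ttau_n} = M^{[u_n^{(i)}]}_{\cdot\minw\ttau_n}$, which is a stopped square-integrable $\mathcal F^1_t$-martingale by Theorem \ref{TheoMartingaleWithLocalTime}, hence a martingale; as $\ttau_n \uparrow \lifetime$ on $\hat\Lambda$, the sequence $(\ttau_n)$ reduces $M^{(i)}$ in the sense of Definition \ref{DefLocalMartingale}, so $M^{(i)}$ is a continuous local $\mathcal F^1_t$-martingale up to $\lifetime$ under every $\mathbb P_x$, $x \in E_1$, and $(\mathbf X^{(i)}_t)_{t \ge 0}$ is a semimartingale up to $\lifetime$.

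For the (co)variation I would use the energy-measure computation. By Theorem \ref{TheoremMartingaleEnergyMeasure} and the covariation identity derived after it, $\langle M^{[u_n^{(i)}]}, M^{[u_n^{(j)}]} \rangle_t = 2\int_0^t (A\nabla u_n^{(i)},\nabla u_n^{(j)})(\mathbf X_s)\,ds$; evaluating at $t \minw \tau_n$ and using $\nabla u_n^{(i)} = e_i$ on $K_n$ gives $\langle M^{(i)},M^{(j)}\rangle_{\cdot\minw\tau_n} = 2(a_{ij}\cdot t)_{\cdot\minw\tau_n}$ by uniqueness of the quadratic (co)variation up to $\tau_n$ (Theorem \ref{TheoQuadraticVariation}), and letting $n \to \infty$ yields \eqref{EqQuadraticVarinTheo} up to $\lifetime$. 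Realizing these covariation processes as strict additive functionals sharing the defining set $\hat\Lambda$ is done exactly as in the discussion preceding Theorem \ref{TheoremMartingaleEnergyMeasure}, via \cite[Theo.~A.3.17]{FOT11}. Shrinking $\hat\Lambda$ once more so that it is contained in the defining sets of all these functionals and all the above pathwise identities hold on it completes the proof.

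I expect the only real difficulty to be bookkeeping rather than conceptual: one must ensure that the identities $N^{[u_n^{(i)}]}_{\cdot\minw\tau_n} = N^{(i)}_{\cdot\minw\tau_n}$, the consistency $M^{[u_n^{(i)}]}_{\cdot\minw\tau_n} = M^{[u_m^{(i)}]}_{\cdot\minw\tau_n}$, the covariation formulae and the bounded-variation property all hold \emph{simultaneously} on a \emph{single} set $\hat\Lambda$ that has full $\mathbb P_x$-measure for \emph{every} $x \in E_1$; in particular the consistency across $n$ must be obtained on the intersection of the defining sets, not merely $\mathbb P_x$-a.s., so that the patched processes are genuinely additive with a common defining set, and the passage from the quadratic variations (constructed per $\mathbb P_x$ by Theorem \ref{TheoQuadraticVariation}) to a strict additive functional via \cite[Theo.~A.3.17]{FOT11} does not reintroduce an $x$-dependence.
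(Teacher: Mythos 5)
Your proposal is correct and follows essentially the same route as the paper: localize via cut-offs $u^{(n)}_i = \cutoff_n\, \xib{i}$, identify the stopped functionals $N^{[u^{(n)}_i]}_{\cdot \minw \tau_n}$, $M^{[u^{(n)}_i]}_{\cdot \minw \ttau_n}$ with $N^{(i)}$, $M^{(i)}$, deduce the local martingale property with reducing sequence $(\ttau_n)_{n \in \N}$, and obtain \eqref{EqQuadraticVarinTheo} from Theorem \ref{TheoremMartingaleEnergyMeasure}. The only cosmetic difference is that you invoke the covariation identity for $C^2_c(E_1)$-functions directly, whereas the paper re-runs the polarization with $M^{(i)}+M^{(j)}$ at the level of the coordinate martingales; both yield the same conclusion on a common defining set $\hat{\Lambda}$.
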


\begin{proof}
Recall that $b_i$ is locally $L^p(E_1,\mu)$-integrable and $(e_i, A \unitnormal) \varrho$ is locally bounded for $1 \le i \le d$. 
So according to Theorem \ref{TheoremLpIntegraldt} and Lemma \ref{LemmaLocaltimeIntegral} $N^{(i)}$, $1 \le i \le d$, define local strict CAF up to $\lifetime$ and we can find a common defining set $\Lambda$. From the construction of the parts of $N^{(i)}$ it follows that the paths of $N^{(i)}$ are locally of bounded variation up to $\lifetime$ on $\Lambda$, compare the proof of Theorem \ref{TheoremLpIntegraldt}. The definition of $M^{(i)}$, $1 \le i \le d$, yields that they are also additive on $\Lambda$.

Choose a sequence of cutoff functions $\cutoff_n$, $n \in \N$, with $\cutoff_n = 1$ on $K_{n+1}$ and $\supp[\cutoff_n] \subset \subset U_{n+2}$, $(K_n)_{n \in \N}$ and $(U_n)_{n \in \N}$ as in \eqref{EqDefUnLocaltime}.

Define $u^{(n)}_i$, $n \in \N$, $1 \le i \le d$, with $u^{(n)}_i(x) = \cutoff_n(x) x_i$.
Then $u^{(n)}_i(x) = x_i$ in a neighborhood of $U_n$ since $\cutoff_n = 1$ on $U_{n+1}$ for $n \in \N$.
So for $x \in U_n$ it holds $\pdj u^{(n)}_i(x) = \delta_{ij}$ and $\pdj \pdk u^{(n)}_i(x) = 0$ for $1 \le i,j,k \le d$.
Thus $\Ltil u^{(n)}_i (x) = b_i(x)$ for $x \in U_n$ and $n \in \N$.
So we have for all $n \in \N$, $t \ge 0$ and $1 \le i \le d$
\begin{align}
\Nu[u^{(n)}_i]_{t \minw \tau_n} = \int_0^{t \minw \tau_n} b_i(\mathbf X_s) \, ds - \int_0^{t \minw \tau_n} \varrho (e_i, A \unitnormal)(\mathbf X_s) \, d \ell_s = N^{(i)}_{t \minw \tau_n}. \label{EqNStopped}
\end{align}
Using that $\ttau_n \le \tau_n$ we get from construction of $u^{(i)}_n$ together with \eqref{EqNStopped} for all $0 \le t < \infty$ and $n \in \N$
\begin{align*}
M^{(i)}_{t \minw \ttau_n} = \mathbf X^{(i)}_{t \minw \ttau_n} - \mathbf X^{(i)}_0 - \int_0^{t \minw \ttau_n} b_i (\mathbf X_s) \, ds + \int_0^{t \minw \ttau_n} \varrho (e_i, A \unitnormal)(\mathbf X_s) \, d \ell_s = \Mu[u^{(n)}_i]_{t \minw \ttau_n}
\end{align*}
for $1 \le i \le d$.
So $(M^{(i)}_{t \minw \ttau_n})_{t \ge 0}$ is a martingale for every $n \in \N$ and $1 \le i \le d$. Thus $M^{(i)}$ is a continuous local martingale with reducing sequence $(\ttau_n)_{n \in \N}$ for $1 \le i \le d$. 

Define $A^{(i)}_t := 2 \int_0^t a_{ii} (\mathbf X_s) ds$, $1 \le i \le d$, according to Lemma \ref{TheoremLpIntegraldt}. Then $A^{(i)}$, $1 \le i \le d$, is a local strict CAF up to $\lifetime$. We have by definition of the corresponding objects and Theorem \ref{TheoremMartingaleEnergyMeasure}
\begin{align}
(M^i_{t \minw \ttau_n})^2 - A_{t \minw \ttau_n}^{(i)} = (M^{[u^{(n)}_i]})^2_{t \minw \ttau_n} - \sharpbracket{M^{[u^{(n)}_i]}}_{t \minw \ttau_n} \label{EqMitStopped}
\end{align}
for $t \ge 0$, $1 \le i \le d$ and $n \in \N$.
So $(M^i)^2 - A^{(i)}$ is a continuous local martingale with reducing sequence $(\ttau_n)_{n \in \N}$. Thus the quadratic variation of $(M^{(i)}_t)_{t \ge 0}$ is given by $A^{(i)}$ for $1 \le i \le d$.

Now let $1 \le i,j \le d$, $i \neq j$. Then $M^i + M^j$ is a continuous local martingale as well. With the same argument we get that $A^{(i,j)}$ defined by $A^{(i,j)}_t := 2 \int_0^t a_{ii}(\mathbf X_s) + a_{jj}(\mathbf X_s) + 2 a_{ij}(\mathbf X_s) \, ds$, $t \ge 0$, is the corresponding quadratic variation process. 
%
%
Choose $\hat{\Lambda} \subset \Lambda$ such that for all $\omega \in \hat{\Lambda}$ it holds $\sharpbracket{M^{i}} = A^{(i)}$ and $\sharpbracket{M^{i} + M^{j}} = \widetilde{A}^{(i,j)}$. 
Altogether, we get for $0 \le t \le \lifetime$
\begin{equation*}
\sharpbracket{ M^{(i)}, M^{(j)}}_t = \frac{1}{2} \sharpbracket{M^i + M^j,M^i + M^j}_t - \frac{1}{2} \sharpbracket{M^i}_t - \frac{1}{2} \sharpbracket{M^j}_t 
= 2 \int_0^t a_{ij}(\mathbf X_s) \, ds.
\end{equation*}\flushright\end{proof}

Now we prove existence of weak solutions. 

As before we consider the process $\mathbf M^1$ obtained as the restriction of the $\mathcal L^p$-strong Feller process $\mathbf M$ from Theorem \ref{TheoDiffProcess} to $E_1 \cup \{ \Delta \}$. Note that if $\mathbf M$ is conservative then also $\mathbf M^1$\index{conservative!$\mathbf M^1$} is conservative. Conservativity of $\mathbf M$ holds e.g. if the coefficients fulfill certain growth conditions, see \cite[Theo.~5.7.3]{FOT11}.
Observe that due to \cite[Rem.~2.5]{BG13} we really have conservativity under $\mathbb P_x$ for every starting point $x \in E_1$.
So let us now assume that $\mathbf M^1$ is conservative, then we can prove existence of weak solutions, i.e., we prove Theorem \ref{TheoExWeakSolution} from the introduction. 

\begin{proof}[proof of Theorem \ref{TheoExWeakSolution}]\label{proofTheoExWeakSolution}
Define a probability measure $\mathbb P_{\mu_0}$ on $(\mOm^{1},\mathcal F^1)$ by
\begin{align*}
\mathbb{P}_{\mu_0} \, (\cdot) := \int_{E_1} \mathbb{P}_x \, (\cdot) \, d \mu_0(x).
\end{align*}
Obviously, $\mathcal L(\mathbf X_0)= \mu_0$ under $\mathbb P_{\mu_0}$.
By construction of $\mathbf M^1$ we have that $(\mathbf X_t)_{t \ge 0}$ has continuous paths on $[0,\infty)$. Furthermore, all paths stay in $E_1 \cup \{\Delta\}$. Since $\mathbb P_{\mu_0}(\lifetime = \infty)=1$, they do not hit $\Delta$. 
Let $N^{(i)}$ and $M^{(i)}$, $1 \le i \le d$, as defined before Theorem \ref{TheoLocalMartingalePx}.~Set~$M := (M^{(1)},...,M^{(d)})$.

Let $\hat{\Lambda} \subset \mOm^{1}$ as in Theorem \ref{TheoLocalMartingalePx}. Then the definition of $N^{(i)}$, $1 \le i \le d$, implies that on $\omega \in \hat{\Lambda}$ the integrals $\int_0^t |b_i(\mathbf X_s)| ds$, $1 \le i \le d$, and $\int_0^t |g_i(\mathbf X_s)| d \ell_s$, $1 \le i \le d$, exist and 
\begin{align*}
\mathbf X_t = \mathbf X_0 + \int_0^t \left(\nabla A + A \frac{\nabla \varrho}{\varrho} \right) \,  (\mathbf X_s) \, ds - \int_0^t \varrho A \unitnormal \, \, (\mathbf X_s) d \ell_s + M_t
\end{align*}
for all $0 \le t < \infty$ since $\lifetime = \infty$ by assumption.
 
Since $\mathbb P^1_x(\hat{\Lambda}) = 1$ for every $x \in E_1$, we have $\mathbb P_{\mu_0}(\hat{\Lambda})=1$. 
Thus this equality holds $\mathbb P_{\mu_0}$-a.s.

If $(C_t)_{t \ge 0}$ is an $\mathcal F^1_t$-martingale under $\mathbb P_x$ for every $x \in E_1$, then it is also one under $\mathbb P_{\mu_0}$.
For the reducing sequence $(\ttau_n)_{n \in \N}$ of $M^{(i)}$, $1 \le i \le d$, as in Theorem \ref{TheoLocalMartingalePx} we have $\ttau_n \uparrow \infty$ $\mathbb P_x$-a.s.~for every $x \in E_1$ hence also $\mathbb P_{\mu_0}$-a.s.
So $M^{(i)}$ is again a local martingale with the same quadratic variation as in Theorem \ref{TheoLocalMartingalePx} for $1 \le i \le d$. So it is left to construct a Brownian motion $(W_t)_{t \ge 0}$ such that $M_t = \int_0^t \sqrt{2} \sigma \, (\mathbf X_s) \, d W_s$.

Note that $M^{(i)}$, $1 \le i \le d$, are continuous local martingales with 
\begin{align*}
\sharpbracket{ M^{(i)}, M^{(j)}}_t = 2 \int_0^t a_{ij}(\mathbf X_s) \, ds = \int_0^t \sqrt{2} \sigma (\sqrt{2} \sigma)^\top \, (\mathbf X_s) \, ds \ \text{for} \ t \ge 0.
\end{align*}
So we can adapt the proof of \cite[Ch.~5, Prop.~4.6]{KS91}. Starting from (4.12) therein we conclude the existence of an $r$-dimensional Brownian motion (possibly on an extension of the probability space of $\mathbf M^1$) such that\\
$\int_0^t \sigma^2_{ij}(\mathbf X_s) \, d s < \infty$ $\mathbb P_{\mu_0}$-a.s.~for $t \ge 0$, $1 \le i \le d$ and $1 \le j \le r$ and
\begin{align*}
M_t = \int_0^t \sqrt{2} \sigma \, (\mathbf X_s) \, d W_s \ \text{for} \ t \ge 0.
\end{align*}\flushright\end{proof}

\section{Stochastic dynamics for particle systems with hydrodynamic interaction} \label{SecIPS}

In this section we consider systems of $N$ particles, $N \in \N$, which interact both through hydrodynamic interaction and direct interaction via pair-potentials. We adapt the model of \cite{TP86} to describe the interaction of colloidal particles suspended in a liquid.
Let $\Om_0 \subset \R^d$, $d \in \N$, $\partial \Om_0$ locally Lipschitz smooth and $\Gamma_2 \subset \partial \Om_0$ open in $\partial \Om_0$.
Assume that $\Gamma_2$ is $C^2$-smooth and $\Gamma_2 \subset \partial \Om_0$ has zero capacity w.r.t.~to the canonical gradient Dirichlet form on $\Om_0$, i.e., the closure of $\eqref{EqGradientDirForm}$ with $A=\Id$ and $\varrho=1$.
Define $\Lambda := \Omclo_0$.

Let $\pairpot :\R^d \to \R \cup \{ \infty \}$ be a symmetric pair potential, i.e.,~$\pairpot(-x) = \pairpot(x)$ which fulfills the following conditions. 

\begin{condition}\label{CondPotential}
\begin{sequence}
\item[(i)] For $dx$-a.e.~$x \in \R^d$ it holds $|\pairpot(x)|<\infty$ and for $x \to 0$ it holds $|\pairpot(x)| \to \infty$. 
\item[(ii)] The mapping $\R^d \to \R^+_0$, $x \mapsto \exp(-\pairpot(x)) =: \varrho_0(x)$ is continuous.
\item[(iii)] The function $\varrho_0$ is weakly differentiable on $\R^d$, $\exp(-\frac{\pairpot}{2}) \in H_{\text{loc}}^{1,2}(\R^d)$. $\pairpot$ is weakly \\ differentiable on $\R^d \setminus \{0\}$ and there exists $p > \frac{N d}{2}$ such that 
\begin{align}
 \nabla \pairpot \in L^p_{\text{loc}}(\{ \varrho_0 > 0 \},\exp(-\pairpot)dx). \label{CondPotentialInt}
\end{align}
\end{sequence}
\end{condition}
These are the same assumptions as in \cite{BG13}.

Let $A : \Lambda^N \to \R^{N d \times N d}$ be a continuously differentiable matrix-valued mapping of symmetric strictly elliptic matrices.
It is convenient to write $A$ as block-matrices $A = (\Aij{i}{j})_{1 \le i,j \le N}$ with $\Aij{i}{j} : {\Lambda}^N \to \R^{d \times d}$, $1 \le i,j \le N$.
We write an element $x \in \Lambda^N$ componentwise as $x=(x^{(1)},...,x^{(N)})$ with $x^{(i)} \in \Lambda$, $1 \le i \le N$. 

We describe the dynamics of the $N$ particles by a stochastic process $(\mathbf X_t)_{t \ge 0}$ written as $\mathbf X_t = (\mathbf X_t^{(1)},...,\mathbf X^{(N)}_t)$ with $\mathbf X^{(i)}_t \in \Omclo_0$ describing the position of the $i$-th particle.
The process should solve the following SDE interpreted in the \Ito \, sense:
\begin{align}
d \mathbf X^{(k)}_t = - \beta \sum_{j=1}^N \Aij{k}{j} \, (\mathbf X_t) \big( \sum_{\substack{l=1 \\ l \neq j}}^N (\nabla \pairpot) \big(\mathbf X^{(j)}_t - \mathbf X^{(l)}_t \big) \, dt + \unitnormal_{\Gamma_2} (\mathbf X^{(j)}_t) \, d \hat{\ell}^{(j)}_t \big) \nonumber \\
+ \sum_{j=1}^N \nabla_j \Aij{k}{j} \, (\mathbf X_t) \, dt + \sum_{j=1}^N \sqrt{2} \sigma^{(k,j)} \, (\mathbf X_t) \, d W^j_t, 1 \le k \le N, \label{FormalSDE}
\end{align}
where $\unitnormal_{\Gamma_2}$ denotes the outward unit normal at $\partial \Om_0$, $\hat{\ell}^{(j)}$, $1 \le j \le N$, denotes a later to be specified functional, that grows only, when the $j$-th particle is at the boundary. By $W^{(j)}$, $1 \le j \le N$, we denote a family of independent $\R^d$-valued Brownian motions and $\sigma = (\sigma^{(k,j)})_{1 \le k,j \le N}$ denotes a family of matrix-valued mappings with $\sigma^{(k,j)} : \Lambda^N \to \R^{d \times d}$, $1 \le k,j \le N$, such that $A = \sigma \sigma^\top$.
We define $\nabla_j A^{(k,j)}$ by $(\nabla_j A^{(k,j)})_{l} = \sum_{i=1}^d \partial_{x^{(j)}_i} A^{(k,j)}_{l,i}$, $1 \le l \le d$.

Here $\beta > 0$ denotes a constant, e.g. $\beta = \frac{1}{k_B T}$ with $k_B$ being Boltzmann's constant and $T$ the absolute temperature. 
\begin{remark}
We take the SDE from \cite[(2.23)]{TP86} and add an additional boundary term, that describes a repelling force from a wall-potential at the boundary of the state space. For a Fokker-Planck description of interacting particles with wall-potential, see e.g. \cite[(2.5)]{FJ82}.
The SDE describes the random evolution of the positions of $N$ colloidal particles which are suspended in a liquid. The matrix $A$ denotes the generalized diffusion matrix.

Note that the velocity of the particles is only implicitly treated via the so-called coarse-grained drift velocity
\begin{align*}
\overline{v}^{(k)} = \beta \sum_{j=1}^N \Aij{k}{j} F_j
\end{align*}
where $F_j$ denotes the force acting on the $j$-th particle, see \cite[(2.6)]{TP86}.
The force consists both of the direct interaction with the other particles and the repelling force caused by a wall-potential at the boundary of the state space.

Through the hydrodynamic interaction, i.e., interaction mediated through the surrounding liquid, the noise driving the several particles can be correlated.


For a further discussion of the equation, the related Smoluchowski equation and their physical background, see \cite{TP86}. 
We emphasize that due to the discussion on \cite[p.~604]{TP86} this SDE has indeed to be interpreted in the \Ito \, sense in order to be related to the corresponding Smoluchowski equation from \cite[(2.4)]{TP86}.

The specific shape of the (generalized) diffusion matrix depends of course on the concrete application. If hydrodynamic interaction is absent, the matrix is up to a constant just the identity matrix. For an example with hydrodynamic interaction, see e.g. \cite[(17)]{Zw69}.
\end{remark}

To solve this SDE we apply the results of our paper. So we first need to define a suitable Dirichlet form. Define $\varrho : \R^{N d} \to \R^+_0$ by
\begin{align*}
x \mapsto \frac{1}{Z} \exp \big(- \beta \sum_{1 \le i < j \le N} \pairpot (x^{(i)} - x^{(j)}) \big), 
\end{align*}
with $Z > 0$ a constant, e.g., the partition function. For this choice of $Z$, $\varrho$ denotes the canonical ensemble distribution, see \cite[(3.4)]{TP86}.
Set $\mu := \varrho \, dx$, the measure on $\Lambda^N$ with density $\varrho$ with respect to the Lebesgue measure. Define
\begin{align}
\mathcal E(u,v) := \int_{\Lambda^N} (A \nabla u, \nabla v) \, d \mu, \label{EqDirFormIPS}
\end{align}
\begin{align*}
\mathcal D := \{ u \in C_c(\Lambda^N) \, | \, u \in H^{1,1}_{\text{loc}}(\Om^N_0), \, \mathcal E(u,u) < \infty \}.
\end{align*}
Denote by $(\mathcal E,D(\mathcal E))$ the corresponding closure in $L^2(\Lambda^N, \mu)$. As in \cite{BG13} we identify a suitable state space for the $N$-particles process.

Set $\Lambda_s = \Om_0 \cup \Gamma_2$, we define the set of all \textit{admissible configurations} $\Lambda^N_{ad}$ by
\begin{multline*}
\hspace{17pt}\Lambda^N_{ad} := \{ \varrho > 0 \} \cap \{ (\xib{1},...,\xib{N}) \in \Lambda^N_s \ | \ \xib{k} \neq \xib{l} \ \text{for} \  k \neq l, 1 \le k,l \le N, \\
\text{there exists at most one} \ i \ \text{such that} \, \xib{i} \in \Gamma_2 \} \hspace{17pt}
\end{multline*}

So in an admissible configuration all particles are in $\Om_0$ or at the smooth boundary part $\Gamma_2$. Moreover, there are never two or more particles at the same place. Additionally, we exclude the case that two or more particles are at the boundary. This exclusion has to be done for technical reason, since the boundary of the configuration space is in general not smooth if two particles are located at the boundary. From \cite[Lem.~3.3]{BG13} we get that the boundary part $\Lambda^N_{\text{ad}} \cap \partial (\Lambda^N)$ is $C^2$-smooth. Furthermore, we can apply similar arguments as therein to conclude that $\Lambda^N_{\text{ad}}$ is complemented by set of zero capacity, see the proof of \cite[Lem.~3.3]{BG13} and \cite[Appendix A]{BG13}.

The Dirichlet form and the corresponding coefficients fulfill the Conditions \ref{Cond1Matrix}, \ref{CondContinuity}, \ref{CondDiffDensity} and \ref{Cond1DiffBoundary}. Let us assume from now on that the Dirichlet form is conservative.

So we may apply our previous results and obtain an $\mathcal L^p$-strong Feller diffusion process $(\mathbf X_t)_{t \ge 0}$ with values in $\Lambda^N_{\text{ad}} \cup \{ \Delta \}$. From Theorem \ref{TheoLocalMartingalePx} we get a Skorokhod decomposition. We write the corresponding functional $\mN$ as $\mN = (\mN^{(1)},...,\mN^{(N)})$ with $\mN^{(k)} : \mathbf \Om^1 \times [0,\infty) \to \R^d$, $1 \le k \le N$. Similarly we write $M = (M^{(1)},...,M^{(N)})$.
Let us first identify $\mN$. We may write the coefficients $b_i$, $1 \le i \le N d$ from \eqref{EqNt} in the following form. Let $b = (b^{(1)},...,b^{(N)})$, $b^{(k)} = (b^{(k)}_1,...,b^{(k)}_d)$ the drift term corresponding to the $k$-th particle, $1 \le k \le N$. Then
\begin{align*}
b^{(k)}_i (x) = \sum_{j=1}^{N} \sum_{l=1}^{d} \partial_{x^{(j)}_l} \Aij{k}{j}_{i,l}(x) + \sum_{j=1}^{N} \sum_{l=1}^{d} \big( \frac{1}{\varrho} \Aij{k}{j}_{i,l} \partial_{x^{(j)}_l} \varrho \big) (x).
\end{align*}
So
\begin{align*}
b^{(k)}(x) = \sum_{j=1}^{N} \nabla_j \Aij{k}{j} \, (x) + \sum_{j=1}^N \big( \Aij{k}{j} \frac{\nabla_j \varrho}{\varrho} \big) \, (x)
\end{align*}
with $\nabla_j \Aij{k}{j}$ defined after \eqref{FormalSDE} and $\nabla_j \varrho := (\partial_{x^{(j)}_1} \varrho,...,\partial_{x^{(j)}_d} \varrho)$.
The definition of $\varrho$ yields
\begin{align*}
\frac{\nabla_j \varrho}{\varrho} \, (x) = - \beta \sum_{\substack{l=1 \\ l \neq j}}^N (\nabla \pairpot) ( x^{(j)} - x^{(l)}).
\end{align*}
Thus
\begin{align*}
b^{(k)}(x) = \sum_{j=1}^{N} \nabla_j \Aij{k}{j} \, (x) - \beta \sum_{j=1}^N \Aij{k}{j} \, (x) \sum_{\substack{l=1 \\ l \neq j}}^N (\nabla \pairpot) ( x^{(j)} - x^{(l)}).
\end{align*}
Altogether we get for $1 \le k \le N$
\begin{align*}
\mN^{(k)}_t = \int_0^t \sum_{j=1}^{N} \nabla_j \Aij{k}{j} \, (\mathbf X_s) - \beta \sum_{j=1}^N \Aij{k}{j} \, (\mathbf X_s) \sum_{\substack{l=1 \\ l \neq j}}^N (\nabla \pairpot) ( \mathbf X^{(j)}_s - \mathbf X^{(l)}_s)  d s \\
- \int_0^t \sum_{j=1}^N (\Aij{k}{j} \unitnormal^{(j)} \varrho) \ (\mathbf X_s)  d \ell_s.
\end{align*}
Here $\unitnormal = (\unitnormal^{(1)},...,\unitnormal^{(N)})$ denotes the outward unit normal at $\Lambda^N_{\text{ad}} \cap \partial (\Lambda^N)$.
We rewrite the local time $\ell$ of the process $(\mathbf X_t)_{t \ge 0}$ into local times corresponding to the visits of the several particles. 

Note that
\begin{align*}
\Lambda^N_{\text{ad}} \cap \partial(\Lambda^N) \cap \{ \varrho > 0 \} = \{ \varrho > 0 \} \cap \bigcup_{k=1}^N \Om^{k-1}_0 \times \Gamma_2 \times \Om^{N-k}_0.
\end{align*}
Define $T^{(k)} := \Om^{k-1}_0 \times \Gamma_2 \times \Om^{N-k}_0$, $1 \le k \le N$. For $x = (x^{(1)},...,x^{(N)}) \in T^{(k)}$ we have for the outward unit normal $\unitnormal = (0,...,0,\unitnormal_{\Gamma_2}(x^{(k)}),0,...,0)$, i.e., in the $k$-th coordinate we have the outward unit normal of the boundary of the state space of the $k$-th particle.
Set $\hat{\ell}^{(k)} := \frac{1}{\beta} \varrho 1_{T^{(k)}} \cdot \ell$, $1 \le k \le N$. Then $(\hat{\ell}^{(k)})_{t \ge 0}$ grows only when the $k$-th particle is at the boundary. Furthermore, $\varrho \cdot \ell = \beta \sum_{k=1}^N \hat{\ell}^{(k)}$.
Thus for $1 \le k \le N$,
\begin{align*}
\int_0^t \sum_{j=1}^N \big(\Aij{k}{j} \unitnormal^{(j)} \varrho \big) \, (\mathbf X_s)  d \ell_s = \beta \sum_{j=1}^N \int_0^t \big(\Aij{k}{j} \unitnormal_{\Gamma_2} \big)(\mathbf X^{(j)}_s) \, d \hat{\ell}^{(j)}.
\end{align*}

For the martingale part $M$ we get for $1 \le i,j \le d$ and $1 \le k,l \le N$
\begin{align*}
\langle M^{(k)}_i, M^{(l)}_j \rangle_{t \minw \lifetime} = 2 \, \int_0^{t \minw \lifetime} \Aij{k}{l}_{i,j} \, (\mathbf X_s) \, ds. 
\end{align*}

Assuming that $(\mathbf X_t)_{t \ge 0}$ is conservative we can apply Theorem \ref{TheoExWeakSolution} to conclude existence of a weak solution. So for a given initial distribution $\mu_0 \in \mathcal P(\Lambda^N_{\text{ad}})$ we have that $(\mathbf X_t)_{t \ge 0}$ fulfills $\mathbb P_{\mu_0}$ almost surely with $1 \le k \le N$:
\begin{multline*}
\hspace{15pt}\mathbf X^{(k)}_t = \mathbf X^{(k)}_0 
+ \int_0^t \sum_{j=1}^{N} \nabla_j \Aij{k}{j} \, (\mathbf X_s) 
- \beta \sum_{j=1}^N \Aij{k}{j} \, (\mathbf X_s) \sum_{\substack{l=1 \\ l \neq j}}^N (\nabla \pairpot) ( \mathbf X^{(j)}_s - \mathbf X^{(l)}_s)  d s \\
- \beta \sum_{j=1}^N \int_0^t \Aij{k}{j} \unitnormal_{\Gamma_2}(\mathbf X^{(j)}_s) \, d \hat{\ell}^{(j)}_s 
+ \int_0^t \sqrt{2} \sum_{j=1}^N \sigma^{(k,j)} (\mathbf X_s) \, d W^{(j)}_s.\hspace{17pt} 
\end{multline*}

Furthermore, the process stays in the state space $\Lambda^N_{ad}$. So summarizing we have constructed a stochastic process describing the dynamics of interacting particles with hydrodynamic and direct interaction.

\section*{Acknowledgement(s)}
We thank Benedikt Heinrich for helpful and enriching discussions. Furthermore, we thank two unknown referees for valueable suggestions on the paper.


\begin{thebibliography}{12}



\bibitem{AD75}
R.~A. Adams.
\newblock {\em {Sobolev spaces.}}
\newblock {Pure and Applied Mathematics, 65. A Series of Monographs and
  Textbooks. New York-San Francisco-London: Academic Press, Inc., a subsidiary
  of Harcourt Brace Jovanovich}, 1975.

\bibitem{AKR03}
{S.~Albeverio, Y.~Kondratiev and M.~R{\"o}ckner}.
\newblock {\em {Strong Feller properties for distorted Brownian motion and
  applications to finite particle systems with singular interactions.}}
\newblock {Finite and infinite dimensional
  analysis in honor of Leonard Gross, volume 317 of \textit{Contemporary Mathematics}. 
  Amer. Math. Soc., Providence, RI, 2003.}

\bibitem{BB08}
R.~F. Bass and K.~Burdzy.
\newblock {On pathwise uniqueness for reflecting Brownian motion in
  $C^{1+\gamma}$ domains.}
\newblock {\em {Ann. Probab.}}, 36(6):2311--2331, 2008.

\bibitem{BaHs00}
R.~F. Bass and E.~P. Hsu.
\newblock {Pathwise uniqueness for reflecting Brownian motion in Euclidean
  domains.}
\newblock {\em Probab. Theory Relat. Fields}, 117(2):183--200, 2000.

\bibitem{BaHs90}
R.~F. Bass and P.~Hsu.
\newblock {The semimartingale structure of reflecting Brownian motion.}
\newblock {\em Proc. Am. Math. Soc.}, 108(4):1007--1010, 1990.

\bibitem{BaHs91}
R.~F. Bass and P.~Hsu.
\newblock {Some potential theory for reflecting Brownian motion in H\"older and
  Lipschitz domains.}
\newblock {\em Ann. Probab.}, 19(2):486--508, 1991.

\bibitem{Ba14}
B.~Baur.
\newblock {Elliptic boundary value problems and construction of $L^p$-strong Feller processes with singular drift and reflection. }
\newblock {\em Wiesbaden: Springer Spektrum; Kaiserslautern: TU Kaiserslautern (Diss. 2013)}, 2014. 

\bibitem{BG13}
B.~Baur and M.~Grothaus.
\newblock Construction and strong feller property of distorted elliptic
  diffusion with reflecting boundary.
\newblock {\em Potential Analysis}, 40(4):391--425, 2014.

\bibitem{BGS13}
B.~Baur, M.~Grothaus, and P.~Stilgenbauer.
\newblock {Construction of $\mathcal L^p$-strong Feller Processes via Dirichlet
  Forms and Applications to Elliptic Diffusions}.
\newblock {\em Potential Analysis}, 38(4):1233--1258, 2013.

\bibitem{BG68}
R.~M. Blumenthal and R.K. Getoor.
\newblock {\em {Markov processes and potential theory.}}
\newblock {Pure and Applied Mathematics, 29. A Series of Monographs and
  Textbooks. New York-London: Academic Press.}, 1968.

\bibitem{BBC05}
R.~F. Bass,~K.~Burdzy and Z.-Q. Chen.
\newblock {Uniqueness for reflecting Brownian motion in Lip domains.}
\newblock {\em {Ann. Inst. Henri Poincar\'e, Probab. Stat.}}, 41(2):197--235,
  2005.


\bibitem{Che93}
Z.-Q. Chen.
\newblock {On reflecting diffusion processes and Skorokhod decompositions.}
\newblock {\em Probab. Theory Relat. Fields}, 94(3):281--315, 1993.

\bibitem{DI93}
P. Dupuis and H. Ishii.
\newblock {SDEs with oblique reflection on nonsmooth domains.}
\newblock {\em {Ann. Probab.}}, 21(1):554--580, 1993.

\bibitem{FG07}
T.~Fattler and M.~Grothaus.
\newblock {Strong Feller properties for distorted Brownian motion with
  reflecting boundary condition and an application to continuous $N$-particle
  systems with singular interactions.}
\newblock {\em J. Funct. Anal.}, 246(2):217--241, 2007.


\bibitem{FJ82}
B.~U. Felderhof and R.~B. Jones.
\newblock {Linear response theory of sedimentation and diffusion in a
  suspension of spherical particles.}
\newblock {\em Physica A}, 119:591--608, 1983.

\bibitem{FOT94}
M.~Fukushima, Y.~Oshima, and M.~Takeda.
\newblock {\em {Dirichlet forms and symmetric Markov processes. 2nd revised and
  extended ed.}}
\newblock {de Gruyter Studies in Mathematics. Berlin: Walter de Gruyter}, 1994.

\bibitem{FOT11}
M.~Fukushima, Y.~Oshima, and M.~Takeda.
\newblock {\em {Dirichlet forms and symmetric Markov processes. 2nd revised and
  extended ed.}}
\newblock {de Gruyter Studies in Mathematics. Berlin: Walter de Gruyter},
  2011.

\bibitem{FT95}
M.~Fukushima and M.~Tomisaki.
\newblock {Reflecting diffusions on Lipschitz domains with cusps -- analytic
  construction and Skorohod representation.}
\newblock {\em Potential Anal.}, 4(4):377--408, 1995.

\bibitem{FT96}
M.~Fukushima and M.~Tomisaki.
\newblock {Construction and decomposition of reflecting diffusions on Lipschitz
  domains with H\"older cusps.}
\newblock {\em Probab. Theory Relat. Fields}, 106(4):521--557, 1996.

\bibitem{Hoh94}
R.~H\"ohnle.
\newblock {Construction of local solutions to SDE's with singular drift.}
\newblock {\em {Stochastics Stochastics Rep.}}, 47(3-4):163--192, 1994.

\bibitem{Hoh96}
R.~H\"ohnle.
\newblock {On global existence of solutions of SDE's with singular drift.}
\newblock {\em {Math. Nachr.}}, 179:145--160, 1996.

\bibitem{KS91}
I.~Karatzas and S.~E. Shreve.
\newblock {\em {Brownian motion and stochastic calculus. 2nd ed.}}
\newblock {Graduate Texts in Mathematics. New York etc.: Springer-Verlag}, 1991.


\bibitem{KrRo05}
{N.~V. Krylov and M.~R\"ockner}
\newblock {Strong solutions of stochastic equations with singular time
  dependent drift.}
\newblock {\em Probab. Theory Relat. Fields}, 131(2):154--196, 2005.

\bibitem{LS84}
P.-L. Lions and A.S. Sznitman.
\newblock {Stochastic differential equations with reflecting boundary
  conditions.}
\newblock {\em Commun. Pure Appl. Math.}, 37:511--537, 1984.

\bibitem{MR92}
{ Z.~Ma and M.~R{\"o}ckner}
\newblock {\em {Introduction to the theory of (non-symmetric) Dirichlet
  forms.}}
\newblock {Universitext. Berlin: Springer-Verlag}, 1992.


\bibitem{PaWi94}
E.~Pardoux and R.J. Williams.
\newblock {Symmetric reflected diffusions.}
\newblock {\em Ann. Inst. Henri Poincar\'e, Probab. Stat.}, 30(1):13--62, 1994.

\bibitem{Sai87}
Y.~Saisho.
\newblock {Stochastic differential equations for multi-dimensional domain with
  reflecting boundary.}
\newblock {\em Probab. Theory Relat. Fields}, 74:455--477, 1987.

\bibitem{SV71}
D.~W. Stroock and S.R.S. Varadhan.
\newblock {Diffusion processes with boundary conditions.}
\newblock {\em Commun. Pure Appl. Math.}, 24:147--225, 1971.


\bibitem{ST14}
J.~Shin and G.~Trutnau.
\newblock {On the stochastic regularity of distorted Brownian motions.}
\newblock{\em Trans. Amer. Math. Soc.}, https://doi.org/10.1090/tran/6887, 2016.


\bibitem{Tan79}
H.~Tanaka.
\newblock {Stochastic differential equations with reflecting boundary condition
  in convex regions.}
\newblock {\em Hiroshima Math. J.}, 9:163--177, 1979.

\bibitem{TP86}
R.J.A. Tough, P.N. Pusey, H.N.W. Lekkerkerker, and C.~Van Den~Broeck.
\newblock Stochastic descriptions of the dynamics of interacting brownian
  particles.
\newblock {\em Molecular Physics}, 59(3):595--619, 1986.

\bibitem{Tru03}
G. Trutnau.
\newblock {Skorokhod decomposition of reflected diffusions on bounded Lipschitz
  domains with singular non-reflection part.}
\newblock {\em Probab. Theory Relat. Fields}, 127(4):455--495, 2003.

\bibitem{WiZh90}
R.~J. Williams and W.~A. Zheng.
\newblock {On reflecting Brownian motion - a weak convergence approach.}
\newblock {\em Ann. Inst. Henri Poincar\'e, Probab. Stat.}, 26(3):461--488,
  1990.

\bibitem{Zw69}
R.~Zwanzig.
\newblock Langevin theory of polymer dynamics in dilute solution.
\newblock {\em Stochastic Processes in Chemical Physics}, 50:325--331, 1969.

\end{thebibliography}
\end{document}